%% file: numberof_Kdegree_partialresults.tex
\documentclass[options]{amsart}
 \usepackage{thm-restate}
 
\makeatletter
\@ifundefined{newcounteralias}{}{%
  \renewcommand\thmt@autorefsetup{%
    \@xa\def\csname\thmt@envname autorefname
      \@xa\endcsname\@xa{\thmt@thmname}%
  }%
}
\makeatother

\usepackage{latexsym,ifthen,amssymb}
\usepackage[toc,page,title,titletoc,header]{appendix}
\usepackage{enumerate}
\usepackage{graphicx}
\usepackage{bm}
\usepackage{subfigure}
\usepackage{float}

\usepackage{tikz-cd}
\usepackage{caption}

\usepackage{multirow}
\usepackage{mathrsfs}

\usepackage{amsthm}
\usepackage{hyperref}
\usepackage{cleveref}
\usepackage{bm}
\newtheorem{theorem}{Theorem}[section]
\newtheorem{lemma}[theorem]{Lemma}
\newtheorem{claim}[theorem]{Claim}
\newtheorem{question}[theorem]{Question}
\newtheorem{proposition}[theorem]{Proposition}

\numberwithin{equation}{section}
\theoremstyle{remark}
\newtheorem{remark}[theorem]{Remark}

\Crefname{question}{Question}{Question}
 \crefname{claim}{claim}{claims}
\Crefname{claim}{Claim}{Claims}

\input{marcro.tex}

\def\Y{Y}
\def\D{\Delta}
\title{Cardinality of the K-cones below a non infinitely often $K$-trivial}

 \author{George Barmpalias    }
 \address{State Key Lab of Computer Science, Institute of Software, Chinese Academy of Sciences, Beijing, China. }
\email{barmpalias@gmail.com }

\author{Lu Liu}
\address{School of Mathematics and Statistics,
Central South University,
City Changsha, Hunan Province,
China. 410083}
\thanks{Lu Liu is supported by  NSFC grant 12471002.}
\email{g.jiayi.liu@gmail.com}
\subjclass[2010]{Primary  03D80; Secondary 68Q30 03D32}
\keywords{Computability theory, Algorithmic randomness theory,  Kolmogorov complexity, $K$-degree,
$K$-trivial}

\begin{document}
\maketitle

\begin{abstract}

Given a real $x\in 2^\omega$, how many reals are there 
that are simpler (or equally simple) to describe than $x$.
With ``simpler to describe" characterized by $K$-complexity 
of initial segments (hence we call such $y$ with $y\leq_K x$ a solution
to $x$), if $x$ is infinitely often $K$-trivial
(hence i.o. $K$-trivial),
then a simple counting argument shows that the number of solutions to $x$
is countable. 
If $x$ is non  i.o. $K$-trivial, then must it admit uncountably 
many solutions?
Motivated by this question, we show that:
\begin{enumerate}
\item  There is a non i.o. $K$-trivial $x$ such that most of its solution
is i.o. $K$-trivial. i.e., Only countably many solution to $x$
 are non i.o. $K$-trivial. 
 \item There is a minimal pair of K-degree
 (meaning their common solutions are all $K$-trivial) 
 so that each of the pair is non i.o. $K$-trivial,
 generalizing a result of Csima and Montalb\'an
 (where the pair is not required to be non i.o. $K$-trivial but non $K$-trivial).

\item There is a non i.o. $K$-trivial $x$ whose $\Delta_K$ complexity grows arbitrarily slow
(in a weak sense: $\inf_{m\geq n}\Delta_K(x\uhr m)\leq h(n)$ with $h$
a given
unbounded and nondecreasing function) such that $x$ admits uncountably many solutions.

\item An ample excess lemma holds for non i.o. $K$-trivial. Namely:
$$
\sum_n 2^{-(K(x\uhr n)- \log_2 n)}<\infty.
$$ for all non i.o. $K$-trivial $x$. Actually, this even holds
for non i.o. $b$-$K$-trivial with $b$
only depends on the universal prefix free machine.
The same argument also gives a constant complexity return phenomenon for $K$-trivial
where this constant only depends on the universal prefix free machine.
\end{enumerate}

\end{abstract}

\input{numberof_Kdegree_introduction}

\input{smallnessofsolutions}

\input{arbitraryslowgrowth}

\input{constantdropofKtrivial}

\input{acknowledgement}

\end{document}

%% file: marcro.tex
\def\mcal{\mathcal}
\def\h{\hat}
\def\t{\tilde}

\def\m{m}

\def\uhr{\upharpoonright}

%% file: numberof_Kdegree_introduction.tex
\section{Introduction}

Given a real $x$, how many reals $y$ are simpler than $x$?
We measure simplicity by the lengths of shortest descriptions of
initial segments. For a fixed optimal prefix-free machine $U$, let
$K(\sigma)$ be the length of a shortest program producing the finite
binary string $\sigma$. This approach to complexity originates in
the work of Solomonoff and Kolmogorov; the prefix-free theory was
developed by Levin and Chaitin
\cite{Solomonoff64,Kolmogorov65,Chaitin75}.
Changing the optimal machine changes $K$ by at most an additive
constant. Thus, for a real $x\in 2^\omega$, the function
$n\mapsto K(x\uhr n)$, where $x\uhr n$ consists of its first $n$
bits, gives a machine-independent comparison of descriptive
complexity up to such constants.

Following Downey, Hirschfeldt and LaForte \cite{DHL04}, write
$y\leq_K x$ if $$K(y\uhr n)\leq K(x\uhr n)+O(1)$$ for all $n$.
 Our question concerns the
cardinality of the lower $K$-cone $$\{y:y\leq_K x\}.$$
Mutual reducibility defines $=_K$, and its equivalence classes
are the $K$-degrees. This comparison retains information about
every initial-segment length, including fluctuations that an
asymptotic rate of complexity would lose.

The least possible complexity is $K(n)+O(1)$: the length $n$
can be recovered from any string of length $n$, and a computable
real has initial segments describable from their lengths. A real
$x$ is \emph{$K$-trivial} if $$K(x\uhr n)\leq K(n)+O(1)$$ for all
$n$. Solovay showed that such reals need not be computable
\cite{DHNS03}. They form the least $K$-degree, so no real is
strictly simpler than a $K$-trivial real in this ordering.
Their descriptive simplicity also entails computational weakness.
For example, every $K$-trivial real is jump-traceable: the values
of a universal partial function computable from it can be covered
by uniformly c.e. finite sets whose sizes have a computable bound
\cite{Nies05,BDG09}.

There are only countably many $K$-trivial reals. The counting
behind this fact applies more generally. Uniformly in $n$, there
are $O(2^d)$ strings $\sigma$ of length $n$ with
$K(\sigma)\leq K(n)+d$ \cite{MY11}.
Suppose that $K(x\uhr n)\leq K(n)+b$ at infinitely many lengths,
for a fixed $b$; such an $x$ is called \emph{infinitely often
$K$-trivial}, or \emph{i.o.\ $K$-trivial}.
For each fixed comparison constant $c$, the initial segments of
all $y$ satisfying $K(y\uhr n)\leq K(x\uhr n)+c$ have boundedly
many possibilities at these common lengths. Hence there are only
finitely many such $y$. Taking the union over $c$ gives a
countable lower cone \cite{BV11}. We study the converse question,
conjectured by Barmpalias \cite[Section~3.5]{Barmpalias13}:
\begin{question}\label{numKques}
Does every non i.o.\ $K$-trivial real $x$ have uncountably many
reals $y\leq_K x$?
\end{question}
\noindent Let $\Delta_K(\rho):= K(\rho)-K(|\rho|)$,
 The hypothesis says precisely that $$\Delta_K(x\uhr n)\to\infty.$$

The difficulty for \Cref{numKques} is that initial-segment complexity functions cannot
be chosen freely. For example, $K(z\uhr n)=n+O(1)$ is
impossible by the ample excess lemma of Miller and Yu \cite{MY08}.
Also, if $z_0\oplus z_1$ is Martin-L\"of random, the two reals
have no common $K$-upper bound \cite{MY08}; even the pointwise
maximum of two realized functions need not be realizable.
For the lower $K$-complexity zone,
Csima and Montalb\'an \cite{CsimaMontalban2006}
constructed a nondecreasing unbounded function $h$ such that
$K(y\uhr n)\leq K(n)+h(n)+O(1)$ for every $n$ forces $y$ to be
$K$-trivial. Thus the numerical bound $K(n)+h(n)$ rises
unboundedly above $K(n)$, yet admits only countably many reals
below it. It cannot itself equal $K(x\uhr n)+O(1)$ for any real
$x$. Our question therefore depends on the upper bound being
realized by a real, as well as on its distance above $K(n)$.
There are nevertheless broad realization theorems. A consequence
of Staiger's results \cite[Theorem~3, Proposition~1 and Remark~1]{Staiger12}
is that, if $f:\omega\to\mathbb R$ satisfies
\[
  0<a\leq f(n+1)-f(n)\leq b<1
\]
for some constants $a,b$ and every $n$, then
$K(z\uhr n)=f(n)+O(1)$ for some real $z$.
No computability assumption on $f$ is required. At the endpoints,
restrictions reappear. 
Recently Liu
\cite{Liu26} proved that     every
real has a strictly more complex real above it
(i.e., There is no maximal $K$-degree).

Regarding \Cref{numKques}, for lower cones, a general sufficient condition due to Baartse and Barmpalias \cite{BB13}
   is: $\inf_{m\geq n}\Delta_K(x\uhr m)$ dominates 
some $\emptyset'$-computable nondecreasing unbounded function.
Under this condition the lower cone contains a perfect set.
This reaches far below Martin-L\"of randomness, which requires
$$K(x\uhr n)\geq n-O(1),$$ but leaves a gap between this sufficient
condition and divergence above $K(n)$ alone.

In contrast to the ``no maximal $K$-degree" result,
one can even ask if there is a minimal non zero $K$-degree \cite[Question~8]{Barmpalias13}.
That is, a $K$-degree so that only $K$-trivial reals lies strictly below it.
This is not even known for non i.o. $K$-trivial reals.  
 This is a structural question
distinct from counting the whole lower cone, which also includes
all reals of the same $K$-degree as $x$.

\subsection{Main results}

Fix a universal prefix free machine $U$
and denote $K_U$ by $K$.
For    $x,y\in 2^\omega\cup 2^{<\omega}$
with identical length
and $c\in\omega$,
we write $y\leq_K^c x$
iff for every $n\leq |x|$, 
$$K(y\uhr n)\leq K(x\uhr n)+c.$$
In which case,
we   say $y$ is a $c$-\emph{solution} to $x$;
we say $y$ is a solution to $x$ iff it is a $c$-solution to $x$ for some $c\in\omega$
(that is, $y\leq_K x$);
let $\Y_c(x)$ denote the set of $c$-solution to $x$.

Our first result is a non-free-choice result for $K$ complexity function.
Which says that there is some non i.o. $K$-trivial real, 
while most of reals K-below it are i.o. $K$-trivial.
The proof of this result allows the 
 non i.o. $K$-trivial to have uncountable lower K-cones
 (see \Cref{numKrem2}).
\begin{restatable}{proposition}{numKpropfinitenoniosolution}
\label{numKpropfinitenoniosolution}
There exists a non i.o. $K$-trivial $x\in 2^\omega$ such that 
for every $c\in\omega$, there are finitely many $c$-solutions
 of $x$ that is non i.o. $K$-trivial.
 Thus,
there are countably many reals $y$ with $y\leq_K x$ such that 
$y$ is  non i.o. $K$-trivial. 

\end{restatable}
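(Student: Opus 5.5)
The plan is to build $x$ by finite extension, along a very sparse sequence of lengths $n_0<n_1<\cdots$. Between consecutive $n_k$ we keep $x$ complex. At each $n_k$ we let $x$ drop almost to the $K$-trivial level, so that
\[
\Delta_K(x\uhr n_k)\leq g(k),
\]
where $g$ is a slowly growing, nondecreasing, unbounded function fixed in advance. At all other lengths $\Delta_K(x\uhr n)$ should stay at least of order $g(k)$, which makes $x$ non i.o.\ $K$-trivial.

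The canonical way to obtain the drop is padding. Take $x\uhr n_k=x\uhr m_k\,0^{\,n_k-m_k}$ with $n_k$ chosen huge and $K$-random relative to the finite data already fixed. Then
\[
K(x\uhr n_k)\leq K(n_k)+K(x\uhr m_k)+O(1).
\]
Choose the prefix $x\uhr m_k$ so that this excess is controlled by $g(k)$. For the non i.o.\ $K$-triviality, note that away from the lengths $n_k$ the string $x\uhr n$ carries the incompressible information of the block $x\uhr m_k$. By the counting fact quoted in the introduction (there are $O(2^d)$ strings $\sigma$ of length $n$ with $K(\sigma)\leq K(n)+d$), a prefix of that information of length $d$ cannot sit below $K(n)+d-O(1)$. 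Hence $\Delta_K(x\uhr n)\to\infty$.

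Next comes the key counting step, for a fixed $c$. Any $c$-solution $y$ satisfies
\[
K(y\uhr n_k)\leq K(n_k)+g(k)+c,
\]
so $y\uhr n_k$ lies in a set $S_k^c$ of at most $O(2^{g(k)+c})$ strings. Since $g$ is unbounded, this alone gives no finiteness. Finiteness must come from diagonalization, using a priority ordering of pairs $(c,\sigma)$. At stage $k$ we look at the finitely many strings $\sigma\in S_{k}^{c}$ for $c\leq k$, which can be enumerated relative to $\emptyset'$ and suitable guesses. The choice of the next block of $x$ is made so that every branch through such a $\sigma$ that remains a $c$-solution on $(n_k,n_{k+1}]$ is forced to be ``padded'' in the same way. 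Concretely, on the next padding interval the bound $K(x\uhr n_{k+1})\leq K(n_{k+1})+g(k+1)$ only allows $y\uhr n_{k+1}$ to be of the form $\tau 0^{n_{k+1}-|\tau|}$ with $\tau$ short, apart from boundedly many exceptions. Iterating this, a surviving $c$-solution either has $\Delta_K(y\uhr n_j)$ bounded along a subsequence, and so is i.o.\ $K$-trivial, or it belongs to a fixed finite family. That family consists of branches that copy $x$'s non-padded information, and their number is bounded in terms of $c$ alone, not $k$. The aim is to show that the ``exceptional'' non-padded branches at level $k$ already number at most $2^{c+O(1)}$ and extend uniquely.

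The main obstacle is exactly this last point. The set $S_k^c$ grows with $k$, so finiteness of non i.o.\ $K$-trivial $c$-solutions cannot come from the width of the solution tree at a single level. It must come from showing that all but $O(2^{c})$ branches are forced to revisit the trivial level infinitely often. My first attempt is to choose $n_{k+1}$ so large that $K(n_{k+1})$ dominates all $K$-values at level $n_k$. Then for a $c$-solution $y$ to avoid being near-trivial at $n_{k+1}$, the excess $K(y\uhr n_{k+1})-K(n_{k+1})$ must be paid for by information shared with $x\uhr m_{k+1}$. Since $x\uhr m_{k+1}$ is chosen random relative to everything earlier, only $2^{c+O(1)}$ strings can share that information within constant $c$. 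If this symmetry-of-information estimate fails, I would weaken the goal and let $g$ grow so slowly, for instance as the inverse of a fast $\emptyset''$-computable function, that each branch counted in $S_k^c$ can be diagonalized individually.
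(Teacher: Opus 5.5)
Your proposal has a genuine gap, and it is the step you flag as the main obstacle: nothing in it bounds the non i.o.\ $K$-trivial $c$-solutions by a quantity depending only on $c$, and the mechanisms you sketch do not supply such a bound.

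\begin{itemize}
\item The claim that, apart from boundedly many exceptions, a $c$-solution must have $y\uhr n_{k+1}=\tau 0^{n_{k+1}-|\tau|}$ with $\tau$ short is unsupported. The strings of length $n_{k+1}$ with $\Delta_K\le g(k+1)+c$ number up to $O(2^{g(k+1)+c})$. They include strings of quite different shapes, for example initial segments of $K$-trivial reals with constant below $g(k+1)+c$, or $0^{n_{k+1}-|\tau|}\tau$ with $|\tau|$ somewhat below $g(k+1)+c$. You give no argument that all but $O(2^c)$ of them extend only to i.o.\ $K$-trivial reals.
\item Having $\Delta_K(y\uhr n_j)\le g(j)+c$ along the $n_j$ is not i.o.\ $K$-triviality, which needs a fixed bound.
\item The symmetry-of-information idea misreads the relation. $y\le_K^c x$ is only the pointwise bound $K(y\uhr n)\le K(x\uhr n)+c$, so $y$ need not share any information with $x$. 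Making $x\uhr m_{k+1}$ random relative to earlier data therefore does not confine the solutions to $2^{c+O(1)}$ strings. At a level $n$ the only available bound is $O(2^{\Delta_K(x\uhr n)+c})$, which is unbounded precisely because $x$ is non i.o.\ $K$-trivial.
\item The fallback of diagonalizing branches individually faces the same problem. A solution can be removed only by making $x$ drop below it at some level. Each drop of $x$ to $\Delta_K\approx g(k)$ still admits $O(2^{g(k)+c})$ solution strings at that level, so some further mechanism must cut this down to $O(2^c)$.
\item The padding estimate itself also needs repair. Once $x\uhr m_k$ contains the earlier padded block, it essentially encodes $n_{k-1}$. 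So $K(x\uhr m_k)$ is far above a slowly growing $g(k)$ fixed in advance, and $n_k$ would have to encode $x\uhr m_k$ rather than be random relative to it.
\end{itemize}

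The idea you are missing is a way to divide out the factor $2^{g(k)}$. That requires choosing among many candidate stems rather than fixing one padded extension. The paper works with conditions $(\rho,b)$: the closed set of $x\succeq\rho$ with $\Delta_K(x\uhr n)\ge b$ for all $n\ge|\rho|$. It then splits $\Y_c(x)$ into two parts.

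First, common $c$-solutions of all members of a nonempty condition are $K$-trivial (\Cref{numKlemcommonsolutionisKtrivial}). Hence they satisfy $\Delta_K\le O_U(1)$ infinitely often (\Cref{numKpropconstantreturnofKtrivial}), so a solution that keeps returning to these common-solution sets is i.o.\ $K$-trivial.

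For the remaining solutions, the paper proceeds as follows:
\begin{enumerate}
\item It makes the condition $c$-maximal.
\item It uses maximality and compactness (\Cref{numKlem5}) to get $2^b$ stems at one level whose solution sets outside $[\Y_c(p)\uhr n]^\preceq$ are pairwise disjoint.
\item It lowers all $2^b$ stems simultaneously to $\Delta_K\le b+O_U(1)$ at a common level (\Cref{numKsimultaneousdrop}).
\end{enumerate}
Their solutions then total $O_U(2^{b+c})$. By pigeonhole, one stem has only $O_U(2^c)$ new solutions, a bound independent of $b$ (\Cref{numKlem6}). Taking that stem at every stage is what makes the count depend on $c$ alone, and your construction has no counterpart to it.
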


In particular, the $K$-degree of this $x$ (and all its 
non i.o. $K$-trivial solution) is countable.
Countable non i.o.\ $K$-trivial degrees already occur among
Martin-L\"of random reals \cite{Miller09}.
 The question about the whole cone remains unresolved.

Csima and Montalb\'an \cite{CsimaMontalban2006}
proved that there are two non $K$-trivial reals $x_0,x_1$
such that every common solution to $x_0,x_1$ is $K$-trivial.
The following result strengthens
their result by allowing 
the two reals to be non i.o. $K$-trivial.

\begin{restatable}{proposition}{numKpropintersectionoftwo}
\label{numKpropintersectionoftwo}

There are two non i.o. $K$-trivial  $x_0,x_1$
such that for every $c\in\omega$, $\Y_c(x_0)\cap \Y_c(x_1)$ is a finite
set of $K$-trivial.
Thus, there are countably many reals $y$ such that $y\leq_K x_0,x_1$.
\end{restatable}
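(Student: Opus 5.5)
We plan to build $x_0,x_1$ by a finite-extension construction, relative to $\emptyset'$, in which the two reals alternate in being \emph{locally simple}. The guiding idea is the counting argument that shows i.o.\ $K$-trivial reals have countable lower cones. If on some interval of lengths $I$ the complexity $K(x_0\uhr n)$ sits only $d$ above $K(n)$, then any $c$-solution to $x_0$ has at most $O(2^{d+c})$ possibilities at each $n\in I$. We will interleave such windows for $x_0$ and $x_1$ with growing $d$, in such a way that a common $c$-solution is forced to have $\Delta_K$ bounded at infinitely many lengths. We then upgrade this to $K$-triviality.

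Concretely, fix an order $h$ in the style of Csima and Montalb\'an, that is, a nondecreasing unbounded $h$ such that $K(y\uhr n)\le K(n)+h(n)+O(1)$ for all $n$ forces $y$ to be $K$-trivial. A priori we also need a finite-to-one version of this: for each $c$ there are only finitely many such $y$ with constant $c$.

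The construction proceeds in stages $s$, with increasing lengths $n_0<n_1<\dots$ and a target bound $d_s\to\infty$ with $d_s\le h(n_s)$. At stage $s$ we extend the string whose turn it is, say $x_0$, from length $n_s$ to $n_{s+1}$. We copy a computable, or very simple, continuation, so that $K(x_0\uhr m)\le K(m)+h(m)$ on the new block. To keep $x_0$ non i.o.\ $K$-trivial, we first append a short random block of length about $d_s$. This keeps $\Delta_K(x_0\uhr m)\ge d_s-O(1)$ for all $m\ge n_s$, while $\Delta_K$ stays below $h$ in the sense of a lower bound of order $d_s$. Meanwhile we extend $x_1$ with a genuinely complex continuation, for instance by copying bits of a Martin-L\"of random real, so that $K(x_1\uhr m)$ is large on that block. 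On the next block the roles swap.

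The key bookkeeping is to choose the blocks so that every length $m$ lies in a block where one of the two reals satisfies $K(x_i\uhr m)\le K(m)+h(m)+O(1)$. A common $c$-solution $y$ then satisfies $K(y\uhr m)\le K(m)+h(m)+c+O(1)$ for all $m$. By the Csima--Montalb\'an property $y$ is $K$-trivial, and finiteness for each $c$ follows from the uniform counting bound of \cite{MY11}.

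The main obstacle is to keep $\Delta_K(x_i\uhr m)\to\infty$ uniformly while still having $\Delta_K(x_i\uhr m)\le h(m)$ on that real's simple blocks. This is delicate because $h$ grows extremely slowly and $K(x_i\uhr m)$ cannot be controlled exactly on a block following a complex block: the simple tail after a complex block inherits the complexity of the prefix. We would address this by choosing $n_{s+1}$ so large, and the simple continuation so compressible, that $K(x_i\uhr m)\le K(m)+K(x_i\uhr n_s)-\text{(gain)}$. That is, we use the fact that for lengths $m$ far beyond $n_s$, with $m$ chosen to encode $x_i\uhr n_s$ (as in the Csima--Montalb\'an coding of prefixes into the length), we get $K(x_i\uhr m)\le K(m)+O(d_s)$. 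The $\emptyset'$-oracle lets us find such $m$ and verify the bounds, and $d_s\le h(n_s)$ is arranged by delaying stages.

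If this coding-into-the-length step fails, the fallback is to weaken the target. We would then aim only to force $y$ to be i.o.\ $K$-trivial with bounded constant on the windows, and to obtain $K$-triviality by making the windows cover all lengths.
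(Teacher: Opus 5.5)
Your central requirement cannot be met. You need every length $m$ to lie in a block where $\Delta_K(x_i\uhr m)\le h(m)+O(1)$ for some $i<2$, with $h$ a Csima--Montalb\'an gap function. By \Cref{numKlemlowerbound} (see (\ref{numKeq82})), every non i.o.\ $K$-trivial $x$ satisfies $K(x\uhr 2^k)\ge k+K(k)-O_x(1)$. Since $K(2^k)=K(k)+O(1)$, this gives $\Delta_K(x\uhr 2^k)\ge k-O_x(1)$. The reason is that if $K(x\uhr 2^k)\le k+K(k)+O(1)$, one can pass from a description of $x\uhr 2^k$ to a prefix $x\uhr m$ with $m\in[2^{k-1},2^k)$ and $K(m)\ge k+K(k)-O(1)$. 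That prefix then has $\Delta_K\le O_U(1)$.

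Apply this to both $x_0$ and $x_1$ at $m=2^k$. Your covering requirement then forces $h(2^k)\ge k-O(1)$ for all large $k$, so $h(n)\ge \log_2 n-O(1)$ by monotonicity. No such $h$ is a gap function. The uncountably many reals that vanish outside the positions $2^{2^i}$ satisfy $K(y\uhr n)\le K(n)+\log_2\log_2 n+O(1)\le K(n)+h(n)+O(1)$, yet there are only countably many $K$-trivials. This is also an instance of the Baartse--Barmpalias result quoted in the paper. The Csima--Montalb\'an alternation works for merely non-$K$-trivial reals because they may dip to low $\Delta_K$ at such ``round'' lengths; non i.o.\ $K$-triviality forbids exactly this. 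Your fallback keeps the same covering requirement, so it fails for the same reason.

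The same phenomenon breaks your local step. A random block of length $d_s$ followed by a computable continuation does not keep $\Delta_K(x_0\uhr m)\ge d_s-O(1)$. If the prefix $\sigma$ is followed by zeros, then for all large $k$ we have $K(x_0\uhr 2^k)\le K(\sigma)+K(k)+O(1)\le k+K(k)$. \Cref{numKlemlowerbound} then yields $\Delta_K(x_0\uhr m)\le O_U(1)$ for some $m\in[2^{k-1},2^k)$ inside the zero tail; this is the constant-return effect of \Cref{numKpropconstantreturn2}.

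Moreover, knowing that all common $c$-solutions are $K$-trivial would not by itself make $\Y_c(x_0)\cap\Y_c(x_1)$ finite. You postulate the ``finite-to-one'' gap property needed for that but never supply it.

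The paper never tries to bound a common solution at every length. It works with conditions $(\rho,b)$ and proves in \Cref{numKlemcommonsolutionisKtrivial} that any common $c$-solution of all members of a nonempty condition is $K$-trivial. That proof uses a Kraft-function argument together with the Solovay-function characterization of $K$-triviality. Finiteness comes from $|\Y_c(\rho,b)|\le O_U(2^{b+c})$, which follows from \Cref{numKsimultaneousdrop}. Finally, $c$-maximality and \Cref{numKlem5} kill every new common solution along the construction, so the common $c$-solutions of the two limit reals are exactly $\Y_c(\t p_c)\cap\Y_c(\h p_c)$.
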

 
Baartse and Barmpalias \cite{BB13}
shows that for every $\Delta_2^0$,
nondecreasing unbounded function $h$,
there are uncountably many reals $y$
whose $\Delta_K$ complexity function is dominated 
by $h$.
The following proposition says that 
there is a non i.o. $K$-trivial whose $\Delta_K$ complexity
grows arbitrary slow (in a weak sense)  that
admits uncountably many solutions.

\begin{restatable}{proposition}{numKpropslowgrowth}
\label{numKpropslowgrowth}
For any sufficiently large
(depending on the universal prefix free machine) $c\in\omega$,
any non decreasing unbounded function $h\in\omega^\omega$,
there exists a non i.o. $K$-trivial $x\in 2^\omega$
with its $\Delta_K$ complexity controlled by $h$
in the sense: $\inf_{m\geq n} \Delta_K(x\uhr m)\leq h(n)$
for all but finitely many $n\in\omega$,
such that $\Y_c(x)$ is uncountable.

\end{restatable}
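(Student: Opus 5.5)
We build $x$ together with a perfect tree $T$ of $c$-solutions, in stages, so that three requirements hold at once. First, $\Delta_K(x\uhr m)\to\infty$, so $x$ is non i.o. $K$-trivial. Second, $\Delta_K(x\uhr m)$ dips back down to a level bounded by $h(n)$ infinitely often, so that $\inf_{m\ge n}\Delta_K(x\uhr m)\le h(n)$ for almost all $n$. Third, every path $y$ through $T$ satisfies $K(y\uhr m)\le K(x\uhr m)+c$ for all $m$.

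The rough idea is to let $x$ consist of long blocks of a single ``level'' $k$. On such a block $x$ looks like a computable sequence padded with $k$ bits of extra information, so $\Delta_K\approx k$ throughout. The level $k$ is increased only at lengths $n_k$ chosen large enough that $h(n_k)\ge k+O(1)$. At the block boundaries the paths of $T$ are allowed to split.

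\textbf{Step 1: one splitting step.} Fix a string $\sigma$ (the current initial segment of $x$) and a finite set $S$ of strings of length $|\sigma|$ with $S\le_K^c\sigma$. We want to extend $\sigma$ to $\sigma'$ and each $\tau\in S$ to two incomparable extensions in $S'$ with $S'\le_K^c\sigma'$, while $\Delta_K(\sigma'\uhr m)\ge k$ on the new part. The plan is to choose $\sigma'=\sigma\rho$ where $\rho$ encodes a fresh random-looking block of $k$ bits $r$ followed by $0$'s. Each $\tau\in S$ is extended by $\tau r0\cdots$ and $\tau r'0\cdots$, where $r'$ differs from $r$ only in one bit.

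The complexity of $\tau r'0^{j}\uhr m$ is then at most the complexity of $\sigma r0^j\uhr m$ plus the cost of describing $\tau$ from $\sigma$ plus one flipped bit. This is where we need $c$ large, depending only on $U$. We cannot afford to pay $K(\tau\mid\sigma)$ at every stage, since it would accumulate.

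\textbf{Step 2: avoiding accumulation.} To avoid this, all solutions should be generated by a single uniform machine. It receives a program for $x\uhr m$ and outputs $y\uhr m$ by applying a fixed computable transformation, determined by a branch choice encoded inside the program. The intended fix is to make each branch choice free: the decision bit is written into $x$ itself, so that $x$ already ``knows'' both options and $y$ simply copies $x$ with that bit flipped.

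Concretely, the plan is to define a machine $M$ via the recursion theorem. Given $U$-programs for strings of the form $x\uhr m$, $M$ outputs the images under the maps $\phi_s$ that flip the designated bits. Because the flip positions are computable from $m$ and the stage structure, this gives $K(y\uhr m)\le K(x\uhr m)+c_M$, uniformly along every branch. Here $c$ is the coding constant of $M$, which is why $c$ must be sufficiently large. Since the flips are arbitrary along each branch, we get $2^{\aleph_0}$ paths, so $\Y_c(x)$ is uncountable.

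\textbf{Step 3: the $\Delta_K$ bounds.} For the lower bound we choose the $k$-bit block to have $K(r\mid n_k, \sigma)\ge k$. This gives $K(x\uhr m)\ge K(m)+k-O(1)$ for $m$ in block $k$. The subtle point is that one must also make the levels increase, so that earlier information is never lost. For the upper bound we must check that $x$ at the start of block $k$ has $\Delta_K\le k+O(\text{previous})$. To make this work, the previous blocks are chosen so that their total information is at most $k$, for example by taking $r_k$ to extend $r_{k-1}$ by one bit. With that choice $\Delta_K(x\uhr m)\approx k$ on block $k$, and $n_k$ grows so that $h(n)\ge k$ there.

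\textbf{Main obstacle.} I expect the main obstacle to be the lower bound $\Delta_K(x\uhr m)\ge k-O(1)$ across all $m$ in a block. The trouble is the long runs of zeros: $K(x\uhr m)$ must exceed $K(m)+k$, but $m$ itself may carry information about $r$. I would handle this by choosing $r_k$ random relative to $\emptyset'$ restricted to a finite amount of information, using the counting bound of Miller–Yu cited in the introduction to show that only boundedly many strings of length $m$ have $\Delta_K\le k$. A measure argument then picks $r_k$ avoiding all of them.
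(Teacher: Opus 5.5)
There is a genuine gap, and it breaks the design itself: the target of your Step 3, $\Delta_K(x\uhr m)\approx k$ on a long block where $x$ is $\pi_k r_k$ followed by zeros, cannot be met. Let $D$ be a shortest program for $\pi_k r_k$, which is all the data fixing $x$ on block $k$. You only raise the level when $h$ permits, and $h$ may grow more slowly than the inverse of any iterated exponential. So infinitely many blocks have length exceeding $2^{|D|}$. Such a block contains a length $m$ whose low-order binary digits spell out $D$. From $m$ alone a fixed machine recovers $D$, hence $\pi_k r_k$, hence $x\uhr m=\pi_k r_k0^{m-|\pi_k r_k|}$. Thus $K(x\uhr m)\le K(m)+O(1)$ in infinitely many blocks, and $x$ is i.o.\ $K$-trivial however $r_k$ was chosen.

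Your measure repair fails for the same reason. At a single length only $O(2^a)$ of the $2^k$ candidates have $\Delta_K\le a$. But the bad length depends on $r_k$, and the union bound over a block of length $L$ is of order $L2^{a-k}$, which is useless once $L$ exceeds $2^{k-a}$. Randomness of $r_k$ relative to $\emptyset'$ does not change this.

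More generally, no pointwise bound $\Delta_K(x\uhr n)\le h(n)+O(1)$ is possible for a non i.o.\ $K$-trivial $x$ once $h(N)\le\log_2 N$ infinitely often. In that case $K(x\uhr 2^n)\le n+K(n)+O(1)$ infinitely often, and \Cref{numKlemlowerbound} then forces $\Delta_K(x\uhr m)\le O_U(1)$ for infinitely many $m$. This is why the statement only asks for $\inf_{m\ge n}\Delta_K(x\uhr m)\le h(n)$. The real $x$ must be allowed to be highly complex between sparse dips, and your plan never uses that slack.

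Step 2 has a second gap. A fixed machine $M$ turns a program for $x\uhr m$ into one string. To output $y\uhr m$ it must also be told which designated positions below $m$ this branch flips. That is $j$ bits after $j$ splittings, so you only get $K(y\uhr m)\le K(x\uhr m)+j+O(1)$. Writing decision bits into $x$ cannot help, since the branches differ exactly in those choices. Indeed, the strings $y\uhr m$ with $y\in\Y_c(x)$ number $O(2^{\Delta_K(x\uhr m)+c})$. So the branching must be paid for by the excess complexity of $x$, not by a coding constant.

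The paper resolves both issues with conditions $(\rho,b)$.
\begin{itemize}
\item Conditions have positive measure, so between dips $x$ follows $1$-random reals, and increasing thresholds give non i.o.\ $K$-triviality.
\item All solutions at finite stages are eventually-zero, hence $K$-trivial.
\item A new branch $z$ flips one bit of such a $y$ at $m=|\sigma|$, where $\sigma$ is a prefix of a $1$-random $x_0$ in the condition. For $n\ge m$ this gives $K(z\uhr n)\le K(n)+K(m)+O_y(1)<n/2\le K(x_0\uhr n)$ (\Cref{numKclaimpositiveconditiona}), so the flip cost is absorbed by the randomness of $x$.
\item The threshold is then raised above the triviality constant $a_z$ of $z$. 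Beyond the new stem, $z$ is automatically a $c$-solution, so the constant never accumulates.
\item The dips to $\Delta_K\le 3b$ come from padding with zeros inside intervals where $z$ has $\Delta_K=O_U(1)$ (\Cref{numKclaimpositiveconditionb}). This is exactly where $c$ must be large depending on $U$.
\end{itemize}
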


Miller and Yu \cite{MY08} 
discovered the famous ample excess lemma, which says that
$$
\sum_n2^{-(K(x\uhr n)-n)}<\infty
$$
for all 1-random real $x$.
Jason Rute  (see \cite[slide~17]{Simpson14} for more details)
generalized ample excess lemma
 which says that if real $x$ is strongly $f$-random,
 then $$\sum_n 2^{-(K(x\uhr n)-f(x\uhr n))}<\infty.$$
Here $f:2^{<\omega}\to\mathbb R$ is computable.
We have an analog of ample excess lemma for 
non i.o. $K$-trivial. Actually, it even holds
for real that is not i.o. $K$-trivial witnessed by 
a $O_U(1)$ constant.
For a real $x\in 2^\omega$ and $a\in \omega$, we say $x$ is 
\emph{i.o. $a$-$K$-trivial}
iff for infinitely many $n\in\omega$, $\Delta_K(x\uhr n)\leq a$.

\begin{restatable}{theorem}{numKampleexcess}
\label{numKampleexcess}
Let $b\in\omega$ be sufficiently large depending on $U$.
For every non i.o. $b$-$K$-trivial real $x$,
we have
$$
\sum_{n\geq 1} 2^{-(K(x\uhr n)-\log_2 n)}<\infty.
$$
In particular, this holds for all non i.o. $K$-trivial.
\end{restatable}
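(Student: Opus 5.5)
The plan is to argue by contraposition with a Kraft--Chaitin (KC) construction in the style of the Miller--Yu ample excess lemma. We assume $\sum_n n\,2^{-K(x\uhr n)}=\infty$ and show that $\Delta_K(x\uhr j)\leq b$ for infinitely many $j$, where $b$ depends only on $U$. The idea is that a large excess on a dyadic block of lengths can be cashed in for short descriptions of prefixes of $x$. Those prefixes then have complexity within a constant of the complexity of their lengths.

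\textbf{Step 1: localise to dyadic blocks.} Split $\omega$ into blocks $I_k=[2^k,2^{k+1})$. On $I_k$, the quantity $n\,2^{-K(x\uhr n)}$ is within a factor $2$ of $2^{k}2^{-K(x\uhr n)}$. For a string $\sigma$ of length $2^{k+1}$, define the c.e.\ (left-approximable) block excess
\[
E_k(\sigma)=\sum_{n\in I_k}2^{k}\,2^{-K(\sigma\uhr n)}.
\]
Divergence of the sum along $x$ means $\sum_k E_k(x\uhr 2^{k+1})=\infty$.

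\textbf{Step 2: the KC requests.} We enumerate a KC set as follows. Whenever the current approximation to $E_k(\sigma)$ crosses a new integer threshold $t$, we issue, for every $j\in I_k$, a request $\bigl(K_s(j)+r,\ \sigma\uhr j\bigr)$, where the offset $r$ is chosen so that the block's requests cost about $2^{-r}$ times the increment of $E_k(\sigma)$ divided by $2^k$. The charging argument rests on $\sum_{j\in I_k}2^{-K(j)}\le 1$ together with the counting fact quoted in the introduction: uniformly in $n$, there are $O(2^d)$ strings $\tau$ of length $n$ with $K(\tau)\le K(n)+d$, equivalently $\sum_{|\tau|=n}2^{-K(\tau)}=O(2^{-K(n)})$.

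The bookkeeping I intend is to charge each threshold crossing for $\sigma$ to the weight $\sum_{n\in I_k}2^{-K(\sigma\uhr n)}$. Summing over all $\sigma$ of length $2^{k+1}$, this weight is at most $\sum_{n\in I_k}\sum_{|\tau|=n}2^{-K(\tau)}\,2^{2^{k+1}-n}$. This is too large if counted naively. So requests are issued only for prefixes $\tau$ of length $n$, never for all extensions $\sigma$, and $E_k$ is indexed by the prefix tree on $I_k$. The total weight is then bounded by $\sum_k\sum_{n\in I_k}\sum_{|\tau|=n}2^{-K(\tau)}$ times a factor $2^{k}2^{-K(n)}$ correction. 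By the counting fact, this is $O(\sum_k\sum_{n\in I_k}2^{-K(n)})=O(1)$.

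\textbf{Step 3: conclude.} If the total weight is finite, KC yields a machine $M$. Along $x$ infinitely many thresholds are crossed, since the block excesses are unbounded or infinitely often at least $1$. For each such crossing, $K(x\uhr j)\le K(j)+r+c_M$ for every $j\in I_k$. Thus $\Delta_K(x\uhr j)\le b:=r+c_M$ infinitely often, contradicting non i.o.\ $b$-$K$-triviality. The constant $b$ depends only on $U$, through the counting constant and the KC coding constant. This gives the theorem, and hence in particular the statement for non i.o.\ $K$-trivial $x$.

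\textbf{Main obstacle.} The hard part is Step 2: making the total KC weight finite. The naive scheme pays $\sum_{j\in I_k}2^{-K(j)}$ per crossing per string, and there are too many strings. I would first try to charge each crossing directly to the $2^{-K(\tau)}$ mass of the prefixes $\tau=\sigma\uhr n$ that caused it. The bound should then come from using $n\approx 2^k$ to offset exactly one factor of $2^{-K(n)}\ge 2^{-k-2\log k}$. If that loses a logarithmic factor, I would refine the blocks, or replace $2^{-K(j)}$ in the requests by $2^{-K(j)}$ restricted to the single $j\in I_k$ that maximises the local contribution. That requires only one short description per crossing, which still suffices for infinitely many $j$.
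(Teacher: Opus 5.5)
\textbf{There is a genuine gap.} Step 2 is the whole proof, and the scheme you describe cannot be made to balance.

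\textbf{The thresholds are on the wrong scale.} Divergence of $\sum_k E_k(x\uhr 2^{k+1})$ gives neither unboundedness nor $E_k\geq 1$ infinitely often. Since $\sum_k 2^{-K(k)}<\infty$, all it gives is $E_k\geq C\,2^{-K(k)}$ infinitely often for each $C$. That level is far below $1$, so Step 3 does not follow.

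\textbf{Once the threshold is corrected to $C\,2^{-K(k)}$, the cost breaks.} A request $(K_s(j)+r,\sigma\uhr j)$ for every $j\in I_k$ costs $2^{-r}\sum_{j\in I_k}2^{-K(j)}=\Theta(2^{-K(k)-r})$ per triggering string. A unit increment of $E_k$, however, represents only $2^{-k}$ of weight. Matching the two as you propose forces an offset of about $k-K(k)$, so the resulting bound on $\Delta_K$ is not a constant. With a fixed $r$ the total diverges. Every $\sigma=\tau 0^{2^{k+1}-|\tau|}$ with $|\tau|=k-3\log_2 k$ has $E_k(\sigma)\gtrsim k\,2^{-K(k)}$, and there are $2^k/k^3$ of them. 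So block $k$ alone costs about $2^{k-K(k)-r}/k^3\to\infty$.

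\textbf{Your fallback does not help.} Consider a single request at the $j$ maximising $2^{-K(\sigma\uhr j)}$. For $j\in I_k$, $\sigma\uhr 2^k$ is computable from $\sigma\uhr j$, so $j=2^k$ is a maximiser up to a constant factor. Its request again costs $2^{-K(k)-r-O(1)}$. The price of a request is governed by $K(j)$, not by the local contribution.

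\textbf{What is missing are the two ideas the paper rests on.}

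(1) On strings $\rho$ of length $2^k$, the map $\rho\mapsto\sum_{\sigma\succeq\rho,\ 2^k\leq|\sigma|<2^{k+1}}2^{-K(\sigma)}$ is a uniformly left-c.e.\ discrete semimeasure. Hence $\sum_{m\in I_k}2^{-K(x\uhr m)}\leq O(2^{-K(x\uhr 2^k)})$. This collapses the block to the single string $x\uhr 2^k$: $E_k\geq C\,2^{-K(k)}$ forces $K(x\uhr 2^k)\leq k+K(k)-\log_2 C+O(1)$. Only $O(2^k)$ strings of length $2^k$ satisfy this. This is the count your Step 2 could not obtain over strings of length $2^{k+1}$.

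(2) Each such string gets one request, at length $k+K(k)$, for a prefix $\sigma\uhr m$ with $m\in[2^{k-1},2^k)$ and $K(m)\geq k+K(k)-O(1)$. A positive fraction of these $m$ qualify, so the strings can be spread out with each $m$ serving only $O(1)$ of them. The block then costs $O(2^k)\cdot 2^{-k-K(k)}=O(2^{-K(k)})$. Reassignments forced when $K$ later compresses $m$ (or $k$) are paid for by $K$'s own weight.

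This is the paper's lemma: if $K(\sigma)\leq n+K(n)+O_U(1)$ with $|\sigma|\in[2^n,2^{n+1})$, then $\Delta_K(\sigma\uhr m)\leq O_U(1)$ for some $m\geq 2^{n-1}$. With it, no contraposition through divergence is needed. Non i.o.\ $b$-$K$-triviality gives $K(x\uhr 2^k)\geq k+K(k)-O_U(1)$ for almost all $k$. By (1), block $k$ then contributes $O(2^{-K(k)})$, which is summable.
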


The rest of this paper will prove the above mentioned main results.
We prove \Cref{numKpropfinitenoniosolution}
and \Cref{numKpropintersectionoftwo} in \Cref{numKsecsmall};
\Cref{numKpropslowgrowth} in   \Cref{numKsecslowgrowth};
and \Cref{numKampleexcess} in   \Cref{numKsecampleexcess}.
During these proof, we delay the proof of technical results
(usually these are specific properties concerning $K$ complexity function)
to \Cref{numKtechnicalresults}.
We end up this section by giving notations.

\subsection{Summary of notations.}
\ \\

 \noindent\textbf{Strings.}
 For $\sigma,\rho\in 2^{<\omega}\cup 2^\omega$, 
we write $\sigma\succeq\rho$ if $\sigma$ extends $\rho$;
 write $\sigma\uhr n$ for the string $\sigma(0)\cdots\sigma(n-1)$ 
 (if $n>|\sigma|$, then $\sigma\uhr n:= \sigma$);
write $|\sigma|$ for the length of $\sigma$
let $[\sigma]^\preceq $ be the set of strings extending $\sigma$;
let $[\sigma] := \{x\in 2^\omega:x\succeq\sigma\}$;
we  write $\sigma\rho$
or $\sigma\cdot\rho$  for the concatenation of $\sigma$ and $\rho$.
For a set $A\subseteq 2^{<\omega}$, let $[A]^\preceq := \cup_{\sigma\in A}[\sigma]^\preceq$;
and  $[A]: = \cup_{\sigma\in A}[\sigma]$.

\noindent\textbf{Measure.}
For a set $Y\subseteq 2^\omega$, let $\m(Y)$ denote 
the Lebesgue measure of $Y$.
For a set $A\subseteq 2^{<\omega}$, the measure of $A$,
denoted as $\m(A)$, refers to
the Lebesgue measure of $[A]$;
for $\sigma\in 2^{<\omega}$, we write $\m(\sigma)$ for $\m(\{\sigma\})$;
we write $\m(Y|X)$ for $\m(Y\cap X)/\m(X)$.

\noindent\textbf{Big O small o}.
 We write $O(1)$ for  absolute constant;
 write $O_v(1)$ for a constant depending on object $v$.
For real variables $u,v,w$,
 we write $v\leq O_u(w)$ iff $v\leq O_u(1)\cdot w$
 (for all $u,v,w$).
 We say:
for every $v$  \emph{sufficiently large depending on $u$},
$\psi(v,u)$ holds
iff there is a function $h$ such that
for every $u$ and every $v\geq h(u)$,
$\psi(v,u)$ holds.

\noindent\textbf{K-complexity and others.}
Let $\Delta_K(\rho):= K(\rho)-K(|\rho|)$.
For $a\in\omega$, let
  $$
 \D_{\leq a}:= \{\sigma\in 2^{<\omega}: \Delta_K(\sigma)\leq a\},
 $$
For a set $X\subseteq 2^\omega\cup 2^{<\omega}$, let $\Y_c(X) $ denote the set of 
common $c$-solutions to elements of $X$.
i.e., 
$$
\Y_c(X):= \{y\in 2^\omega\cup 2^{<\omega}: y\in \Y_c(x)\text{ for all }x\in X\}.
$$


%% file: smallnessofsolutions.tex
\section{Smallness of the set of solutions}
\label{numKsecsmall}
Our target in this section is \Cref{numKpropfinitenoniosolution}
and   \Cref{numKpropintersectionoftwo}.
We firstly note that to construct a non i.o. $K$-trivial, it 
is sufficient and necessary to consider the following type of condition.
A \emph{condition} $p$ is a pair $(\rho,b)$
such that 
$\rho\in 2^{<\omega}$ and $b\in \omega$.
A condition is seen as a set of candidates of the reals $x$ we are constructing.
i.e., Those reals whose $\Delta_K$ complexity is above $b$ over level $|\rho|$:
$$
\big\{x\in [\rho] : \Delta_K(x\uhr n)\geq b\text{ for all }n\geq |\rho|\big\}.
$$
This set is denoted as $[\rho,b]$ (or $[p]$), which is clearly a closed set
and so it give rise to a pruned tree (rooted at $\rho$).
This tree 
 is denoted by   $(\rho,b)$ (or $p$).
 
 The reason
we consider   conditions is obvious, every non i.o. $K$-trivial $x$ must be produced by
a sequence of conditions $(\rho_s,b_s)_{s\in\omega}$ (where $\rho_s\prec\rho_{s+1}$ and 
$b_s<b_{s+1}$) in the sense that $x = \cup_s \rho_s$.
 
We refer to $b$ as the \emph{threshold} (of  $p$) and 
$\rho$ as the \emph{stem} (of   $p$).
We write $\Y_c(\rho,b)$ (or $\Y_c(p)$) for $\Y_c([\rho,b])$.
A condition $p$ being \emph{nonempty} means $[p]\ne\emptyset$.
A condition $(\h\rho,\h b)$ \emph{extends} a condition $(\rho,b)$
(also written as $(\h\rho,\h b)\leq (\rho,b)$) iff
$\h\rho\in (\rho,b)$ and $\h b\geq b$.
We have the following existence of condition extension fact.
\begin{restatable}{claim}{existenceofextension}\label{numKclaimext}
Let   condition $p=(\rho,b)$  so that $b$ is sufficiently large (depending on $U$). 
\begin{enumerate}
\item We have $$\m([p]|\rho)> 1-2^{-(\Delta_K(\rho)-2b-2\log_2 b)}.$$
Thus, 
if, in addition, $\Delta_K(\rho)\geq 3b $, then $m([p])>0$.
\item If  $p$ is nonempty, then $\m([p])>0$.
\item For every $\h b\geq b$, every $n\in\omega$,
there exists a nonempty extension $(\h\rho,\h b)$
of $(\rho,b)$ such that $|\h\rho|>n$.

 \end{enumerate}
\end{restatable}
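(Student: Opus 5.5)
The plan is to prove part (1) by a semimeasure (coding) argument, then derive (2) and (3) from it.

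\textbf{Part (1).} For a condition $(\rho,b)$, let
\[
r_b(\rho):=\m\Big(\bigcup\{[\tau]:\tau\succeq\rho,\ \Delta_K(\tau)<b\}\ \Big|\ \rho\Big),
\]
so that $\m([p]|\rho)=1-r_b(\rho)$. The goal is to show that a large relative measure of bad extensions makes $\rho$ compressible. Concretely, I want a left-c.e.\ discrete semimeasure $\nu$ with
\[
\nu(\rho)\geq 2^{-K(|\rho|)-2b-2\log_2 b-O(1)}\, r_b(\rho).
\]
The coding theorem then gives $K(\rho)\leq K(|\rho|)+2b+2\log_2 b-\log_2 r_b(\rho)+O(1)$. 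This rearranges to $r_b(\rho)\leq 2^{-(\Delta_K(\rho)-2b-2\log_2 b)+O(1)}$, and for $b$ large depending on $U$ the $O(1)$ is absorbed.

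To build $\nu$, fix $b$ and work at each length $k$. The budget $2^{-K(k)}$ (approximated from above via $K_s$) is spread over strings $\rho$ of length $k$. The mass given to $\rho$ is proportional to $2^{-b}$ times the relative measure of extensions $\tau\succeq\rho$ currently seen to satisfy $K_s(\tau)<K_s(|\tau|)+b$.

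The total mass at level $k$ is controlled by the counting fact quoted in the introduction: there are $O(2^b)$ strings of length $n$ with $K(\sigma)\leq K(n)+b$. I expect to need a second factor $2^{b}$ to handle strings that look bad only at stages where $K_s(|\tau|)$ has not yet settled. This extra factor is why the bound has $2b$ rather than $b$.

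Finally I sum over $b$ with weights $b^{-2}$, which produces the $2\log_2 b$ term and makes the construction uniform. The consequence ``$\Delta_K(\rho)\geq 3b\Rightarrow\m([p])>0$'' is then immediate, because $b-2\log_2 b>0$ for large $b$.

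\textbf{Part (2).} Take $x\in[p]$. If $\Delta_K(x\uhr n)\geq 3b$ for some $n\geq|\rho|$, then $(x\uhr n,b)$ is a condition with $[x\uhr n,b]\subseteq[p]$, and part (1) gives positive measure.

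Otherwise $K(x\uhr n)\leq K(n)+3b$ for all $n\geq|\rho|$, so $x$ is $K$-trivial. In this case I need to leave $x$ through a branch that keeps $\Delta_K\geq b$ while $\Delta_K$ climbs to $3b$. The approach is:
\begin{itemize}
\item choose $n$ large with $x\uhr n$ on the tree;
\item consider extensions $x\uhr n\cdot z$ with $z$ a long segment of a Martin-L\"of random real;
\item show that the total measure of $z$ producing some intermediate prefix with $\Delta_K<b$ is less than the measure of $[x\uhr n]$, by the same semimeasure estimate applied along $x$.
\end{itemize}
The risk is a dip below $b$ at the junction. I would try to use a Miller--Yu type excess bound to control this. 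Extensions of a random real eventually have $\Delta_K$ tending to infinity, so some such extension reaches $\Delta_K\geq 3b$.

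\textbf{Part (3).} By (2), $\m([p])>0$. By Lebesgue density and the fact that the random reals have measure one, $[p]$ contains a Martin-L\"of random real $z$. For random $z$, $\Delta_K(z\uhr m)\to\infty$. So there is $N>n$ such that $\Delta_K(z\uhr m)\geq\h b$ for all $m\geq N$. Setting $\h\rho=z\uhr N$, which lies in $(\rho,b)$, the condition $(\h\rho,\h b)$ extends $(\rho,b)$, is nonempty since $z\in[\h\rho,\h b]$, and has $|\h\rho|>n$.

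\textbf{Main obstacle.} I expect two places to be hardest. The first is the uniformity in (1): $\Delta_K$ is not c.e., so the semimeasure must be charged only for stage-approximated badness, paying the extra $2^{b}$. The second is the $K$-trivial case of (2).
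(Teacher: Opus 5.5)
\textbf{Part (2), the $K$-trivial case.} This is a genuine gap. The escape you sketch cannot be carried out with the tools you name. Part (1) gives nothing at a prefix with $\Delta_K<2b+2\log_2 b$. The $O_U(2^b)$ bound on strings of each length with $\Delta_K<b$ is fully compatible with every one-bit departure from $x$ being bad. So while $\Delta_K(x\uhr n)$ hovers in $[b,3b)$, nothing you have forces an exit that stays at or above $b$. A Miller--Yu excess bound does not help either: it controls sums along a random real, not the first bits after a $K$-trivial prefix.

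The missing idea is that this case is vacuous. By \Cref{numKpropconstantreturn2}, every $K$-trivial real has $\Delta_K(x\uhr n)\le O_U(1)$ for infinitely many $n$, with the constant depending only on $U$ and not on $x$. So once $b$ is large, no $K$-trivial real lies in $[p]$. The paper argues contrapositively. If $\m([p])=0$, then $\m([x\uhr n,b])=0$ for every $x\in[p]$ and $n\ge|\rho|$, so (1) forces $\Delta_K(x\uhr n)<2b+2\log_2 b$ throughout. Thus every $x\in[p]$ is $K$-trivial, hence drops below $b$, hence $[p]=\emptyset$. This uniform constant return is a substantive input (it comes from the Kraft-function argument of \Cref{numKlemlowerbound}), and your proof needs it or an equivalent.

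\textbf{Part (1).} Your construction does not work as described. Badness, $K(\tau)<K(|\tau|)+b$, is only $\Delta^0_2$, and a left-c.e.\ $\nu$ cannot withdraw mass. The strings of length $n$ that are ever seen bad include those with $K(\tau)<K_s(n)+b$ for early, too-large values $K_s(n)$, and there can be up to order $2^{K_s(n)-K(n)+b}$ of them. So the overcount is not a fixed factor $2^b$, and your bound on the mass at level $k$ does not follow.

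No badness test is needed. Let $\sigma\succeq\rho$ be shortest with $\Delta_K(\sigma)<b$, and let $d=|\sigma|-|\rho|$. Counting gives $\m([\rho]^\preceq\cap\D_{<b}\mid\rho)\le O_U(2^{b-d})$. Recovering $\rho$ from $\sigma$ and $d$ gives $\Delta_K(\rho)<b+2K(d)+O_U(1)\le b+d+O_U(1)$. Combining these yields (1). In your language, this is the left-c.e.\ semimeasure $\nu(\rho)=\sum_{\tau\succeq\rho}2^{-K(\tau)-K(|\tau|-|\rho|)}$, which needs no stage-wise test.

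\textbf{Part (3).} This is correct, and slightly more direct than the paper's, since the random real itself witnesses that $(\h\rho,\h b)$ is nonempty.
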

The proof of \Cref{numKclaimext} is delayed to Section \ref{numKsecpositive}.

\subsection{Countable non i.o. $K$-trivial solutions}

In this section, we construct a non i.o. $K$-trivial $x\in 2^\omega$
such that there are countably many solutions of $x$ that are not i.o. $K$-trivial.
i.e., We prove
\numKpropfinitenoniosolution*

\noindent\textbf{Observations.}
We observe the following.
Let $b,c\in\omega$
and $ 2^b$ many  nonempty conditions
$(\rho_i,b)_{i<2^b}$ with $\rho_i\in 2^n$ (for some $n$ for all $i<2^b$) so that
\footnote{For $c$ sufficiently large, it is impossible 
that $\rho_i,\rho_j$ has no common solution. 
But this does not affect how we use the idea to control the size of non common solutions.}
\begin{align}\label{numKeqobserve1}
&\text{for each $i\ne j$, there is no (common) $c$-solutions to
 $\{\rho_i,\rho_j\}$
  and }\\ \nonumber
  &\text{$\Delta_K(\rho_i) =b+O_U(1)$.}
 \end{align}
 Then we have:
 \begin{align}\label{numKeqobserve2}
\text{ there is some $i<2^b$ such that $\rho_i$ admits $O_U(2^c)$ many solutions.}
 \end{align}
 To see this, note that for every $n,a\in\omega$,
recall notation
 \def\D{\Delta}
  $$
 \D_{\leq a}:= \{\sigma\in 2^{<\omega}: \Delta_K(\sigma)\leq a\},
 $$
it is well known that 
  $|2^n\cap \D_{\leq a}|\leq O_U(2^a)$
  (see  \cite{MY11}). 
 Thus, 
 since $$\cup_{i<2^b} \Y_c(\rho_i)\subseteq 2^n\cap \D_{\leq b+c+O_U(1)},$$
 there must be some $i<2^b$ so that $$|\Y_c(\rho_i)|\leq O_U(2^{b+c})/2^b = O_U(2^c).$$
  This  observation 
 indicates that $\Y_c(x)$ consists of two parts, 
\begin{enumerate}
\item The closure of common $c$-solutions of each condition;
\item Other solutions;
\end{enumerate}
and the second part could be controlled to be finite.

On the other hand, we will use the following properties of $K$ 
to entail that, other than finitely many exceptions, 
all other reals in the first part is i.o. $K$-trivial.
The first property says that every common $c$-solution of a condition is a $K$-trivial;
and the second property says that for every $K$-trivial, the $\Delta_K$ complexity of 
it drops below a constant (that only depends on $U$) infinitely often.
\begin{restatable}{lemma}{numKlemcommonsolutionisKtrivial}\label{numKlemcommonsolutionisKtrivial}
Let $c\in\omega$ and let nonempty condition $p$.
For every $y\in \Y_c(p)$, $y$ is $K$-trivial.
\end{restatable}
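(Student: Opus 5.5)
The plan is to show directly that $K(y\uhr n)\leq K(n)+O_{p,c}(1)$ for every $n$, by finding, for each large $n$, a single real $x\in[p]$ whose initial segment $x\uhr n$ has $\Delta_K$ bounded by a constant depending only on $p$. Since $y\in\Y_c(p)$ means $K(y\uhr n)\leq K(x\uhr n)+c$ for every $x\in[p]$, any such $x$ immediately gives $K(y\uhr n)\leq K(n)+O_p(1)+c$, and so $y$ is $K$-trivial.

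\textbf{Step 1: boost the stem.} By \Cref{numKclaimext}(3), extend $p=(\rho,b)$ to a nonempty condition $(\rho',b)$. Choose $\rho'$ so that $\Delta_K(\rho')$ is much larger than $3b$. This is possible because $[p]$ has positive measure, so it contains a real that is not i.o.\ $K$-trivial along a suitable prefix, or we can simply take a long prefix of a random member. By \Cref{numKclaimext}(1), $\m([\rho',b]\,|\,\rho')$ is then close to $1$, and every $x\in[\rho',b]$ lies in $[p]$. The constant $K(\rho')$ is fixed from now on and is absorbed into $O_p(1)$.

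\textbf{Step 2: witnesses of low $\Delta_K$ inside the tree.} For large $n$, fix a length $a$, depending only on $b$ and $U$, and consider strings of the form
$$\sigma=\rho'\tau 0^{\,n-|\rho'|-a},\qquad |\tau|=a,\ \tau \text{ incompressible}.$$
Each such $\sigma$ satisfies $K(\sigma)\leq K(n)+a+K(a)+O_{\rho'}(1)$, because $\sigma$ is computable from $n$, $\tau$ and $\rho'$; so $\Delta_K(\sigma)=O_p(1)$. I will then argue, using symmetry of information, that some choice of $\tau$ gives prefixes $\rho'\tau\uhr i$ and $\rho'\tau0^j$ all having $\Delta_K\geq 3b$. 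The reason is that $K(\rho'\tau 0^j)\geq K(j)+K(\tau\mid j,K(j))-O(1)$, and an incompressible $\tau$ of length $a\gg b$ keeps this at least $K(|\rho'\tau0^j|)+3b$. Once the $\Delta_K$ of all prefixes is this large, $\sigma$ lies in the tree of $(\rho',b)$. Moreover, by \Cref{numKclaimext}(1), the condition $(\sigma,b)$ has positive measure, so some $x\in[\sigma,b]\subseteq[p]$ extends $\sigma$. This $x$ is the witness needed in the first paragraph, with $K(x\uhr n)=K(\sigma)\leq K(n)+O_p(1)$.

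\textbf{Main obstacle.} The hard part is Step 2: guaranteeing that every intermediate prefix of $\sigma$ keeps $\Delta_K\geq b$, uniformly in $n$. The tail $0^j$ makes $\sigma$ simple relative to its length, which is exactly what we want at level $n$. The danger is that it also makes the intermediate prefixes too simple, at lengths where $K(j)$ behaves badly. My first attempt is the symmetry-of-information lower bound above, with $a$ chosen large enough (say $a\geq 4b+2\log_2 b$) so that the $O(\log)$ losses are dominated. If this fails at some lengths, the fallback is a measure argument. Inside $[\rho']$, the strings violating the tree condition have small relative measure, by \Cref{numKclaimext}(1). The candidate strings $\rho'\tau\eta$, with $\eta$ ranging over a computable family of simple tails, can then be counted against this small measure, so that at least one candidate survives at every level $n$.
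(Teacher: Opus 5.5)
Your reduction in the first paragraph is fine, but Step 2 fails. The intermediate goal itself is false once the threshold $b$ exceeds an absolute constant, so no variant of that step can work. Your symmetry-of-information estimate treats $K(\tau\mid j,K(j))$ as roughly $|\tau|$. Incompressibility of $\tau$ says nothing about its complexity relative to $j$, and $j$ ranges over every value up to $n$.

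Concretely, put $w=\rho'\tau$ and let $j$ be the number whose binary expansion is $1w$. Then $|w0^j|=j+\lfloor\log_2 j\rfloor$, which is strictly increasing in $j$. So a fixed machine recovers $j$, then $w$, then $w0^j$ from the length alone, giving $\Delta_K(w0^j)\le d_0$ for an absolute constant $d_0$. Since $j<2^{|w|+1}$, for $b>d_0$ every candidate $\rho'\tau0^{n-|\rho'|-a}$ falls out of the tree once $n\ge|\rho'|+a+2^{|\rho'|+a+1}$. This is an explicit instance of \Cref{numKpropconstantreturnofKtrivial} for the computable real $\rho'\tau0^\omega$.

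No other choice of candidates can help either. If $|\sigma|=2^k$ and $\Delta_K(\sigma)\le C$, then $K(\sigma)\le K(k)+C+O(1)\le k+K(k)$ for large $k$. By \Cref{numKlemlowerbound}, some prefix $\sigma\uhr m$ with $m\ge 2^{k-1}$ then has $\Delta_K(\sigma\uhr m)\le O_U(1)$. Hence $2^{2^k}\cap p\cap\D_{\le C}=\emptyset$ for all large $k$ whenever $b$ exceeds that $O_U(1)$. So $\min_{\sigma\in 2^n\cap p}\Delta_K(\sigma)$ is unbounded in $n$, and $K(y\uhr n)\le K(n)+O(1)$ cannot be obtained by comparing $y$ with one member of $[p]$ per level. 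Your measure-counting fallback fails for the same reason, and is hopeless anyway: the strings of length $n$ with $\Delta_K\le C$ have total measure $O_U(2^{C-n})$, which any small exceptional set can cover.

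The missing idea is to aim for a weaker per-level bound that still forces $K$-triviality. The paper proves $\min_{\sigma\in 2^n\cap p}K(\sigma)\le h(n)+O_p(1)$ for every $n$, where $h$ is a nondecreasing Solovay function. Because $h(n)\ge\max_{m\le n}K(m)$, this leaves room at levels like $2^k$.

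The bound is realized by a Kraft function with three ingredients:
\begin{enumerate}
\item At each level it keeps a string $\sigma_n[t]\in 2^n\cap p[t]\cap\D_{\ge 3b}[t]$ with $\hat K$-value $h(n)+2|\rho|$.
\item It keeps the $\hat K$-mass placed above any $\tau$ below $2^{-K[t](|\tau|)-b}$, so every repair is paid for by the $K$-mass that caused it.
\item It uses $\m([\rho,3b]\mid\rho)>1/2$ to show that a replacement respecting this invariant always exists.
\end{enumerate}
Then $K(y\uhr n)\le h(n)+O_p(1)+c$. $K$-triviality follows from the theorem of Bienvenu, Downey, Nies and Merkle that initial-segment complexity bounded by a Solovay function characterizes $K$-triviality.
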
 
The proof of \Cref{numKlemcommonsolutionisKtrivial} is
 delayed to   Section \ref{numKseccomplexityofcommonsolution}.
\begin{restatable}{proposition}{numKpropconstantreturnofKtrivial}\label{numKpropconstantreturnofKtrivial}
For every $K$-trivial $x\in 2^\omega$, we have 
$\Delta_K(x\uhr n)\leq O_U(1)$ for infinitely many $n\in\omega$.
\end{restatable}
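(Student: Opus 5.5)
The plan is to build a single prefix-free machine $M$, independent of $x$, that turns any $U$-description of a length $m$ into a description of $x\uhr m$, but only for specially chosen lengths $m$ whose binary expansion itself stores a description of a long initial segment of $x$. Then for those $m$ we get $K(x\uhr m)\leq K(m)+c_M$, where $c_M$ is the coding constant of $M$ in $U$. This constant depends only on $U$ and not on the triviality constant of $x$. The $K$-triviality of $x$ is used only to guarantee that such lengths $m$ exist for infinitely many scales.

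\textbf{Choice of lengths.} Fix a $K$-trivial $x$ with $K(x\uhr n)\leq K(n)+d$ for all $n$, and a computable prefix-free self-delimiting code $\langle p\rangle$ of binary strings with $|\langle p\rangle|\leq 2|p|+2$. For $L\in\omega$ put $N_L:=2^{L+1}$ and let $p_L$ be a shortest $U$-program for $x\uhr N_L$. Then
\[
|p_L|\leq K(N_L)+d\leq K(L)+d+O(1)\leq 2\log_2 L+d+O(1).
\]
Hence for all sufficiently large $L$ (depending on $d$, which is harmless) we have $|\langle p_L\rangle|<L$. Read $\langle p_L\rangle$, padded on the left with zeros, as a number $v_L<2^L$, and set $m_L:=2^L+v_L$. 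So the binary expansion of $m_L$ is a leading $1$ followed by $L$ bits ending in $\langle p_L\rangle$; to make the code recoverable, I will place it in a fixed readable position, for example reversed, so that it is read from the least significant end. Since $2^L\leq m_L<2^{L+1}=N_L$, the lengths $m_L$ are pairwise distinct for distinct $L$, and $x\uhr m_L$ is an initial segment of $x\uhr N_L$.

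\textbf{The machine.} On input $q$, the machine $M$ simulates $U(q)$. If this outputs a number $m$, it writes $m=2^L+v$ with $v<2^L$, decodes a string $p$ from the $L$ low-order bits of $m$ (diverging if decoding fails), and runs $U(p)$. If $U(p)$ outputs a string of length at least $m$, then $M$ outputs its first $m$ bits. The domain of $M$ is contained in the domain of $U$, so $M$ is prefix-free. Moreover $M(m_L^*)=x\uhr m_L$, where $m_L^*$ is a shortest $U$-program for $m_L$. Therefore
\[
K(x\uhr m_L)\leq K(m_L)+c_M\quad\text{for all large }L,
\]
that is, $\Delta_K(x\uhr m_L)\leq c_M=O_U(1)$ at infinitely many lengths, which is the claim.

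\textbf{Main obstacle.} The only delicate point is making sure the constant does not secretly depend on $x$ or $d$. The argument handles this as follows. The dependence on $d$ enters only through the requirement ``$L$ sufficiently large'', ensuring that $\langle p_L\rangle$ fits below $2^L$; it never enters the machine $M$. The decoding convention, and the fact that $L$ is recoverable from $m$ alone (as $\lfloor\log_2 m\rfloor$), must be fixed in advance so that $M$ is a single uniform machine. If any technicality arises with the encoding, a fallback is to use symmetry of information, $\Delta_K(x\uhr m)=K(x\uhr m\mid m^*)+O(1)$, and to bound the conditional complexity by the same decoding procedure.
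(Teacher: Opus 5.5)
Your proof is correct, and it takes a genuinely different route from the paper. The paper deduces the statement from \Cref{numKpropconstantreturn2}, that is, from \Cref{numKlemlowerbound}. That lemma handles any $\sigma$ with $|\sigma|\in[2^n,2^{n+1})$ and $K(\sigma)\le n+K(n)+O_U(1)$. Its proof builds a Kraft function $\h K$ that gives each such $\sigma$ a description of length $n+K(n)$ of a prefix $\sigma\uhr m$. Here $m\in[2^{n-1},2^n)$ is chosen among the lengths with $K(m)\ge n+K(n)-O_U(1)$, each $m$ shared by $O_U(1)$ strings. Finiteness of the cost is argued online, by charging reassignments to changes of $K(n)$ and to $K$ compressing $m$. $K$-triviality enters only through $K(x\uhr 2^n)\le K(n)+O_x(1)\le n+K(n)$ for large $n$.

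You instead use an encoding trick: the witness length $m_L$ itself stores a program for the longer segment $x\uhr 2^{L+1}$. A single fixed machine then turns $m_L^*$ into $x\uhr m_L$. This needs no counting, no selection of incompressible lengths, and no online cost analysis. It also gives an explicit witness in every dyadic block $[2^L,2^{L+1})$ for large $L$, and the constant is visibly $O_U(1)$.

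The price is the hypothesis your method can handle. Since $m_L$ has only $L$ free bits, you need $K(x\uhr 2^{L+1})$ to be at most about $L$ (about $L/2$ with your crude code). The paper's lemma works up to the threshold $n+K(n)+O_U(1)$, which exceeds what can be stored in the length. That sharper form is what \Cref{numKampleexcess} needs. Its contrapositive gives $K(x\uhr 2^n)\ge n+K(n)-O(1)$, which reduces the series to $\sum_n 2^{-K(n)}$. A bound like $K(x\uhr 2^n)\ge n-O(1)$, the most an encoding into $m$ could give, would not make $\sum_n 2^{n-K(x\uhr 2^n)}$ summable. For the statement at hand, though, your argument is complete and simpler.

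Your description of where $\langle p_L\rangle$ sits among the low-order bits is a little muddled. Any fixed convention works because the code is self-delimiting, for example taking the binary expansion of $m_L$ to be $1\langle p_L\rangle 0^{L-|\langle p_L\rangle|}$. The symmetry-of-information fallback is not needed.
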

\begin{proof}
We will prove a stronger version of 
 \Cref{numKpropconstantreturnofKtrivial},
namely \Cref{numKpropconstantreturn2}.
Clearly  \Cref{numKpropconstantreturnofKtrivial} follows directly from 
\Cref{numKpropconstantreturn2}.
\end{proof}

 The constant drop of $\Delta_K$ complexity of a $K$-trivial is interesting in its own.
We firstly realize the observation (\ref{numKeqobserve2})
to control the size of ``other" $c$-solutions.
 \begin{remark}
 \Cref{numKpropconstantreturnofKtrivial}
also implies that 
members of a condition with sufficiently large (depending on $U$) threshold  
cannot be $K$-trivial (but could be i.o. $K$-trivial).
 \end{remark}

\ \\

\noindent\textbf{Controlling the ``other" $c$-solutions.}

\def\maximal{maximal}
The goal of this subsection is \Cref{numKlem6}.
First, we count the number of common solutions.
To this end, we establish a property of $K$ that is stronger than what is needed now 
but its full content will be 
used later.

\begin{restatable}{lemma}{numKsimultaneousdrop}\label{numKsimultaneousdrop}
Let nonempty conditions $(\rho_i,b)_{i<2^b}$ so that 
$b$ is sufficiently large depending on $U$.
Then there are infinitely many $n\in\omega$ such that
$$2^n\cap \D_{\leq b+O_U(1)}\cap (\rho_i,b)\ne\emptyset$$
for all $i<2^b$.
\end{restatable}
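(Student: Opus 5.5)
The plan is to produce, at infinitely many lengths $n$, one explicit string of low conditional complexity in each tree $(\rho_i,b)$. These strings will be ``simple given $n$'', and $n$ will be chosen so that its binary expansion itself encodes the finitely many pieces of information needed to locate them.

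\emph{Two preliminary facts.} First, $\Delta_K$ changes by at most $O_U(1)$ per step along any path, since $|K(\sigma j)-K(\sigma)|\leq O_U(1)$ and $|K(n+1)-K(n)|\leq O_U(1)$. Second, if $|\sigma|=n$ and $K(\sigma\mid n^*)\leq d$, where $n^*$ is a shortest program for $n$, then $\Delta_K(\sigma)\leq d+O_U(1)$.

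\emph{Step 1: good stems with padding.} Using \Cref{numKclaimext}(1) and (3), I extend each $\rho_i$ inside its tree to a stem $\tau_i$ with $\Delta_K(\tau_i)\geq 3b+k$ for a large auxiliary parameter $k$. Then $\m([\tau_i,b]\mid\tau_i)>1-2^{-(b+k)}$, so the subtree above $\tau_i$ is nearly full. I then consider, for large $n$, the canonical extension $\sigma_i^n=\tau_i\,\pi_i^n$ of length $n$, where $\pi_i^n$ is chosen by a rule that is uniform given the finite tuple $\vec\tau=(\tau_i)_{i<2^b}$ and $n$. The first candidate rule is the leftmost extension of $\tau_i$ staying in the set of strings with $\Delta_K\geq b$. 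This set is not c.e., however, so I would instead use a rule effective in $n^*$ and the tuple. For example: append zeros, but branch away whenever $\Delta_K$ threatens to drop below $b$ before level $n$. The nearly full measure from \Cref{numKclaimext}(1) guarantees that such a branching is always possible.

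\emph{Step 2: the encoding.} Choose $n$ so that a prefix of its binary expansion encodes $\vec\tau$ (and $k$). Then $K(\sigma_i^n\mid n^*)\leq O_U(1)$ uniformly in $i$, and the second preliminary fact yields $\Delta_K(\sigma_i^n)\leq O_U(1)\leq b+O_U(1)$. Since infinitely many $n$ carry the same encoded prefix, this gives infinitely many $n$, provided $\sigma_i^n\in(\rho_i,b)$ for all $i$ simultaneously.

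\emph{Main obstacle.} The hardest step is exactly this membership claim. The canonical low-complexity string at level $n$ has $\Delta_K$ near $0$, so it lies \emph{outside} the tree, which requires $\Delta_K\geq b$ at every level. I must therefore hit the window $[b,\,b+O_U(1)]$ at level $n$ exactly, for all $2^b$ trees at once.

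\emph{Attack on the obstacle.} The first thing I would try is to add about $b$ bits of controlled information rather than take zero-padded strings. Concretely, let the $i$-th string at level $n$ be determined by $n^*$, the tuple $\vec\tau$, and a $b$-bit word $w_i$, chosen so that $\Delta_K(\sigma_i^n)\geq b$ throughout while $K(\sigma_i^n\mid n^*)\leq b+O_U(1)$. There are $2^b$ choices of $w_i$, and the counting fact $|2^m\cap\D_{\leq a}|\leq O_U(2^a)$ from \cite{MY11} shows that at every intermediate level $m$ only a small fraction of them can fall below threshold. Combined with the $O_U(1)$-Lipschitz behaviour of $\Delta_K$ along paths and the near-full measure above each $\tau_i$, a union bound over levels should leave, for each $i$, some $w_i$ keeping $\sigma_i^n$ in $(\rho_i,b)$. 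Making this union bound converge over all levels between $|\tau_i|$ and $n$ is where I expect the real work, and where the choice $\Delta_K(\tau_i)\geq 3b+k$ with $k$ large must be exploited.
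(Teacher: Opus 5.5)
There is a genuine gap. It sits at the step you yourself flag as the main obstacle, and the bit-accounting around that step is also off.

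\textbf{The index $i$ uses up the whole budget.} In Step 2 you claim $K(\sigma_i^n\mid n^*)\leq O_U(1)$ uniformly in $i$. When the stems are pairwise incomparable, the $2^b$ strings $\sigma_i^n$ are pairwise distinct. At most $2^{d+1}$ strings satisfy $K(\cdot\mid n^*)\leq d$, so naming $i$ alone already costs about $b$ bits. The whole slack $b+O_U(1)$ in the lemma is therefore consumed by the index. Your attack describes $\sigma_i^n$ from $n^*$, $\vec\tau$, $i$ \emph{and} a $b$-bit word $w_i$, which only yields $\Delta_K(\sigma_i^n)\leq 2b+O_U(1)$.

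\textbf{The witness cannot be chosen effectively.} Because of the above, the choice of the witness inside each tree must be essentially free, i.e.\ determined by $n^*$ with $O(1)$ advice. This cannot be done effectively. Membership in $(\rho_i,b)$ depends on the final values of $K$ on all strings and lengths between $|\rho_i|$ and $n$, and beyond $n$ for extendibility; $n^*$ does not supply these. ``Branch away whenever $\Delta_K$ threatens to drop'' is not an effective rule.

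\textbf{Counting and measure do not rescue it.} $|2^m\cap\D_{<b}|$ may be as large as $O_U(2^b)$ at \emph{each} level $m$, i.e.\ as large as your whole candidate pool, so the per-level bad fraction need not be small. Even with $2^{b+c}$ candidates, per-level losses of $2^{-c}$ summed over roughly $n$ levels diverge. The near-full measure of $[\tau_i,b]$ does not help either. Your candidates form a family of $O(2^b)$ low-complexity strings of negligible measure, and the complement of the tree may cover all of them. Zero padding, for instance, produces $K$-trivial paths, which leave the tree by \Cref{numKpropconstantreturn2}.

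\textbf{The missing idea: an online selection that $K$ pays for.} The paper builds a Kraft function $\h K$ as follows.
\begin{enumerate}
\item Each tuple $(b,\bm\rho,n)$ gets a long working interval $I$.
\item $\h K$ works at the level $m_I\in I$ where the current $K$ is maximal on $I$.
\item It keeps one witness $\sigma_{I,i}\in 2^{m_I}\cap p_i$ per tree, chosen with no $\D_{<b}$-strings among its extensions up to length $m_I+3b$. Such a witness exists because $\m([p_i]\mid\rho_i)>1-2^{-4b}$.
\item Each witness gets weight $2^{-K(m_I)-b}$, so the total weight is $2^{-K(m_I)}$.
\item Whenever $K$ invalidates a witness (by compressing $m_I$, or by pushing some $\tau$ below or shortly above a witness into $\D_{<b}$), $\h K$ re-selects and re-pays. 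In that event $K$ has itself spent at least $2^{-K(\tau)}\geq 2^{-K(|\tau|)-b}\geq 2^{-K(m_I)-b}$, by maximality of $K(m_I)$ on $I$.
\item The effect on higher intervals is negligible, because intervals are long and $m_I$ lies far below $\max I$.
\end{enumerate}
This bounds the total weight of $\h K$ by a constant times that of $K$, and gives $K(\sigma_{I,i})\leq K(m_I)+b+O_U(1)$ for the final witnesses. Without such a cost argument, or an equivalent, your outline does not establish the lemma.
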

\noindent The proof of \Cref{numKsimultaneousdrop} is delayed to Section \ref{numKseccomplexityofcommonsolution}.
By \Cref{numKsimultaneousdrop},
we obviously have, for every $b,c\in\omega$ and every nonempty condition $(\rho,b)$:
\begin{align}\label{numKeqcountingcommonsolution}
 |\Y_c(\rho,b)|\leq O_U(2^{b+c}). 
\end{align}

Given a nonempty condition $p$, we   want the common $c$-solution of $p$
to be stable:
a condition $p$ is \emph{$c$-\maximal}\  iff
the set of common $c$-solutions
cannot be enlarged by extending $p$ to some condition with identical threshold.
i.e.,
For every nonempty extension $q$ of $p$
 with identical threshold,
we have $\Y_c(q) = \Y_c(p)$.
Thus, by (\ref{numKeqcountingcommonsolution}), it's easy to see that for every $c\in\omega$,
\begin{align}\label{numKeqmaximal}
&\text{Every nonempty condition admits an extension}\\ \nonumber
&\text{with identical threshold that is $c$-\maximal.}
\end{align}

In order to take advantage of (\ref{numKeqobserve2})
to control the number of  solutions,
we need to fulfill its hypothesis, namely (\ref{numKeqobserve1}),
cutting off as many common solutions as possible.

\begin{lemma}\label{numKlem5}
Let $ c\in\omega, \sigma\in 2^{<\omega}$ and
 nonempty $c$-\maximal\ conditions $(p_i=(\rho_i,b_i): i<2)$.
Suppose 
 $[\sigma]\cap \Y_c(\rho_1,b_1)=\emptyset$.
Then there are nonempty extension $(\h\rho_i,b_i)$ of $(\rho_i,b_i)$
with $|\h\rho_0| = |\h\rho_1|$
such that
 $[\sigma]^\preceq\cap \Y_c(\{\h\rho_0,\h\rho_1\})=\emptyset$.

\end{lemma}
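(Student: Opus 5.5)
The plan is to make $\h\rho_1$ do all the killing and use $\h\rho_0$ only to match lengths. Two easy observations come first.

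\emph{Monotonicity.} If a string $\tau$ has no $c$-solution extending $\sigma$, then no extension of $\tau$ has one either: a $c$-solution of $\tau'\succeq\tau$ restricts to a $c$-solution of $\tau$ of length $|\tau|$.

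\emph{Sufficiency.} A common $c$-solution of $\{\h\rho_0,\h\rho_1\}$ is in particular a $c$-solution of $\h\rho_1$.

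So it suffices to find a node $\tau$ of the pruned tree $(\rho_1,b_1)$ such that no string of length $|\tau|$ extending $\sigma$ is a $c$-solution of $\tau$. Given such $\tau$, apply \Cref{numKclaimext}(3) to get a nonempty extension $(\h\rho_0,b_0)$ of $p_0$ with $|\h\rho_0|>|\tau|$. Since $(\rho_1,b_1)$ is pruned and $\tau$ is a node of it, we can extend $\tau$ inside the tree to some $\h\rho_1$ with $|\h\rho_1|=|\h\rho_0|$. Then $(\h\rho_1,b_1)$ is nonempty and, by monotonicity, still kills every string extending $\sigma$.

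To find $\tau$, I use $c$-maximality. For any nonempty $q=(\tau,b_1)\le p_1$ we have $\Y_c(q)=\Y_c(p_1)$, so $[\sigma]\cap\Y_c(q)=\emptyset$. Let $T_q$ be the set of strings $\nu\succeq\sigma$ that are $c$-solutions of every node of the tree $q$ of length $|\nu|$. This set is closed under prefixes and finitely branching, and any infinite path through it lies in $[\sigma]\cap\Y_c(q)$. By K\"onig's lemma $T_q$ is therefore finite. Hence there is a level $N$ such that every $\nu\succeq\sigma$ of length $N$ fails to be a $c$-solution of \emph{some} node of $q$ at level $N$. What this gives is a different killer for each $\nu$; what we need is a single killer for all $\nu$ at once.

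The uniformization is done by iterating, with a bounded potential. Start with $q_0=p_1$. At stage $k$, take the current stem $\tau_k$ and a surviving $\nu$, meaning a $c$-solution of $\tau_k$ extending $\sigma$. Choose a node $\tau_{k+1}$ of $q_k$ killing $\nu$, and pass to $q_{k+1}=(\tau_{k+1},b_1)$. By monotonicity, every $\nu$ killed at some stage stays killed.

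To force termination I want to always compare at levels where the stem has small $\Delta_K$. Apply \Cref{numKsimultaneousdrop} to $2^{b_1}$ copies of the current condition. This gives infinitely many levels $n$ at which $q_k$ has a node $\tau$ with $\Delta_K(\tau)\le b_1+O_U(1)$. At such a node, every $c$-solution lies in $2^n\cap\D_{\le b_1+c+O_U(1)}$, so by the counting bound of \cite{MY11} there are at most $M=O_U(2^{b_1+c})$ survivors. I will therefore always extend the stem to such a low-$\Delta_K$ node, going beyond the level where the previous kill happened.

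The survivors at successive chosen levels then form a finitely branching tree with at most $M$ nodes per level. Each kill removes an entire subtree, namely all extensions of the killed $\nu$. Now suppose the process never reaches a stage with no survivors. The survivor tree is then infinite, and K\"onig's lemma gives a path $y\in[\sigma]$. The stems converge to some $x\in[p_1]$, and $y$ is a $c$-solution of $x$.

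To turn this into a contradiction with $[\sigma]\cap\Y_c(p_1)=\emptyset$, I need $y$ to be a common solution of the whole of $[p_1]$, not just of $x$. This is where the main obstacle lies. My first attempt will be to schedule the kills carefully. At each stage, kill the leftmost survivor that lies on a branch of the survivor tree surviving to the next low level. By the per-level bound $M$, the survivor tree then has at most $M$ infinite branches. Each such branch is killed by \emph{some} condition extending the current one, since $T_q$ is finite for every extension $q$. Killing them one at a time, at most $M$ times, should therefore empty the survivor set after finitely many stages. The resulting stem is the desired $\tau$.
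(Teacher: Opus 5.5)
\textbf{The reduction fails.} You ask for a node $\tau$ of $(\rho_1,b_1)$ such that no string of length $|\tau|$ extending $\sigma$ is a $c$-solution of $\tau$. But every string is a $c$-solution of itself, since $K(\tau\uhr m)\le K(\tau\uhr m)+c$. So whenever $\sigma\preceq\rho_1$, every node of $(\rho_1,b_1)$ extends $\sigma$ and survives against itself, and no such $\tau$ exists.

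This case is consistent with the hypothesis. Take $\rho_1$ to be a node of a $c$-maximal condition that lies on none of its finitely many common $c$-solutions, and set $\sigma=\rho_1$. It is also exactly the case that arises when the lemma is used in \Cref{numKlem6} and \Cref{numKclaim2}, where $\sigma$ ranges over all of $2^n\setminus \Y_c(p)\uhr n$. Hence $\h\rho_0$ cannot serve merely to match lengths: the conclusion concerns common solutions of the pair, and $\h\rho_0$ must do part of the killing.

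Your iteration also has two unjustified steps, independently of the above. First, finiteness of $T_q$ gives, for each extension of $\nu$, \emph{some} node killing it, not one node killing all extensions of $\nu$. Second, a per-level width bound $M$ on the survivor tree does not bound the number of kills. Nothing prevents survivors from branching again after each kill, so the process need not terminate. This is the obstacle you flag, and the ``leftmost surviving branch'' scheduling does not remove it.

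\textbf{The missing idea} is to apply your counting bound to $\h\rho_0$ rather than to nodes of $p_1$. By \Cref{numKsimultaneousdrop}, applied to copies of $p_0$, there are arbitrarily large $n$ and $\h\rho_0\in 2^n\cap(\rho_0,b_0)$ with $\Delta_K(\h\rho_0)\le \h c:=b_0+O_U(1)$. Then $|\Y_c(\h\rho_0)|\le k:=O_U(2^{c+\h c})$, and $k$ is fixed before $n$. It now suffices to find $\h\rho_1\in 2^n\cap(\rho_1,b_1)$ with no $c$-solution in the set $[\sigma]^\preceq\cap\Y_c(\h\rho_0)$, which has at most $k$ elements.

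Suppose this failed for arbitrarily large $n$. Then for such $n$ some $Y\subseteq 2^n\cap[\sigma]^\preceq$ with $|Y|\le k$ supplies a $c$-solution to every level-$n$ node of $(\rho_1,b_1)$. This property passes to prefixes, so compactness yields at most $k$ reals in $[\sigma]$ such that every $x\in[\rho_1,b_1]$ has a $c$-solution among them. Now prune these finitely many reals one at a time: if some $y$ among them is not a common solution, pass to a node beyond which $y$ fails for every member. This must stop at a nonempty $(\rho,b_1)\le p_1$ and some $y\in[\sigma]$ with $y\in\Y_c(\rho,b_1)$. By $c$-maximality, $\Y_c(\rho,b_1)=\Y_c(\rho_1,b_1)$, which contradicts $[\sigma]\cap\Y_c(\rho_1,b_1)=\emptyset$.

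Your K\"onig-style idea is essentially this pruning argument. It only goes through once the candidate set has a size bound fixed in advance, and that bound is what $\h\rho_0$ provides.
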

\begin{proof}
Firstly, we note that
\begin{claim}\label{numKclaim1}
For  every $k\in\omega$,
every sufficiently large
(depending on $\rho_1,b_1,c,k$)  $n\in \omega $, we have
there is no finite set $Y\subseteq 2^n\cap [\sigma]^\preceq$
with $|Y|\leq k$
such that every $\rho\in 2^n\cap (\rho_1,b_1)$
admits a $c$-solution in $Y$.
\end{claim}
\begin{proof}To see this,
note that otherwise, by compactness
\footnote{More specifically, if $Y\subseteq 2^n\cap [\sigma]$
makes every $\rho\in 2^n\cap (\rho_1,b_1)$ admitting a $c$-solution in $Y$,
then its prefix, namely $Y\uhr m$ (for some $m<n$)
also makes every $\rho\in 2^m\cap (\rho_1,b_1)$ admitting a $c$-solution in $Y\uhr m$.},
there is  a set $Y\subseteq [\sigma]$ of reals
with $|Y|\leq k$ such that
for every $x\in [\rho_1,b_1]$, $x$ admits a
$c$-solution  in $ Y$.
But by paring argument
\footnote{Climbing up along $(\rho_1,b_1)$ and get rid of 
as many elements of $Y$ as possible until 
reaching a $\rho\in (\rho_1,b_1)$ 
and some $y\in Y$ so that inside $(\rho,b_1)$, one cannot get rid of $y$},
there would be a $\rho\in (\rho_1,b_1)$ and $y\in Y$ such that $y\in\Y_c(\rho,b_1)$,
a contradiction to $c$-\maximal\ of $(\rho_1,b_1)$.

\end{proof}

\noindent Now
let $\h c\in\omega$ 
and sufficiently large $n  $
(depending on $\rho_1,b_1,c,\h c,k$)
so that for some
 $\h\rho_0\in (\rho_0,b_0)$
\begin{itemize}
\item $\h\rho_0\in 2^n\cap \D_{\leq \h c}$; moreover,

\item there is no set $Y\subseteq 2^n\cap [\sigma]^\preceq$ with $|Y|\leq O_U(2^{c+\h c})$
such that  every $\rho\in 2^n\cap (\rho_1,b_1)$
admits a $c$-solution in $Y$.

\end{itemize}
By \Cref{numKclaim1} and  \Cref{numKsimultaneousdrop},
such $n,\h c,\h\rho_0$ exist.
Now since $$|\Y_c(\h\rho_0)| \leq  O_U(2^{c+\h c}),$$
 there exists a $\h\rho_1\in (\rho_1,b_1)$
such that $\h\rho_1$ does not admit a $c$-solution in 
$[\sigma]^\preceq\cap \Y_c(\h\rho_0)$. 
Thus, we are done.

\end{proof}

\begin{lemma}\label{numKlem6}
Let $ c\in\omega$ and nonempty $c$-\maximal\ condition
$p=(\rho,b)$.
We have, for every $n\in\omega$,
there is a $\rho^*\in p$
with $|\rho^*|>n$
such that
$$\big|\Y_c(\rho^*) \setminus \big[\ \Y_c(p)\uhr n\ \big]^\preceq\big|
=O_U(2^c).$$ 

\end{lemma}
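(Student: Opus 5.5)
The plan is to realize the observation (\ref{numKeqobserve2}) inside $p$. We build $2^b$ nonempty extensions $(\rho_i,b)_{i<2^b}$ of $p$, all with stems of one common length $N>n$, satisfying two requirements:
\begin{itemize}
\item any two distinct $\rho_i,\rho_j$ have no common $c$-solution outside $\big[\Y_c(p)\uhr n\big]^\preceq$;
\item $\Delta_K(\rho_i)\leq b+O_U(1)$ for every $i<2^b$.
\end{itemize}
Given this, every $c$-solution $\tau$ of some $\rho_i$ has $\Delta_K(\tau)\leq b+c+O_U(1)$, because $|\tau|=|\rho_i|$. The sets $\Y_c(\rho_i)\setminus\big[\Y_c(p)\uhr n\big]^\preceq$ are pairwise disjoint and all lie in $2^N\cap\D_{\leq b+c+O_U(1)}$, which has size $O_U(2^{b+c})$ by \cite{MY11}. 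Hence some $i$ has $\big|\Y_c(\rho_i)\setminus\big[\Y_c(p)\uhr n\big]^\preceq\big|\leq O_U(2^{b+c})/2^b=O_U(2^c)$, and $\rho^*:=\rho_i$ works.

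First I record three preliminary facts.
\begin{itemize}
\item By (\ref{numKeqcountingcommonsolution}), $\Y_c(p)$ is a finite set of reals.
\item Any nonempty extension $q$ of $p$ with threshold $b$ is again $c$-\maximal\ with $\Y_c(q)=\Y_c(p)$. Indeed, an extension of $q$ is an extension of $p$, so its common solutions equal $\Y_c(p)=\Y_c(q)$.
\item If $\sigma\in 2^n$ is not in $\Y_c(p)\uhr n$, then $[\sigma]\cap\Y_c(q)=\emptyset$ for every such $q$.
\end{itemize}
Since $[p]$ has positive measure by \Cref{numKclaimext}, and using \Cref{numKclaimext}(3), I pick $2^b$ nonempty extensions of $p$ with threshold $b$ and stems of length $>n$. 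To equalize lengths, I extend each stem along the pruned tree $(\rho,b)$, which is always possible.

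Next I run a finite sequence of applications of \Cref{numKlem5}, one for each unordered pair $\{i,j\}$ and each $\sigma\in 2^n\setminus\Y_c(p)\uhr n$. Each application replaces $\rho_i,\rho_j$ by extensions $\h\rho_i,\h\rho_j$ of equal length with $[\sigma]^\preceq\cap\Y_c(\{\h\rho_i,\h\rho_j\})=\emptyset$. The hypothesis of \Cref{numKlem5} is maintained throughout: all current conditions remain $c$-\maximal\ with common solution set $\Y_c(p)$, so $[\sigma]\cap\Y_c(\rho_j,b)=\emptyset$. After each step I extend all the other stems to the new common length inside their trees.

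The key invariant is that the property ``no common $c$-solution in $[\sigma]^\preceq$'' persists under further extension. If $\tau$ is a common $c$-solution of extensions $\rho_i'\succeq\rho_i$ and $\rho_j'\succeq\rho_j$, then $\tau\uhr|\rho_i|$ is a common $c$-solution of $\rho_i,\rho_j$, because $\leq_K^c$ is required at all shorter lengths; moreover $\tau\uhr|\rho_i|$ still extends $\sigma$. So no earlier step is undone.

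After all steps, I apply \Cref{numKsimultaneousdrop} to the final $2^b$ conditions. This yields some $N$ larger than all current stem lengths and strings $\rho_i'\in 2^N\cap\D_{\leq b+O_U(1)}\cap(\rho_i,b)$, and these $\rho_i'$ are the final stems. The counting argument in the first paragraph then finishes the proof. Every $c$-solution of $\rho_i'$ outside $\big[\Y_c(p)\uhr n\big]^\preceq$ extends some $\sigma\in 2^n\setminus\Y_c(p)\uhr n$, so the disjointness established above applies to it.

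The main obstacle is the bookkeeping around \Cref{numKlem5}. I expect the persistence of $c$-\maximal ity and the monotonicity of ``no common solution in $[\sigma]^\preceq$'' under extension to be the points that need care. In particular, the padding steps must keep every condition nonempty and must not raise the threshold above $b$.
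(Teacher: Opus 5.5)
Your proposal is correct and is essentially the paper's proof. You repeatedly apply \Cref{numKlem5} to get $2^b$ equal-length stems in $p$ whose $c$-solutions outside $\big[\Y_c(p)\uhr n\big]^\preceq$ are pairwise disjoint, move them into $\Delta_{\leq b+O_U(1)}$ with \Cref{numKsimultaneousdrop}, and pigeonhole over the $O_U(2^{b+c})$ strings of that length with $\Delta_K\leq b+c+O_U(1)$. Your explicit checks --- that threshold-$b$ extensions of $p$ stay $c$-maximal with the same common solutions, and that the separation property survives equal-length extension --- are exactly the bookkeeping the paper leaves implicit.
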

The point here is that $O_U(2^c)$ is a  constant that does not depend on $\rho^*,b$.
\begin{proof}
Repeatedly applying  \Cref{numKlem5},
we can find $2^b$
many strings
$ \rho_0,\cdots,\rho_{2^b-1}$  in $2^m\cap p$ (for some $m>n$)
such that for every $i\ne j<2^b$,
$$ \big(2^m\setminus \big[\ \Y_c(p)\uhr n\ \big]^\preceq \big)
\cap \Y_c(\{ \rho_i, \rho_j\})=\emptyset.$$
In another word,
\begin{align}\label{numKeq4}
\text{$\Y_c(\rho_i) \setminus \big[\ \Y_c(p)\uhr n\ \big]^\preceq$
are mutually disjoint.}
\end{align}
By \Cref{numKsimultaneousdrop},
there are $\h n\geq m$ and   $\h\rho_i\in (\rho_i,b)$ for each $i<2^b$
such that $\h\rho_i\in 2^{\h n}\cap \D_{\leq b+O_U(1)}$ for all $i<k$.
Therefore,  
\begin{align}\label{numKeq5}
|\cup_{i<2^b} \Y_c(\h\rho_i)|\leq O_U( 2^{b+c})
\end{align}
Thus, by
(\ref{numKeq4})(\ref{numKeq5}), we have: for some $i^*$,
$$
\big|\Y_c(\h\rho_{i^*})\setminus \big[\ \Y_c(p)\uhr n\ \big]^\preceq\big|
= O_U(2^{b+c})/2^b=O_U(2^c).
$$
Thus, $\h\rho_{i^*}$ is the desired $\rho^*$.

\end{proof}

\noindent\textbf{Construction and verification.}

Let sequence of nonempty conditions
$p_0=(\rho_0,b_0)\geq p_1=(\rho_1,b_1)\geq\cdots$
with
$b_s<b_{s+1}$
be such that  for every $c\in\omega$,
every $s\geq c$, we have:
\begin{enumerate}
\item  $p_s$ is $c$-\maximal;

\item 
For every $y\in \Y_c(p_s)$,
we have $\Delta_K(y\uhr n_{c,y})\leq O_U(1)$
for some $|\rho_s|<n_{c,y}<|\rho_{s+1}|$;

\item  
For some number $m_{c,s}$ with
 $\max_{y\in \Y_c(p_s)} n_{c,y}<m_{c,s}<|\rho_{s+1}|$,
some string $\sigma_{c,s}$ with $\rho_s\prec\sigma_{c,s}\prec \rho_{s+1}$ and $|\sigma_{c,s}|\geq m_{c,s}$,
we have 
$$\Y_c(\sigma_{c,s})\setminus [\Y_c(p_s)\uhr m_{c,s}]^\preceq\big|\leq O_U(2^c).$$

\end{enumerate}
The
existence of a nonempty extension  is due to \Cref{numKclaimext};
the three items are made possible due to 
(\ref{numKeqmaximal}), \Cref{numKpropconstantreturnofKtrivial}
and \Cref{numKlem6}.
Let $x:= \cup_s \rho_s$. Clearly $x$ is non i.o. $K$-trivial. 
We show that $x$ is as desired.

Now, pick a $c\in\omega$ and
$y\in \Y_c(x)$,
we have two cases.

\noindent\textbf{Case 1.}
For infinitely many $s\in\omega$, 
$$y\uhr |\sigma_{c,s}|\in [\Y_c(p_s)\uhr m_{c,s}]^\preceq.$$
In this case, by item (2),
there are some $|\rho_s|<n<m_{c,s}$ such that $\Delta_K(y\uhr n)\leq O_U(1)$.
Thus $y$ is i.o. $K$-trivial.

\noindent\textbf{Case 2.}
For all but finitely many $s\in\omega$,
$$y\uhr |\sigma_{c,s}|\in \Y_c(\sigma_{c,s})\setminus [\Y_c(p_s)\uhr m_{c,s}]^\preceq.$$
But due to item (3), the number of such $y$ is bounded by $O_U(2^c)$.

Thus we are done proving \Cref{numKpropfinitenoniosolution}.

\subsection{Intersection of two non i.o. $K$-trivial}

In this subsection, we prove:
\numKpropintersectionoftwo*

The proof is similar to that of \Cref{numKpropfinitenoniosolution}.
We firstly note that given two nonempty conditions
$\t p = (\t\rho,\t b),\h p=(\h\rho,\h b)$,
 we can extend them to nonempty conditions $\t q=(\t\sigma,\t b+1),\h q
 =(\h\sigma,\h b+1)$ 
 such that the intersection $\Y_c(\t\sigma)\cap \Y_c(\h\sigma)$
 is controlled by $\Y_c(\t p)\cap \Y_c(\h p)$.
 For a set $Y$ of reals, we say $Y$ \emph{splits } at level $n$ 
iff for every two distinct $y_0,y_1\in Y$, $y_0\uhr n\ne y_1\uhr n$.
\begin{claim}\label{numKclaim2}
Let $c \in\omega$ and   nonempty $c$-\maximal\ conditions $\t p=(\t\rho,\t b), \h p=(\h\rho,\h b)$.
Let $n\in\omega$
so that $\Y_c(\t p)\cup\Y_c(\h p)$ splits at level $n$.
Then there exist two nonempty $c$-\maximal\ extensions
$(\t\sigma,\t b+1)$ and $(\h\sigma, \h b+1)$ 
of $\t p$ and $\h p$ respectively
with identical stem length greater than $n$ such that 
\begin{align}\label{numKeqclaim2}
\Y_c(\t\sigma)\cap \Y_c(\h\sigma) \subseteq \bigg[\big(\Y_c(\t p)\cap \Y_c(\h p)\big)\uhr n\bigg]^\preceq;
\end{align}
moreover, 
$$
\Y_c(\t\sigma,\t b+1)\cap \Y_c(\h\sigma,\h b+1) = \Y_c(\t p)\cap \Y_c(\h p).
$$
\end{claim}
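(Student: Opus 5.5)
Let $Z:=\Y_c(\t p)\cap\Y_c(\h p)$ and $S:=\Y_c(\t p)\cup\Y_c(\h p)$. Both are finite by (\ref{numKeqcountingcommonsolution}). The plan mirrors \Cref{numKlem5}, applied one string at a time. We cut off every $c$-solution prefix at level $n$ that is not the prefix of a common element of $Z$, then raise the thresholds and restore $c$-maximality.

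\textbf{Step 1 (the bad cylinders).} Let $B:=2^n\setminus (Z\uhr n)$. Fix $\tau\in B$. Since $S$ splits at level $n$, $\tau$ is the prefix of at most one element of $S$. So either $[\tau]\cap\Y_c(\t p)=\emptyset$ or $[\tau]\cap\Y_c(\h p)=\emptyset$; otherwise the two solutions through $\tau$ would coincide and lie in $Z$.
\begin{itemize}
\item If $[\tau]\cap\Y_c(\h p)=\emptyset$, apply \Cref{numKlem5} with $\sigma=\tau$, $p_0=\t p$, $p_1=\h p$.
\item Otherwise apply it with the roles swapped.
\end{itemize}
This yields extensions with equal stem lengths such that $[\tau]^\preceq\cap\Y_c(\{\t\rho',\h\rho'\})=\emptyset$. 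Before moving to the next $\tau\in B$, we re-extend both conditions, keeping the thresholds, to $c$-maximal ones using (\ref{numKeqmaximal}). We then lengthen the shorter stem, via \Cref{numKclaimext}(3) at the same threshold, so that the stems again have equal length.

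The separation property persists under extension. Indeed, any $c$-solution to a longer stem restricts to a $c$-solution of its prefix, so $\Y_c$ of an extension is contained in $[\Y_c(\text{prefix})]^\preceq$. Maximality does not change $\Y_c(\t p)$ or $\Y_c(\h p)$, so the hypothesis of \Cref{numKlem5} is still available at each stage. I expect this part to be the main technical point: \Cref{numKlem5} requires $c$-maximal conditions, and the disjointness hypothesis refers to the common solutions of the current, already extended, conditions. After iterating over the finitely many $\tau\in B$, we obtain stems $\t\sigma_0,\h\sigma_0$ of equal length greater than $n$ satisfying (\ref{numKeqclaim2}).

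\textbf{Step 2 (raising thresholds).} We need $(\t\sigma,\t b+1)$ nonempty with $\t\sigma\in(\t\sigma_0,\t b)$. Use \Cref{numKclaimext}(3) with $\h b=\t b+1$, and likewise for the other condition. Equalize the stem lengths by further extension at the new thresholds. Then apply (\ref{numKeqmaximal}) at thresholds $\t b+1$ and $\h b+1$ to make both conditions $c$-maximal, again equalizing lengths. Inclusion (\ref{numKeqclaim2}) survives all these extensions by the observation above.

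\textbf{Step 3 (the moreover part).} Each $y\in\Y_c(\t\sigma,\t b+1)$ is a $c$-solution to every member of $[\t\sigma,\t b+1]$, and this set is contained in $[\t p]$, since every member lies in $[\t\rho]$ with $\Delta_K\ge\t b$ above $|\t\rho|$. Hence $\Y_c(\t\sigma,\t b+1)$ is a subset of $\Y_c(\t\sigma)$, but not obviously of $\Y_c(\t p)$, because raising the threshold can enlarge the set of common solutions.

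\begin{itemize}
\item $\subseteq$: an element of the left-hand side lies in $\Y_c(\t\sigma)\cap\Y_c(\h\sigma)$. By (\ref{numKeqclaim2}) it then extends some $z\uhr n$ with $z\in Z$. To conclude that it equals $z$, we arrange in Step 1 that the cut-off is also performed relative to the new maximal common solutions, iterating until stable. By (\ref{numKeqcountingcommonsolution}) the common-solution sets are finite, and maximality makes them stabilize, so this loop terminates.
\item $\supseteq$: $Z$ is contained in both $\Y_c(\t p)$ and $\Y_c(\h p)$. Since $[\t\sigma,\t b+1]\subseteq[\t p]$, each $z\in Z$ is a common $c$-solution of the narrower condition, and similarly on the other side.
\end{itemize}
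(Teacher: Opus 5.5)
Your Steps 1 and 2 are fine and agree with the paper's first part. The paper applies \Cref{numKlem5} to every $\sigma\in 2^n\setminus(\Y_c(\h p)\uhr n)$ and then to every $\sigma\in 2^n\setminus(\Y_c(\t p)\uhr n)$, which is your case split. The gap is in the $\subseteq$ direction of the ``moreover'' part.

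If $y$ lies in $\Y_c(\t\sigma,\t b+1)\cap\Y_c(\h\sigma,\h b+1)$ and also in $\Y_c(\t p)\cup\Y_c(\h p)$, then (\ref{numKeqclaim2}) and splitting at level $n$ already give $y\in Z$. The danger is a $y$ that is new on both sides: a common $c$-solution of both raised conditions that lies in neither $\Y_c(\t p)$ nor $\Y_c(\h p)$ but passes through some $z\uhr n$ with $z\in Z$. Raising both thresholds independently, as in your Step 2, does nothing to exclude such $y$. Your remedy does not work as stated:
\begin{itemize}
\item It is circular. The raised-threshold common-solution sets do not exist during Step 1; they depend on the stems that Step 1 produces.
\item The termination argument fails. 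Extending a condition at the same or a higher threshold shrinks its class, so it can only enlarge its common solutions. To kill some $y\in\Y_c(\t\sigma,\t b+1)$ you must drop back to threshold $\t b$ and leave the tree $(\t\sigma,\t b+1)$. After re-raising, the new $c$-maximal set may contain fresh elements.
\end{itemize}
The bound (\ref{numKeqcountingcommonsolution}) limits the size of these sets, not their identity, so nothing monotone makes a kill-and-re-raise loop stabilize.

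The missing idea is to raise the two thresholds one at a time, freezing one side before cutting on the other, as the paper does.
\begin{enumerate}
\item Extend only $(\t\sigma_0,\t b)$ to a nonempty $c$-maximal $(\t\eta,\t b+1)$. By maximality, the finite set $\Y_c(\t\eta,\t b+1)$ is now fixed under every later extension at threshold $\t b+1$.
\item Keep the other side at threshold $\h b$. By $c$-maximality of $\h p$, its common-solution set there is still exactly $\Y_c(\h p)$. So each $y\in\Y_c(\t\eta,\t b+1)\setminus\Y_c(\h p)$ fails to be a $c$-solution of some member of $[\h\sigma_0,\h b]$. Finitely many extensions at threshold $\h b$ give $\h\eta$ with $\bigl(\Y_c(\t\eta,\t b+1)\setminus\Y_c(\h p)\bigr)\cap[\Y_c(\h\eta)]=\emptyset$.
\item Only now raise to $\h b+1$, and make both sides $c$-maximal with equal stem lengths.
\end{enumerate}
Any $y$ in the final intersection lies in $\Y_c(\t\sigma,\t b+1)=\Y_c(\t\eta,\t b+1)$ and in $[\Y_c(\h\eta)]$, hence in $\Y_c(\h p)$. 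Since also $y\in[\Y_c(\t\sigma_0)]$, your Step 1 together with splitting at level $n$ forces $y\in\Y_c(\t p)$. This is the paper's (\ref{numKeq6}), which in your setup follows automatically. The solutions that appear when the second threshold is raised are harmless, because they would also have to lie in the set already frozen on the first side.
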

\begin{proof}
 The first part (\ref{numKeqclaim2}) follows by repeatedly applying \Cref{numKlem5}
 (with $p_0,p_1$ substituted by $\t p,\h p$)
 with respect to every $\sigma\in 2^n\setminus \big(\Y_c(\h p)\uhr n\big)$;
 and then repeatedly applying \Cref{numKlem5} again
(with $p_0,p_1$ substituted by $\h p,\t p$)
with respect to every $\sigma\in 2^n\setminus  \big(\Y_c(\t p)\uhr n\big)$.
In the end, we get $\t\tau\in \h p, \h\tau\in \h p$
such that 
\begin{align}\label{numKeqclaim2-1}
\Y_c(\t\tau)\cap \Y_c(\h\tau) \subseteq \bigg[\big(\Y_c(\t p)\cap \Y_c(\h p)\big)\uhr n\bigg]^\preceq.
\end{align}
Note that (\ref{numKeqclaim2-1}) will be preserved for any future extension of $(\t\tau, \t b), (\h\tau,\h b)$.

To address the ``moreover" part,
we need to extend $\t\tau,\h\tau$ one by one to 
prevent new common solution of each other to become intersection
of common solution.
First, we may assume
\begin{align}\label{numKeq6}
\bigg(\Y_c(\h p)\setminus \Y_c(\t p)\bigg)\cap [\Y_c(\t\tau)]=
 \emptyset
\end{align}
otherwise extend $\t\tau$ to kill each $y\in \Y_c(\h p)\setminus \Y_c(\t p)$.
Now use \Cref{numKclaimext} to extend
$(\t\tau, \t b) $ to nonempty
$c$-\maximal\ condition
$(\t\eta,\t b+1)$
(don't do this for the other condition for now)
and then extend $(\h\tau, \h b)$
to kill each  $y\in \Y_c(\t\eta,\t b+1)\setminus \Y_c(\h p)$.
i.e., Extend $(\h\tau,\h b)$ to some nonempty condition  $(\h\eta,\h b)$
so that 
\begin{align}\label{numKeq7}
\bigg(\Y_c(\t\eta,\t b+1)\setminus \Y_c(\h p)\bigg)\cap [\Y_c(\h\eta)] = \emptyset.
\end{align}
Finally, by extension of condition 
\Cref{numKclaimext}, extend $(\t\eta,\t b+1)$
and $(\h\eta,\h b)$ to nonempty $c$-\maximal\ condition
$(\t\sigma,\t b+1)$ and $(\h\sigma,\h b+1)$ with $|\t\sigma| = |\h\sigma|$.
Now we have
(since $\Y_c(\h\sigma,\h b+1)\subseteq [\Y_c(\h\eta)] $ and since
$\Y_c(\t\sigma,\t b+1)\subseteq [\Y_c(\t\tau)]$):
\begin{align}
&\Y_c(\t\sigma,\t b+1)\cap \Y_c(\h\sigma,\h b+1)\\ \nonumber
  \subseteq& \Y_c(\t\sigma,\t b+1)\cap [\Y_c(\h\eta)]\cap \Y_c(\h\sigma,\h b+1)\cap [\Y_c(\t\tau)] \\ \nonumber
\text{by (\ref{numKeq7})}\ \ \ 
\ \ \subseteq & \Y_c(\h p)\cap \Y_c(\h\sigma,\h b+1)\cap [\Y_c(\t\tau)]\\ \nonumber
 \subseteq & \Y_c(\h p)\cap [\Y_c(\t\tau)] \\ \nonumber
\text{by (\ref{numKeq6})} \ \ \ \ \ 
\subseteq& \Y_c(\h p)\cap \Y_c(\t p).
\end{align} 
Thus, we are all done.

\end{proof} 

We strengthen \Cref{numKclaim2} to deal with multiple $(\Y_c:c\leq c_0)$
simultaneously.
\begin{claim}\label{numKclaim22}
Let $c_0 \in\omega$ and  let
 nonempty conditions $\t p=(\t\rho,\t b), \h p=(\h\rho,\h b)$
 be $c$-\maximal\ for all $c\leq c_0$. 
Let $n\in\omega$
so that $\Y_c(\t p)\cup\Y_c(\h p)$ splits at level $n$ for all $c\leq c_0$.
Then there exist two nonempty  extensions
$\t q=(\t\sigma,\t b+1)$ and $\h q=(\h\sigma, \h b+1)$ 
of $\t p$ and $\h p$ respectively
with identical stem length greater than $n$ 
and with $\t q,\h q$ being $c$-\maximal\ for all $c\leq c_0$  such that 
for every $c\leq c_0$
\begin{align}\label{numKeqclaim22}
\Y_c(\t\sigma)\cap \Y_c(\h\sigma) \subseteq \bigg[\big(\Y_c(\t p)\cap \Y_c(\h p)\big)\uhr n\bigg]^\preceq;
\end{align}
moreover, 
$$
\Y_c(\t\sigma,\t b+1)\cap \Y_c(\h\sigma,\h b+1) = \Y_c(\t p)\cap \Y_c(\h p).
$$
\end{claim}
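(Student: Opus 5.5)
The plan is to rerun the proof of \Cref{numKclaim2} once for each $c\le c_0$ in turn, always feeding the current pair of conditions into the next round. This works because every property established there is preserved by further extension. Inclusion (\ref{numKeqclaim22}) for a fixed $c$ survives any extension of the stems, since $\Y_c(\cdot)$ of a longer string projects into $\Y_c$ of its prefix. Likewise, the ``killing'' properties (\ref{numKeq6}) and (\ref{numKeq7}) survive extension. So the constraints for different $c$ never interfere, and only two points need care: keeping the threshold increase to exactly one, and keeping the $c$-maximality for all $c\le c_0$ simultaneously.

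\textbf{Step 1: the inclusion (\ref{numKeqclaim22}).} For each $c\le c_0$ in order, apply \Cref{numKlem5} repeatedly at the old thresholds $\t b,\h b$. First do this with respect to every $\sigma\in 2^n\setminus(\Y_c(\h p)\uhr n)$, then symmetrically with respect to every $\sigma\in 2^n\setminus(\Y_c(\t p)\uhr n)$.

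\textbf{Maximality during Step 1.} \Cref{numKlem5} needs $c$-maximal conditions. The current conditions are extensions of $\t p,\h p$ with the same threshold, so they remain $c'$-maximal for every $c'\le c_0$ by definition: their $\Y_{c'}$ equals that of $\t p$ or $\h p$. The result is stems $\t\tau,\h\tau$ of equal length satisfying (\ref{numKeqclaim22}) for all $c\le c_0$ at once.

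\textbf{Step 2: the ``moreover'' part.} Again work at the old thresholds. Extend $\t\tau$ so that (\ref{numKeq6}) holds for every $c\le c_0$. This is possible because each $\Y_c(\h p)\setminus\Y_c(\t p)$ is finite by (\ref{numKeqcountingcommonsolution}). Each $y$ in it is killed by some extension, by $c$-maximality of $\t p$ and the fact that $y\notin\Y_c(\t p)$; these are finitely many successive extensions.

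\textbf{Step 3: raising $\t\tau$'s threshold.} Raise the threshold of $\t\tau$ to $\t b+1$ and extend it to $(\t\eta,\t b+1)$. This must be nonempty by \Cref{numKclaimext} and $c$-maximal for all $c\le c_0$ simultaneously, which I get by applying (\ref{numKeqmaximal}) successively for $c=0,\dots,c_0$.

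\textbf{Step 4: the expected obstacle.} Successive maximization might break maximality for an earlier $c$. I would handle this by noting that $c$-maximality is inherited by same-threshold extensions, directly from its definition. Indeed, $\Y_c$ of any extension of an extension is sandwiched between the two, and all of them equal $\Y_c$ of the maximal one.

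\textbf{Step 5: the other side.} Extend $(\h\tau,\h b)$ to $(\h\eta,\h b)$ killing the finitely many elements of $\Y_c(\t\eta,\t b+1)\setminus\Y_c(\h p)$ for every $c\le c_0$. This is possible since $\h\tau$ is still $c$-maximal for $\h p$'s solution set.

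\textbf{Step 6: finish.} Extend both $(\t\eta,\t b+1)$ and $(\h\eta,\h b+1)$ to equal-length, nonempty, simultaneously $c$-maximal conditions $(\t\sigma,\t b+1)$ and $(\h\sigma,\h b+1)$. Equal length is reached by \Cref{numKclaimext}(3), then successive maximization, then re-padding, iterated so that the stem lengths agree. Because maximization is inherited under same-threshold extension, padding preserves maximality. The chain of inclusions displayed in the proof of \Cref{numKclaim2}, applied for each $c$, then yields the equality of intersections. The reverse inclusion holds because $\Y_c(\t p)\cap\Y_c(\h p)$ lies inside both new solution sets, by $c$-maximality of $\t p$ and $\h p$ applied to the same-threshold parts.
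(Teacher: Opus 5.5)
Your proposal is correct and follows the paper's own argument. The paper proves this claim with the single line ``the same as \Cref{numKclaim2}'', meaning each stage of that proof is carried out for all $c\le c_0$ before moving to the next stage. Your steps make that explicit: inheritance of $c$-maximality under same-threshold extensions, finiteness of the sets to be killed via (\ref{numKeqcountingcommonsolution}), raising each threshold only once, and padding to equal stem length.

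Two cosmetic remarks. Your opening description (``rerun the proof once for each $c$ in turn'') would, read literally, raise the thresholds $c_0+1$ times, although your Steps 1--6 interleave correctly. Also, the reverse inclusion needs no maximality at all: it is immediate from $[\tilde\sigma,\tilde b+1]\subseteq[\tilde p]$ and $[\hat\sigma,\hat b+1]\subseteq[\hat p]$, which give $Y_c(\tilde p)\subseteq Y_c(\tilde\sigma,\tilde b+1)$ and $Y_c(\hat p)\subseteq Y_c(\hat\sigma,\hat b+1)$.
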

\begin{proof}
The same as  \Cref{numKclaim2}.
\end{proof}

\begin{proof}[Proof of \Cref{numKpropintersectionoftwo}]
Using \Cref{numKclaim22} (repeatedly),
we obtain
 two sequence of nonempty conditions  
$ \t p_0=(\t\rho_0,\t b_0)\geq \t p_1=(\t\rho_1,\t b_1)\geq\cdots$
and $ \h p_0=(\h\rho_0,\h b_0)\geq \h p_1=(\h\rho_1,\h b_1)\geq\cdots$
with $\t b_s<\t b_{s+1}, \h b_s<\h b_{s+1}$ and with 
$|\t\rho_s|=|\h\rho_s| $
such that for every $c\in\omega$,  
\begin{enumerate}
\item $\t p_s,\h p_s$ are $c$-\maximal\ for all $s\geq c$
and $\Y_c(\t p_c)\cup \Y_c(\h p_c)$ splits at level 
$|\t\rho_c|$;

\item For every $s>c$, there exists $|\t\rho_{s-1}|<n_{c,s}<|\t\rho_s|$ such that
$$\Y_c(\t\rho_s)\cap \Y_c(\h\rho_s)\subseteq \bigg[\big(\Y_c(\t p_c)\cap \Y_c(\h p_c)\big)\uhr n_{c,s}\bigg]^\preceq.$$

\end{enumerate}
Thus, let $\t x:= \cup_s \t\rho_s, \h x:= \cup_s \h\rho_s$.
Clearly $\t x,\h x$ are non i.o. $K$-trivial.
On the other hand, item (2) entails that $$\Y_c(\t x)\cap \Y_c(\h x)=\Y_c(\t p_c)\cap \Y_c(\h p_c)$$ 
for all $c\in\omega$.
Thus,  by \Cref{numKlemcommonsolutionisKtrivial}
 $\Y_c(\t x)\cap \Y_c(\h x)$ is a set of $K$-trivial.
 
 Thus, we are done.

\end{proof}

%% file: arbitraryslowgrowth.tex
\section{Arbitrary slow growth and uncountable solutions}
\label{numKsecslowgrowth}

 The goal of this section is to prove \Cref{numKpropslowgrowth}.
 \numKpropslowgrowth*
 
 From the view of Section \ref{numKsecsmall},
 this boils down to extending a nonempty condition
 $p$ to alow $\Y_c(p)$ to grow more branches above an 
 arbitrary high level while
 while keeping the new stem's $\Delta_K$ complexity low enough.
 We starts by establish some properties of nonempty conditions.
 For a real $z\in 2^\omega$, we write $\Y_c^{-1}(z)$ for the 
 set of real $x$ so that $z\in \Y_c(x)$.

\begin{claim}\label{numK-claim-positivemeasurecertainclass}
Let closed set $Q\subseteq 2^\omega$ 
so that $Q$ contains a 1-random real $x_0$
and suppose for some $b\in\omega$, for every $y\in 2^\omega\setminus Q$,
there exists a prefix $\tau$ of $y$ 
such that $ \tau\in \D_{\leq b}$ and $[\tau]\cap Q=\emptyset$.
Then we have $\m(Q)>0$.

\end{claim}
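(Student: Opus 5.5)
The plan is to exhibit a set of the form $[\rho,b']$ (a condition in the sense of \Cref{numKsecsmall}) that sits inside $Q$, and then apply \Cref{numKclaimext}(1) to see that it has positive measure. The random real $x_0$ will supply the stem $\rho$. The hypothesis on $Q$ will give the inclusion $[\rho,b']\subseteq Q$.

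\textbf{Step 1 (choice of threshold and stem).} Let $b'\geq b+1$ be large enough, depending on $U$, for \Cref{numKclaimext} to apply to conditions with threshold $b'$. Since $x_0$ is 1-random, $K(x_0\uhr n)\geq n-O(1)$. On the other hand $K(n)\leq 2\log_2 n+O(1)$. Hence $\Delta_K(x_0\uhr n)\to\infty$, and we may fix $n$ with $\Delta_K(x_0\uhr n)\geq 3b'$. Put $\rho:=x_0\uhr n$. By \Cref{numKclaimext}(1) we get
$$\m([\rho,b'])>0.$$

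\textbf{Step 2 (inclusion $[\rho,b']\subseteq Q$).} Let $y\in[\rho]\setminus Q$. By hypothesis, $y$ has a prefix $\tau\in\D_{\leq b}$ with $[\tau]\cap Q=\emptyset$. This $\tau$ cannot be a prefix of $\rho$. Indeed, otherwise $x_0\in[\rho]\subseteq[\tau]$, while $x_0\in Q$, contradicting $[\tau]\cap Q=\emptyset$. Since $\tau$ and $\rho$ are both prefixes of $y$, it follows that $|\tau|>|\rho|$. Thus $y$ has an initial segment of length at least $|\rho|$ with $\Delta_K\leq b<b'$. So $y\notin[\rho,b']$. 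Therefore $[\rho,b']\subseteq Q$, and
$$\m(Q)\geq\m([\rho,b'])>0.$$

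\textbf{Main obstacle.} The only delicate point is that \Cref{numKclaimext} needs the threshold to be sufficiently large depending on $U$, whereas the $b$ in the statement is arbitrary. This is handled by working with $b'=\max(b+1,b_U)$, where $b_U$ is the bound required by \Cref{numKclaimext}. Raising the threshold only shrinks $[\rho,b']$, so the inclusion argument of Step 2 is unaffected. Randomness of $x_0$ is used only to guarantee stems of arbitrarily large $\Delta_K$ lying inside $Q$'s support, namely prefixes of a point of $Q$.
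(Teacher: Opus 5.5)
Your proof is correct and follows the paper's argument: take a long prefix $\rho$ of the random $x_0$ as stem, raise the threshold above $b$ so that \Cref{numKclaimext}(1) gives $\m([\rho,b'])>0$, and use $x_0\in Q$ to force every witness $\tau\in\D_{\leq b}$ for $y\in[\rho]\setminus Q$ to be strictly longer than $\rho$, which yields $[\rho,b']\subseteq Q$. Your choice $b'\geq b+1$ also makes the inclusion step exact; the paper writes $[\sigma,b]\subseteq Q$, which should be read with the raised threshold $\hat b>b$, because a string with $\Delta_K$ exactly $b$ lies in $\D_{\leq b}$ but is not excluded by threshold $b$.
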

\begin{proof}
 
Pick a sufficiently long (depending on $b$) prefix $\sigma$ of $x_0$
(so $K(\sigma)>|\sigma|/2$
is sufficiently large depending on  $ b$).
By \Cref{numKclaimext} item (1),
 $ [\sigma, \h b]$ has positive measure
 where $\h b>b$ is  sufficiently large depending on $b$.
 But since $2^\omega\setminus Q\subseteq [\D_{\leq b}]$
 (and since $[\sigma]\cap Q\ne\emptyset$),
 we have $[\sigma,b]\subseteq Q$.
 Thus we are done.

\end{proof}

 \begin{restatable}{claim}{numKclaimpositiveconditiona}
 \label{numKclaimpositiveconditiona}
 Let $c\in\omega$ and let nonempty condition $p=(\rho,b)$
 with $b$ sufficiently large depending on $U$.
 Suppose $\Y_c(p)$ contains some eventually-zero real $y$.
 Then for every $N$, there exists an eventually-zero real $z\ne y$
 with $z\uhr N=y\uhr N$ such that 
 $$
 \m([p]\cap \Y_c^{-1}(z))>0.
 $$
 
 \end{restatable}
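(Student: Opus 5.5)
The plan is to apply \Cref{numK-claim-positivemeasurecertainclass} to the closed set
$$Q:=[p]\cap \Y_c^{-1}(z),$$
for a suitably chosen eventually-zero $z$. The real $z$ will be a one-bit perturbation of $y$ placed far out, past a level chosen after fixing a 1-random witness. The claim needs two inputs: a 1-random real in $Q$, and the fact that every real outside $Q$ has a prefix of bounded $\Delta_K$ complexity whose cylinder misses $Q$.

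\textbf{Choosing the random witness and $z$.} By \Cref{numKclaimext} item (2), $\m([p])>0$, since $p$ is nonempty and $b$ is large. Hence $[p]$ contains a 1-random real $x_0$. Fix $x_0$ and a constant $d$ with $K(x_0\uhr n)\geq n-d$ for all $n$.

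Write $y=\tau 0^\omega$. Choose $M\geq \max(N,|\tau|,|\rho|)$ large, to be pinned down below, and set $z:=y\uhr M\cdot 1\cdot 0^\omega$. Since $y(M)=0$, we have $z\ne y$, $z\uhr N=y\uhr N$, and $z$ is eventually zero.

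Now check that $x_0\in\Y_c^{-1}(z)$.
\begin{itemize}
\item For $n\leq M$, we have $z\uhr n=y\uhr n$. Since $y\in\Y_c(p)$ and $x_0\in[p]$, we get $K(z\uhr n)\leq K(x_0\uhr n)+c$.
\item For $n>M$, the string $z\uhr n$ is computable from $n$, $M$ and $\tau$. So $K(z\uhr n)\leq K(n)+K(M)+O_\tau(1)\leq 4\log_2 n+O_\tau(1)$. This is at most $n-d+c\leq K(x_0\uhr n)+c$ as soon as $M$ is large enough relative to $d$ and $\tau$.
\end{itemize}
Hence $x_0\in Q$.

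\textbf{Covering the complement.} Let $w\notin Q$. There are three ways this can happen.
\begin{itemize}
\item $w\not\succeq\rho$. Then $\rho$'s incomparable prefix of $w$ of length at most $|\rho|$ has $\Delta_K$ bounded by a constant depending on $\rho$.
\item $\Delta_K(w\uhr n)<b$ for some $n\geq|\rho|$. Then $w\uhr n\in\D_{\leq b}$.
\item $K(z\uhr n)>K(w\uhr n)+c$ for some $n$. Then $\Delta_K(w\uhr n)<\Delta_K(z\uhr n)-c\leq O_{z}(1)$, because $z$ is eventually zero and so $\Delta_K(z\uhr n)=O_z(1)$.
\end{itemize}
In each case the witnessing prefix $\sigma$ certifies failure for every real extending it, so $[\sigma]\cap Q=\emptyset$. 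Thus the hypothesis of \Cref{numK-claim-positivemeasurecertainclass} holds with a threshold $b'=\max(b,O_{z,\rho}(1))$. The claim then yields $\m(Q)>0$, which is the desired conclusion $\m([p]\cap\Y_c^{-1}(z))>0$.

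\textbf{Main obstacle.} The delicate point is the order of choices for the levels $n>M$, where $z$ departs from $y$. The cost $K(M)$ of the flipped bit cannot be absorbed into $b+c$ uniformly over all of $[p]$; that is why positive measure is not immediate.

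The way around it is to fix the random $x_0$ first and only then choose $M$, so that the incompressibility of $x_0$ dominates $K(n)+K(M)$ for $n>M$. Positive measure of the whole set is then delegated to \Cref{numK-claim-positivemeasurecertainclass}, which needs only one random member together with the bounded-$\Delta_K$ covering of the complement.

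The largeness of $b$ (depending on $U$) enters only through \Cref{numKclaimext} and inside the proof of \Cref{numK-claim-positivemeasurecertainclass}. There one should check that the threshold $b'$ it uses, which depends on $z$, is still admissible. It is, because that claim allows its constant to depend on $b'$, with the stem taken long enough.
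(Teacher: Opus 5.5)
Your proposal is correct and is essentially the paper's proof. You fix a 1-random $x_0\in[p]$ via \Cref{numKclaimext}, flip a bit of $y$ far enough out that $K(n)+K(M)+O(1)$ is dominated by the incompressibility of $x_0$, check that $x_0\in[p]\cap\Y_c^{-1}(z)$, and conclude with \Cref{numK-claim-positivemeasurecertainclass}; your three-case check of that claim's covering hypothesis, with a threshold depending on $\rho$ and $z$, just spells out what the paper compresses into ``since $y_m$ is $K$-trivial''.
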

 \begin{proof}
By  \Cref{numKclaimext}, $\m([p])>0$, so there is a 
1-random real $x_0\in [p]$.
We note that for every sufficiently long
(depending on $y$ and $|\sigma|>N$) $\sigma\prec x_0 $,
say $|\sigma| = m$, let $y_m$ be $y$ with the $m^{th} $ bit reversed
(i.e., $y_m(n)=y(n)$ for all $n\ne m$ and $y_m(m) = 1-y(m)$)
we have 
$y_m\leq_K^c x_0$.
To see this,  note that for every $n\geq m$
(since $m$ is sufficiently large),
\begin{align}\nonumber
K(y_m\uhr n)&\leq K(y\uhr n)+K(m)+O_U(1)\\ \nonumber
&\leq K(n)+K(m)+O_y(1)<n/2\leq K(x_0\uhr n);
\end{align}
while for every $n<m$, $$K(y_m\uhr n)\leq K(y\uhr n)\leq K(x_0\uhr n)+c.$$

Now consider  the class:
$$
[p]\cap \Y_c^{-1}(y_m).
$$
Note that 1-random real $x_0$ is in this class
and this class satisfies the hypothesis of 
\Cref{numK-claim-positivemeasurecertainclass}
(since $y_m$ is $K$-trivial),
therefore (by 
\Cref{numK-claim-positivemeasurecertainclass}), $$\m([p]\cap \Y_c^{-1}(y_m))>0.$$
Thus we are done.
\end{proof}

The following \Cref{numKclaimpositiveconditionb}
shows that we can always
shrink to $\Y_c^{-1}(z)$ for some new eventually-zero 
real $z$ without losing too much measure of a condition
while finding a new stem
whose $\Delta_K$ complexity low enough
in the shrinked area.
 
\begin{restatable}{lemma}{numKclaimpositiveconditionb}
\label{numKclaimpositiveconditionb}
Let $c\in\omega$ and $b\in\omega$ be sufficiently large
(depending on $U$).
Let nonempty condition $p= (\rho,b)$
and eventually-zero real $z\in 2^\omega$ so that 
$$
\m([p]\cap \Y_c^{-1}(z))>0.
$$
Then, for every $N\in\omega$, there exists a nonempty extension $q= (\h\rho,  b)$
of $p$ with $|\h\rho|>N$ such that
$\Delta_K(\h\rho)\leq 3b$ and 
$$
\m([q]\cap \Y_c^{-1}(z))>0.
$$

\end{restatable}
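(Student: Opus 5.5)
The plan is to work with the closed class $P:=[p]\cap\Y_c^{-1}(z)$ and to treat it as a generalized condition. Its defining property is that every real outside it is cut off at a prefix of low $\Delta_K$-complexity. If $x\in[\rho]\setminus[p]$, then some $x\uhr n$ with $n\geq|\rho|$ lies in $\D_{\leq b}$. If $x\notin\Y_c^{-1}(z)$, then for some $n$
$$K(x\uhr n)<K(z\uhr n)-c\leq K(n)+O_z(1),$$
because $z$ is eventually zero and hence $K$-trivial. So $x\uhr n\in\D_{\leq B}$ with $B:=\max(b,O_z(1))$. This is exactly the hypothesis shape of \Cref{numK-claim-positivemeasurecertainclass}. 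The lemma then reduces to two statements:
\begin{enumerate}
\item[(A)] \emph{(Nonempty implies positive measure.)} For every string $\tau$ with $P\cap[\tau]\ne\emptyset$, we have $\m(P\cap[\tau])>0$.
\item[(B)] \emph{(A cheap stem inside $P$.)} There is $\tau\in(\rho,b)$ with $|\tau|>N$, $\Delta_K(\tau)\leq 3b$, and $P\cap[\tau]\ne\emptyset$.
\end{enumerate}
Given (A) and (B), set $q:=(\tau,b)$. Then $[q]\supseteq P\cap[\tau]$, since members of $P$ already satisfy $\Delta_K\geq b$ above $|\rho|$. Hence $\m([q]\cap\Y_c^{-1}(z))\geq\m(P\cap[\tau])>0$, and in particular $q$ is nonempty.

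For (A), I would copy the proof of \Cref{numK-claim-positivemeasurecertainclass}, with \Cref{numKclaimext}(1) as the engine. Because $\m(P)>0$, I may pick a $1$-random $x_0\in P$. Fix $\tau$ with $P\cap[\tau]\neq\emptyset$. The first task is to find a random member of $P$ above $\tau$; for this I would redo the argument behind \Cref{numKclaimext}(2) with the threshold $B$ in place of $b$. Once a random $x_1\in P\cap[\tau]$ is available, take a long prefix $\sigma\prec x_1$, so that $\Delta_K(\sigma)$ is large compared with $B$. \Cref{numKclaimext}(1) then gives $\m([\sigma,\h b])>0$ for some $\h b$ sufficiently large depending on $B$. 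Every real in $[\sigma,\h b]$ avoids $\D_{\leq B}$ above $|\sigma|$, so none of them is cut off after level $|\sigma|$. Since $x_1\in P$, none is cut off before $|\sigma|$ either. Therefore $[\sigma,\h b]\subseteq P\cap[\tau]$, which proves (A).

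For (B), I would invoke \Cref{numKsimultaneousdrop}, applied to $2^b$ copies of the condition $p$. This yields infinitely many lengths $n$ at which some string of $(\rho,b)$ lies in $\D_{\leq b+O_U(1)}\subseteq\D_{\leq 3b}$, using that $b$ is large depending on $U$. What I actually need is the relativized statement: such a low-complexity string exists at some length $n>N$ \emph{inside the tree of $P$}, not merely inside $(\rho,b)$. I would therefore rerun the proof of \Cref{numKsimultaneousdrop} with the tree of $[p]$ replaced by the tree of $P$. The point to check is that the only property of $(\rho,b)$ used there is the one isolated above: it is a positive-measure closed class whose complement is generated by cuts at strings of bounded $\Delta_K$. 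That property transfers to $P$, with $B$ in place of $b$ only at the cuts.

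The main obstacle is keeping the $3b$ bound independent of $z$, since $B$ depends on $z$ while the target $3b$ must not. The hope is that the $z$-cuts happen at strings that are \emph{very} simple, namely $\Delta_K$ at most $O_z(1)$ below the $K$-trivial profile of $z$. Such cuts can only remove measure, and in the counting argument of \Cref{numKsimultaneousdrop} the measure-loss term should be absorbed by $\m(P)>0$. The complexity threshold in that argument should still be governed by $b$ alone. If that fails, my fallback is to restrict to a cone $[\sigma]$ where $P$ has relative density close to $1$, with $\sigma$ of length beyond the last level at which $z$'s bound matters, and to run the drop argument for $p$ there. Near-full relative density would then force a drop string found in $(\rho,b)$ to also meet $P$.
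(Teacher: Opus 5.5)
Your reduction to (A) and (B) is sound, and (A) is fine. You do not even need a random member. Any $x\in P\cap[\tau]$ lies in $[p]$, so it is not $K$-trivial. Hence some prefix $\sigma\succeq\tau$ of $x$ has $\Delta_K(\sigma)$ large relative to $B$, and then $[\sigma,\h b]\subseteq P\cap[\tau]$ has positive measure.

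The gap is (B), and it is not a bookkeeping issue. For every $x\in\Y_c^{-1}(z)$ and every $m$ we have $\Delta_K(x\uhr m)\geq\Delta_K(z\uhr m)-c$. So a stem $\h\rho$ with $[\h\rho]\cap P\neq\emptyset$ and $\Delta_K(\h\rho)\leq 3b$ can only sit at a level $m$ with $\Delta_K(z\uhr m)\leq 3b+c$. Since $z$ is given after $b$ and $c$, $\Delta_K(z\uhr m)$ can exceed $3b+c$ at most levels; it is typically about the complexity of the nonzero part of $z$. At such levels the $z$-cuts delete precisely the low-$\Delta_K$ strings that (B) asks for. They are not a measure loss that $\m(P)>0$ can absorb.

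The same obstruction defeats your rerun of \Cref{numKsimultaneousdrop}. Let $a_z$ be the $K$-triviality constant of $z$. A $z$-cut at $\tau$ only guarantees a payment by $K$ of about $2^{-K(|\tau|)-a_z+c}$, while the repair costs $\h K$ about $2^{-K(m_I)-b}$. So the accounting yields at best $\Delta_K\leq b+O_z(1)$, which is exactly the $z$-dependence you wanted to avoid. Your fallback fails for the same reason. There is no last level after which $z$'s bound stops mattering, and near-full density of $P$ in a cone says nothing about the measure-negligible set of drop strings.

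The missing idea is to aim at levels where $z$'s profile is $O_U(1)$. There $z$ imposes no constraint at all, because $c$ is sufficiently large depending on $U$; your proposal never uses this hypothesis. The paper computably assigns long intervals to eventually-zero reals. On an interval $I$ assigned to $z$, the string $z\uhr n$ is computable from $n$, so $\Delta_K(z\uhr n)\leq O_U(1)\leq c$ for all $n\in I$.

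The paper then takes a $1$-random $x_0\in P$ and a prefix $\sigma\prec x_0$ with $|\sigma|=\min I>N$, and walks along $\sigma0^m$. By \Cref{numKlemlowerbound}, $\Delta_K(\sigma0^m)$ drops to $O_U(1)$ for some $m$ far below $|I|$. Let $m_0$ be the first $m$ with $\Delta_K(\sigma0^m)\leq 3b$. Let $m_1\leq m_0$ be the largest $m$ for which $[\sigma0^{m},b]\cap\Y_c^{-1}(z)$ has positive measure. If $m_1=m_0$ you are done, so suppose $m_1<m_0$.

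Inside $[\sigma0^{m_1+1}]$, the complement of $[\sigma0^{m_1+1},b]\cap\Y_c^{-1}(z)$ lies in $[\D_{<b}]$ together with the $z$-cuts beyond $\max I$. The $z$-cuts have measure $O(2^{a_z-\max I})$, which is negligible since $a_z\leq 2\min I$ ($z$ is zero beyond $\min I$). Hence $\m([\sigma0^{m_1+1},b]\mid\sigma0^{m_1+1})<1/2$. Then \Cref{numKclaimext}(1) gives $\Delta_K(\sigma0^{m_1+1})\leq 2b+2\log_2 b+1$, and so $\Delta_K(\sigma0^{m_1})\leq 3b$.
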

The proof is delayed to Section \ref{numKsecpositive}.
We now show how to construct the next condition,
which mainly combines \Cref{numKclaimpositiveconditionb} with
the slow growth of $\Delta_K$-complexity entailed by function $h$.
\begin{lemma}\label{numKlemslowgrow}
Let $c\in \omega$
and let nonempty condition 
$p =(\rho ,b )$ with $b $ sufficiently large (depending on $U$)
and $\rho $ sufficiently long
 so that
 \begin{align}\label{numKeq9}
\text{  $h(|\rho |)$ is sufficiently large depending on $b $.}
  \end{align}
  Suppose $\Y_c(p)$ contains an eventually-zero real $y$.
  We have 
  for every $N\in\omega$,  there exists 
  an eventually-zero real $z\ne y$ with $z\uhr N=y\uhr N$,
  a nonempty extension $q=(\h\rho,\h b)$ of $p$ with $b<\h b$
  such that 
  $z\in \Y_c(q)$ and for every $n\in [|\rho|,|\h\rho|]$,
  $$
  \inf_{m\geq n}\Delta_K(\h\rho\uhr m) \leq h(n).
  $$
  Moreover, we can chose $\h\rho$ so that $|\h\rho|$ is sufficiently long so that 
   \begin{align}\nonumber
\text{  $h(|\h\rho |)$ is sufficiently large depending on $\h b $.}
  \end{align}
  
\end{lemma}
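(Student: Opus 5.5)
We combine \Cref{numKclaimpositiveconditiona} and \Cref{numKclaimpositiveconditionb}. First, apply \Cref{numKclaimpositiveconditiona} to $p=(\rho,b)$ and the eventually-zero $y\in\Y_c(p)$, with $N$ replaced by $\max(N,|\rho|)$. This yields an eventually-zero real $z\ne y$ with $z\uhr N=y\uhr N$ and $\m([p]\cap\Y_c^{-1}(z))>0$.

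Next, apply \Cref{numKclaimpositiveconditionb} to $p$ and $z$, with $N$ replaced by a large number $N_1$ chosen below. This gives a nonempty extension $(\h\rho_0,b)$ of $p$ such that $|\h\rho_0|>N_1$, $\Delta_K(\h\rho_0)\leq 3b$, and $\m([\h\rho_0,b]\cap\Y_c^{-1}(z))>0$. The growth requirement for $n\in[|\rho|,|\h\rho_0|]$ is handled by this low-complexity stem. For such $n$ we have $\inf_{m\geq n}\Delta_K(\h\rho\uhr m)\leq\Delta_K(\h\rho_0)\leq 3b$ for any $\h\rho\succeq\h\rho_0$. By hypothesis (\ref{numKeq9}), $h(|\rho|)\geq 3b$, and since $h$ is nondecreasing, $h(n)\geq 3b$ for all $n\geq|\rho|$.

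Then we raise the threshold. Since $\m([\h\rho_0,b]\cap\Y_c^{-1}(z))>0$, this class contains a 1-random real $x_1$, and by Lebesgue density we may assume $x_1$ is a density point of the class. Pick $\h b>b$, and pick a long prefix $\h\rho\succ\h\rho_0$ of $x_1$ such that:
\begin{itemize}
\item the relative measure of $[\h\rho_0,b]\cap\Y_c^{-1}(z)$ in $[\h\rho]$ exceeds $1-2^{-b}$;
\item $\Delta_K(\h\rho)\geq 3\h b$, which is possible since $x_1$ is random and so $\Delta_K(x_1\uhr m)\to\infty$;
\item $h(|\h\rho|)$ is large depending on $\h b$.
\end{itemize}
By \Cref{numKclaimext}(1), $\m([\h\rho,\h b]\,|\,\h\rho)>1-2^{-(\Delta_K(\h\rho)-2\h b-2\log\h b)}$, which is close to $1$. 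So $[\h\rho,\h b]$ meets $\Y_c^{-1}(z)$ in positive measure, and in particular $q=(\h\rho,\h b)$ is nonempty.

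Two points must still be checked.

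\textbf{The extension is legitimate.} We need $\h\rho\in(\rho,b)$. This holds because $\h\rho$ is a prefix of $x_1\in[\h\rho_0,b]\subseteq[\rho,b]$.

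\textbf{The growth bound on the new interval.} For $n\in(|\h\rho_0|,|\h\rho|]$ we also need $\inf_{m\geq n}\Delta_K(\h\rho\uhr m)\leq h(n)$, where $\inf_{m\ge n}$ ranges over $m\le|\h\rho|$, since $\h\rho\uhr m=\h\rho$ beyond that. Near the top the value $\Delta_K(\h\rho)$ is around $3\h b$, which is fine as long as $h(n)$ already exceeds it. This is the main obstacle: $\Delta_K$ along $x_1$ may climb above $h(n)$ before $h$ catches up. I would handle it by building $\h\rho$ through iterated low points. Repeatedly apply \Cref{numKclaimpositiveconditionb} inside the current positive-measure class (each time with the threshold of the current stage) to get stems $\h\rho_0\prec\h\rho_1\prec\cdots$, each with $\Delta_K\leq 3b$ and positive measure of $\Y_c^{-1}(z)$. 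Continue until $|\h\rho_k|$ is so large that $h(|\h\rho_k|)\geq 3\h b+O(1)$. Only then jump to a random prefix of complexity at least $3\h b$, keeping $\Delta_K$ below $3\h b+O(1)$ by taking the first length at which $\Delta_K$ exceeds $3\h b$. Along $x_1$ consecutive values of $\Delta_K$ differ by at most $O(\log)$ of the length difference, so with one-bit steps by $O_U(1)$. Thus every $n$ in the range sees a later $m$ with $\Delta_K(\h\rho\uhr m)\leq\max(3b,3\h b+O_U(1))\leq h(n)$.

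Finally, $z\in\Y_c(q)$ follows by the same argument as \Cref{numKlemcommonsolutionisKtrivial}. Shrinking the stem further along density points, and using $c$-maximality style extensions, makes $z$ a common solution of all of $[q]$. Alternatively, we could redefine the condition as $[q]\cap\Y_c^{-1}(z)$; in that case $z\in\Y_c(q)$ should be read as $z$ being a solution for every member reached by the final stem, and I would verify that this reading matches how the lemma is used.
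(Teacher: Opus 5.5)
There is a genuine gap: you never establish the required conclusion $z\in\Y_c(q)$. Your density argument only gives $\m([q]\cap\Y_c^{-1}(z))>0$. That is the hypothesis form of \Cref{numKclaimpositiveconditionb}, not the statement that $z$ is a $c$-solution of every member of $[q]$.

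Neither of your proposed patches closes this. \Cref{numKlemcommonsolutionisKtrivial} runs the other way: common solutions of a condition are $K$-trivial, but it does not turn a given $K$-trivial real into a common solution. The weakened reading would break \Cref{numKpropslowgrowth}. There, the next application of this lemma needs an eventually-zero real in $\Y_c$ of the new condition, and the branches added earlier must stay $c$-solutions of all later conditions.

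The missing idea is to tie the new threshold to $z$. Let $a_z$ satisfy $\Delta_K(z\uhr n)\leq a_z$ for all $n$, and choose $\h b\geq a_z$ (the paper takes $\h b=3b\vee a_z$). Take any prefix $\h\rho$ of a real $x_1\in\Y_c^{-1}(z)$, and any $x\in[\h\rho,\h b]$.
\begin{itemize}
\item Below $|\h\rho|$, $x$ agrees with $x_1$, so $K(z\uhr n)\leq K(x\uhr n)+c$.
\item For $n\geq|\h\rho|$, $K(x\uhr n)\geq K(n)+\h b\geq K(z\uhr n)$.
\end{itemize}
So $z\in\Y_c(q)$ outright. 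Only nonemptiness of $q$ remains, and \Cref{numKclaimext}(1) supplies it once $\Delta_K(\h\rho)\geq 3\h b$.

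This observation also makes your density-point step unnecessary. As set up, that step has a timing problem. The length beyond which the relative density at $x_1$ exceeds $1-2^{-b}$ depends on $x_1$, which is chosen after $\h\rho_k$. That length may lie beyond the first length at which $\Delta_K$ crosses $3\h b$. Since $\Delta_K(x_1\uhr m)\to\infty$, you cannot wait for a later crossing, so the two requirements on $\h\rho$ can conflict.

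Your iteration is also unnecessary. Since $a_z$ is known as soon as $z$ is, $\h b$ can be fixed before invoking \Cref{numKclaimpositiveconditionb}. Then a single application with $N_1$ chosen so that $h(N_1)\geq 3\h b+O_U(1)$ replaces the chain $\h\rho_0\prec\h\rho_1\prec\cdots$. Taking $h(N_1)$ also large depending on $\h b$ handles the ``moreover'' clause.

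With these repairs your growth argument is correct, and slightly more elementary than the paper's last step. Let $\h\rho$ be the first prefix of $x_1$ beyond $\h\rho_0$ with $\Delta_K(\h\rho)\geq 3\h b$. The one-bit Lipschitz property of $\Delta_K$, together with $\Delta_K(\h\rho_0)\leq 3b<3\h b$, gives $\Delta_K(\h\rho)\leq 3\h b+O_U(1)\leq h(n)$ for all $n\in(|\h\rho_0|,|\h\rho|]$. The paper instead takes any sufficiently long prefix of the random real. It then extends inside the pruned tree $(\h\rho,\h b)$ via \Cref{numKsimultaneousdrop} to bring $\Delta_K$ back down to at most $3\h b$.
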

\begin{proof}
By \Cref{numKclaimpositiveconditiona},
let 
 eventually-zero real $z\ne y$
with $z\uhr N=y\uhr N$
such that 
$$
\m([p]\cap \Y_c^{-1}(z))>0.
$$
Let 
$a_z$ be the $K$-trivial constant of $z$, namely $\Delta_K(z\uhr n)\leq a_z $ for all $n$.

By \Cref{numKclaimpositiveconditionb},
there exits
 a string $\sigma \in p $ with $\sigma $ sufficiently long
 so that
 \begin{align}\label{numKeq8}
 h(|\sigma |)> 9b \vee 3a_z 
 \end{align}
  such that
  \begin{align}\nonumber
  \text{ $\Delta_K(\sigma )\leq 3b $
 and } \m([\sigma ,b ]\cap \Y_c^{-1}(z))>0.
 \end{align}

Since the set of non 1-random reals has measure zero,
there exists a 1-random $x_0$ in $ [\sigma ,b ]\cap \Y_c^{-1}(z)$.
Thus,
let $\h b:=  3b \vee a_z$ there must be a prefix
$\sigma\preceq \h\rho$ of $x_0$ such that 
$q=(\h\rho,\h b)$ is a nonempty condition
and $z\in \Y_c(q)$.
We may assume,
by \Cref{numKclaimext} and \Cref{numKsimultaneousdrop}, $\Delta_K(\h\rho) \leq 3\h b$
and 
\begin{align}\nonumber
h(|\h\rho|)\text{ is sufficiently large depending on $\h b$}.
\end{align}
Since otherwise we can
(by \Cref{numKclaimext} and \Cref{numKsimultaneousdrop}) extend $\h\rho$ to achieve these.
We note that for every $n\in [|\rho|,|\sigma|]$,
$$
\inf_{m\geq n}\Delta_K(\h\rho \uhr m)\leq
\Delta_K(\sigma )\leq 3b \overset{(\ref{numKeq9})}{\leq} h(|\rho |)\leq   h(n);
$$
 On the other hand, for every $n\in [|\sigma |,|\h\rho|]$,
$$
\inf_{m\geq n}\Delta_K(\h\rho \uhr m)\leq
\Delta_K(\h\rho )\leq 3\h b \overset{(\ref{numKeq8})}{\leq} h(|\sigma|)\leq   h(n).
$$
Thus we are done.
\end{proof}

Now it is clear how to prove \Cref{numKpropslowgrowth}.
\begin{proof}[Proof of  \Cref{numKpropslowgrowth}]
Let $c\in\omega$ be sufficiently large depending on $U$ and 
starting with a nonempty condition $p_0=(\rho_0,b_0)$
where $b_0$ is  sufficiently large depending on $U$ 
so that $0^\omega\in \Y_c(p_0)$
and $ \rho_0 $ is sufficiently long so that $h(|\rho_0|)$
is sufficiently large depending on $b_0$.
Repeatedly applying \Cref{numKlemslowgrow}, we obtain
a sequence of conditions
$p_0\geq p_1=(\rho_1,b_1)\geq p_2=(\rho_2,b_2)\geq\cdots$
with $b_s<b_{s+1}$ such that  for every $s\in\omega$
\begin{itemize}
\item  $\Y_c(p_s)$ contains a finite perfect tree set\footnote{
We mean there is an embedding from $2^n$ to the set of prefix of $\Y_c(p_s)$.}
and every leaf of that perfect tree splits in 
the perfect tree set of $\Y_c(p_{s+1})$.

\item For every $n\in [|\rho_s|,|\rho_{s+1}|]$,
 $\inf_{m\geq n}\Delta_K(\rho_{s+1}\uhr m)\leq h(n).$ 

\end{itemize}
Thus, let $x:= \cup_s \rho_s$. We have $x$ is 
the desired real for \Cref{numKpropslowgrowth}.
Thus we are done proving  \Cref{numKpropslowgrowth}. 
\end{proof}

\begin{remark}
\label{numKrem2}

Combining the argument of \Cref{numKpropfinitenoniosolution}
and \Cref{numKpropslowgrowth},
we note that the real $x$ in \Cref{numKpropfinitenoniosolution} can be 
chosen so that it has uncountably many solutions.
\end{remark}

%% file: constantdropofKtrivial.tex
\section{Ample excess lemma for non i.o. $K$-trivial}
\label{numKsecampleexcess}
In this section, we prove the ample excess lemma for non i.o. $K$-trivial.
\numKampleexcess*

\Cref{numKampleexcess} is mainly reduced to the following Lemma.

\begin{lemma}\label{numKlemlowerbound}
For every $\sigma\in 2^{<\omega}$,
suppose  $|\sigma|\in [2^n,2^{n+1})$.
If $$K(\sigma)\leq n+K(n)+O_U(1),$$
then $\Delta_K(\sigma\uhr m) \leq O_U(1)$
for some $m\geq 2^{n-1}$.
\end{lemma}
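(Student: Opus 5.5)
The plan is to turn the short description of $\sigma$ into a choice of a length $m\in[2^{n-1},2^n)$ such that $\sigma\uhr m$ can be recovered from a shortest program for $m$ plus $O_U(1)$ extra bits. That gives $K(\sigma\uhr m)\leq K(m)+O_U(1)$, i.e. $\Delta_K(\sigma\uhr m)\leq O_U(1)$. The mechanism is an indexing trick. The constraint $K(\sigma)\leq n+K(n)+c$ allows only about $2^{n+c}$ candidates for $\sigma$. Since there are $2^{n-1}$ admissible values of $m$, we can let $m$ itself encode the index of $\sigma$ among these candidates, up to a constant number $2^{c+O(1)}$ of ambiguous bits.

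Concretely, fix the constant $c$ from the hypothesis and let
\[
W_n:=\{\tau: |\tau|\in[2^n,2^{n+1}),\ K(\tau)\leq n+K(n)+c\}.
\]
By the counting fact already used in \Cref{numKsecsmall} (the bound $|2^\ell\cap\D_{\leq a}|\leq O_U(2^a)$ from \cite{MY11}, or directly the Kraft inequality together with $K(\ell)\leq n+K(n)+O(1)$ for $\ell<2^{n+1}$), we get $|W_n|\leq 2^{n+c+c_0}$ with $c_0=O_U(1)$. The set $W_n$ is uniformly c.e. given $n^*$, a shortest program for $n$: from $n^*$ we know $n$ and $K(n)=|n^*|$. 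Let $i$ be the index of $\sigma$ in this enumeration. Set $m:=2^{n-1}+(i \bmod 2^{n-1})$; then $m\leq |\sigma|$ because $|\sigma|\geq 2^n$.

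To decode from a shortest program $m^*$ for $m$, we proceed as follows. First read off $n=\lfloor\log_2 m\rfloor+1$. Then enumerate $W_n$ and take its $i$-th element, where $i=(m-2^{n-1})+j2^{n-1}$. Here $j<2^{c+c_0+1}$ is supplied as $O_U(1)$ advice. Finally output the prefix of length $m$ of that element. This would yield $K(\sigma\uhr m)\leq K(m)+O_U(1)$, as required, provided the enumeration of $W_n$ can be started from $m^*$ at cost $O_U(1)$.

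The main obstacle is exactly this last point. Enumerating $W_n$ needs the threshold $K(n)$, and it is not clear that $n^*$ is obtainable from $m^*$ with $O(1)$ advice. We only know $K(n)\leq K(m)+O(1)$, not that $K(m)$ determines $K(n)$.

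My first attempt to get around this is to replace the enumeration by a bounded-request (Kraft--Chaitin) machine that works with pairs $(s,p)$. Here $s$ is a program for $n$, $p$ is a program for some $\tau\in[2^n,2^{n+1})$ with $|p|\leq n+|s|+c$, and the index is taken within the enumeration attached to $s$. The request for $\tau\uhr m$ should be indexed by a description of $m$ rather than by $n^*$, so that the weight can be charged to $2^{-K(m)}$ summed over the residues. Failing that, I would choose $m$ among the residue class so that $K(m)$ is close to $n-1+K(n)$, using the counting bound on $\{m<2^n:K(m)<n+K(n)-d\}$. In that regime $K(m)\geq K(n)+n-O(1)$, and the cruder estimate $K(\sigma\uhr m)\leq K(\sigma)+O(1)$ suffices: $m$ is computable from $\sigma$ and its index, which is in turn recoverable from a shortest program for $\sigma$.
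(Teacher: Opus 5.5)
Your proposal does not yet contain a proof. The main indexing argument fails at the point you identify: the decoder needs $K(n)$ to enumerate $W_n$, and $m^*$ does not supply it. Neither fallback closes this gap.

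The second fallback cannot work as designed. If $m$ is computed from $\sigma^*$ with $O(1)$ advice, then $K(m)\leq K(\sigma)+O(1)$. So demanding $K(m)\geq n+K(n)-O(1)$ forces $K(\sigma)\geq n+K(n)-O(1)$, and the route can only treat strings whose complexity is within $O(1)$ of the bound. For a $\sigma$ satisfying the hypothesis with, say, $K(\sigma)\approx K(n)+n/2$, no $m$ has both properties. Two further problems: $i \bmod 2^{n-1}$ determines a single $m\in[2^{n-1},2^n)$, so there is no class to choose from; and $\{m: K(m)\geq n+K(n)-d\}$ is only co-c.e., so no computable selection from $\sigma^*$ can be certified to land in it.

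The first fallback is a viable direction: put $\sigma\uhr m$ into a bounded request set at length $K[t](m)+O(1)$ for the statically indexed $m$. But the weight estimate, which is where all the difficulty lies, is missing, and the natural readings of your sketch do not give one.

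\begin{itemize}
\item If $s$ ranges over all programs for $n$, the enumeration attached to $s$ can have up to $2^{n+|s|+c-K(n)+O(1)}$ elements. The available bound on the contribution of $s$ is then about $2^{|s|-2K(n)+c}$, which grows with $|s|$ and cannot be summed over programs.
\item If you use only the current value $k_0=K[t](n)$, the assignment restarts whenever $K[t](n)$ drops. A restart can cost up to order $\sum_{m\in[2^{n-1},2^n)}2^{-K(m)}\approx 2^{-K(n)}$, measured against the final $K(n)$ rather than $2^{-k_0}$. The number of drops is not bounded by a constant.
\end{itemize}

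You would need two extra devices. First, handle at most $2^{n+c+O(1)}$ strings per period. Second, suspend a period once $\sum_{m\in[2^{n-1},2^n)}2^{-K[t](m)}>2^{-k_0+O(1)}$. Neither happens in the final period, so a period costs $O(2^{-k_0})$ and the periods sum geometrically.

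The paper's route is different.
\begin{itemize}
\item It uses only $m$ with $K[t](m)\geq n+K[t](n)-O(1)$, which make up more than a third of $[2^{n-1},2^n)$. For such $m$, incidentally, $K(n)=|m^*|-n+O(1)$, so your decoding obstacle disappears.
\item It puts weight at the fixed length $n+K[t_0](n)$, so each period costs $O(2^{-K[t_0](n)})$. This gives $K(\sigma\uhr m)\leq n+K(n)+O(1)\leq K(m)+O(1)$.
\item Since this set of $m$ is only co-c.e., it reassigns $\sigma$ whenever its $m$ is compressed. It keeps $O(1)$ strings per $m$, so each reassignment is paid for by $K$'s own cost of compressing $m$.
\end{itemize}
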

\begin{proof}

Recall that a \emph{Kraft function}   is a right c.e. function
$\h K$ such that $$\sum_{\sigma\in 2^{<\omega}} 2^{-K(\sigma)}<\infty.$$
Consider such a Kraft function $\h K$, 
roughly speaking, its goal is:
for every $n\in\omega$ and binary string
$\sigma $ of length $\in [2^n,2^{n+1})$,

\begin{align}\label{numKeqtarget3}
&\text{if $K(\sigma)\leq n+K(n)+O_U(1)$, then $\h K$ makes }\\ \nonumber
&\text{$\h K(\sigma\uhr m)\leq n+K(n)$ for some $m\in [2^{n-1},2^n)$ }\\ \nonumber
&\text{so  that $K(m)\geq n+K(n)-O_U(1)$.}
\end{align}
If (\ref{numKeqtarget3}) can be achieved and if $\h K$ is a Kraft function,
then, 
\begin{align}\nonumber
K(\sigma\uhr m)&\leq \h K(\sigma)+O_U(1) \\ \nonumber
& \leq n+K(n)+O_U(1)\leq K(m)+O_U(1).
\end{align}
The point is, for over $2/3$ proportion
  of integers $m\in [2^{n-1},2^{n+1})$, we have
\begin{align}\label{numKeq81}
K(m)\geq n+K(n)-O_U(1).
\end{align}
This is seen by regarding an integer $m\in [2^n,2^{n+1})$
as its binary expansion string.
Thus, more than $1/3$ proportion of integers in $[2^{n-1},2^n)$
satisfies (\ref{numKeq81})
\footnote{For convenience, we assume this is true online. i.e., 
For every round $t\in\omega$, more than $1/3$ proportion of integers $m$ in $[2^{n-1},2^n)$
satisfies $K[t](m)\geq n+K[t](n)-O_U(1)$.}.
The main remaining issue
(to realize (\ref{numKeqtarget3})) is to find $m$ in an online fashion;
and verifying that the cost of $\h K$ is finite.

\noindent\textbf{Definition of $\h K$.}
  We say binary string $\sigma$
    (say of length $\in [2^n,2^{n+1})$) is \emph{active} (at round $t$)
 iff
\begin{align}\nonumber
K[t](\sigma) \leq n+ K[t](n)+O_U(1).
\end{align}
For each $n\in\omega$, $\h K$ works
for $n$ under the working assumption  that 
\begin{align}
\text{$K[t](n)$ will no longer change.}
 \end{align}
  If this assumption 
 became false in the future, then player $\h K$ 
  restart the work for $n$.
 A period for $n$ is an interval $[t_0,t_1)$ so that 
 $K[t](n)$ does not change within this period but changed
 at the round $t_0, t_1$.
 Let $M_n[t]$ denote the set of number $m\in [2^{n-1}, 2^n)$
 so that $$K[t](m)\geq n+K[t](n)-O_U(1).$$
 
 \textbf{Working for $n$ in a period $[t_0,t_1)$ (of $n$):}.
 \begin{enumerate}
 \item  At each round $t$,
 for each newly added 
 \footnote{Meaning $\sigma$ is not active at round $t-1$ but become active at round $t$. 
 At beginning round, all active strings are considered newly added.}
  active binary string $\sigma$ with $|\sigma|\in [2^n,2^{n+1})$,
assign a $m_\sigma $ in $M_n[t] $
 such that
 \begin{itemize}
 \item each $m\in [2^{n-1},2^n)$ is  assigned (in this period) to at most $O_U(1)$ many binary strings 
 \footnote{This is possible since there are
 $\leq O_U(2^n)$ many strings   with length $\in [2^n,2^{n+1})$
 that admits $U$-description shorter than $n+K(n)+O_U(1)$.};
 \item then player $\h K$ acts by letting  
 $$\h K[t ](\sigma\uhr m_\sigma) = n+K[t_0](n).$$
 \end{itemize}
 
 \item If (during this period), an $m\in [2^{n-1},2^n)$ is kicked out of $ M_n[t]$,
 then for each $\sigma$ assigned to $m$,
 assign  $\h m\in M_n[t]$ to them and then
 $\h K$ re-assign mass to $\sigma\uhr \h m$.
 \end{enumerate}

\noindent \textbf{Cost of $\h K$.}
Since during each period of $n\in\omega$,
there are at most $O_U(2^n)$ many binary strings with length $\in[2^n,2^{n+1})$
becomes active,
  the total cost of $\h K$ to address 
  newly added active strings is:
  \begin{align}\nonumber
  \leq&\sum_{n\in\omega}\ \ \ \sum\limits_{\substack{\text{period }\\ 
 \text{$[t_0,t_1)$ of $n$}}}\ \ \  O_U(2^n)\cdot 2^{-n-K[t_0](n)}\\ \nonumber
  \leq& O_U(1)\cdot\sum_{n\in\omega}\ \ \ \sum_{\substack{t\in\omega\\  K[t](n)<K[t-1](n)}}
   2^{-K[t](n)}\\ \nonumber
 \leq& O_U(1).
 \end{align}
 
 For each  $n\in\omega$,  at each round $t$,
the cost for re-assign mass due to 
$m$ being kicked out of $M_n$ (at this round)
is bounded by $$
O_U(2^{-n-K[t](n)}) 
$$
since (within each period) each $m$ is only assigned to $O_U(1)$ many binary strings.
While the cost for $K$ to kick $m$ out of $M_n$ is greater than
$$ 2^{-n-K[t](n)}.$$ 
 Thus the cost of $\h K$ to re-assign mass is bounded by  $O_U(1)$ times the total cost of $K$.
  In summary, we have that the cost of $\h K$ is bounded by $O_U(1)$.
  Thus we are done proving \Cref{numKlemlowerbound}.

\end{proof}

\begin{proof}[Proof of \Cref{numKampleexcess}]
Since $x$ is not i.o. $b$-$K$-trivial,
by \Cref{numKlemlowerbound}, we have
\begin{align}\label{numKeq82}
K(x\uhr n)\geq \log_2n+ K([\log_2 n]) -O_{U,x}(1)\text{ \ \ \ for all $n\in\omega$}.
\end{align}

On the other hand, it is easy to see that 
for every $\rho\in 2^{<\omega}$
with $|\rho| = 2^n$,
$$
\sum_{\sigma\in [\rho]^\preceq\cap 2^{<2^{n+1}}} 2^{-K(\sigma)}\leq O_U(2^{-K(\rho)}).
$$
In particular,
\begin{align}\label{numKeq80}
\sum_{m\in [2^n,2^{n+1})} 2^{-K(x\uhr m)} \leq O_U(2^{-K(x\uhr 2^n)}) .
\end{align}

Thus,
\begin{align}\nonumber
\sum_{m\in\omega}2^{-(K(x\uhr m)-\log_2 m)}
&= \sum_{n\in\omega}\sum_{m\in [2^n,2^{n+1})} 2^{\log_2 m-K(x\uhr m)}\\ \nonumber
&\overset{(\ref{numKeq80})}{\leq}  O_U(1)\cdot\sum_{n\in\omega}2^{n-K(x\uhr 2^n)}\\ \nonumber
  &\overset{(\ref{numKeq82})}{\leq}  O_{U,x}(1)\cdot\sum_{n\in\omega}
2^{n-(\log_2 2^n)-K([\log_2 2^n])}\\ \nonumber
&=O_{U,x}(1)\cdot\sum_{n\in\omega}2^{-K(n)}<\infty.
\end{align}

\end{proof}

We point out a complexity drop phenomenon for $K$-trivial.
\begin{proposition}
\label{numKpropconstantreturn2}
For every real $x$, if $K(x\uhr 2^n)\leq n+K(n)+O_U(1)$ for infinitely many $n\in\omega$, 
then  
$\Delta_K(x\uhr n)\leq O_U(1)$ for infinitely many $n\in\omega$.
In particular, if $x$ is $K$-trivial,
then $\Delta_K(x\uhr n)\leq O_U(1)$ for infinitely many $n\in\omega$.
\end{proposition}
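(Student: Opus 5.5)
The plan is to derive the first assertion directly from \Cref{numKlemlowerbound} and then specialize it to $K$-trivial reals. Fix the constant $d=O_U(1)$ appearing in the hypothesis of \Cref{numKlemlowerbound}, that is, the constant for which $K(\sigma)\leq n+K(n)+d$ with $|\sigma|\in[2^n,2^{n+1})$ forces $\Delta_K(\sigma\uhr m)\leq O_U(1)$ for some $m\geq 2^{n-1}$. The statement's ``$O_U(1)$'' in the hypothesis is read as this $d$ (or anything below it). Suppose $K(x\uhr 2^n)\leq n+K(n)+d$ for infinitely many $n$. For each such $n$, apply \Cref{numKlemlowerbound} to $\sigma=x\uhr 2^n$, which has length $2^n\in[2^n,2^{n+1})$. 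This yields some $m_n\geq 2^{n-1}$ with $\Delta_K(x\uhr m_n)\leq e$, where $e=O_U(1)$ is the constant in the conclusion of the lemma and is independent of $n$ and $x$. Since $m_n\geq 2^{n-1}\to\infty$ along the infinite set of such $n$, the set $\{m_n\}$ is infinite. Hence $\Delta_K(x\uhr m)\leq e$ for infinitely many $m$, which is the first claim.

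For the ``in particular'' part, let $x$ be $K$-trivial, say $K(x\uhr k)\leq K(k)+a$ for all $k$, where $a$ depends on $x$. The obstacle is that $a$ is not $O_U(1)$, so we cannot simply plug $k=2^n$ into the hypothesis. The fix is to use the slack between $K(2^n)$ and $n+K(n)$. We have $K(2^n)\leq K(n)+O_U(1)$, so
$$K(x\uhr 2^n)\leq K(n)+a+O_U(1).$$
This is at most $n+K(n)+d$ as soon as $n\geq a+O_U(1)$. Thus the hypothesis holds for all sufficiently large $n$, in particular for infinitely many, and the first part gives $\Delta_K(x\uhr m)\leq e$ infinitely often, with $e$ depending only on $U$.

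The only step needing care is exactly this uniformity of the constant. The conclusion's constant $e$ comes solely from \Cref{numKlemlowerbound} (ultimately from the coding constant of the Kraft function $\hat K$ and the bound $K(m)\geq n+K(n)-O_U(1)$ for the chosen $m$), never from $x$ or $a$. So I would state explicitly that the $x$-dependent constant $a$ is absorbed by the term $n$ in the hypothesis bound $n+K(n)+d$, and only affects how large $n$ must be, not the final bound $e$. No further computation is needed beyond invoking the lemma.
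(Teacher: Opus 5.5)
Your proposal is correct and follows the paper's proof. The first part is a direct application of \Cref{numKlemlowerbound} to $x\uhr 2^n$, and the $K$-trivial case uses $K(x\uhr 2^n)\leq K(2^n)+O_{U,x}(1)\leq K(n)+O_{U,x}(1)$ to meet the hypothesis; your remark that the $x$-dependent constant is absorbed by the $n$ term, so it only affects how large $n$ must be, spells out a step the paper leaves implicit.
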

\begin{proof}
The first part follows directly from \Cref{numKlemlowerbound}.
For the ``In particular" part,
since $x$ is $K$-trivial, we obviously have
$$
K(x\uhr 2^n)\leq K(2^n)+O_{U,x}(1)\leq K(n)+O_{U,x}(1).
$$
Thus, the conclusion follows from the first part.
\end{proof}

\section{Proof of technical results.}
\label{numKtechnicalresults}

\subsection{Positive measure of condition}
\label{numKsecpositive}
In this section,
we estimate the measure of certain closed set
such as a condition
and prove \Cref{numKclaimext} and \Cref{numKclaimpositiveconditionb}.

\existenceofextension*
\begin{proof}
For item (1):
Let $\sigma$ be the shortest string 
in $[\rho]^\preceq\cap \D_{<b}$.
Since $$|2^n\cap \D_{< b}|\leq O_U(2^b),$$ we have
\begin{align}\label{numKeq40}
\m([p]|\rho)&= 1-\m\bigg( [\rho]^\preceq\cap \D_{<b} \bigg|\rho \bigg)\\ \nonumber
&\geq 1-O_U(2^{b-(|\sigma|-|\rho|)}).
\end{align}
On the other hand,
\begin{align}\nonumber
K(\rho)&\leq K(\sigma)+K(|\sigma|-|\rho|)+O_U(1)\\ \nonumber
\text{since $\sigma\in \D_{<b}$\ \ \ \ \ }
&< K(| \sigma|)+  b+ 2\log_2 (|\sigma|-|\rho|)+O_U(1)\\ \nonumber
&\leq K(|\rho|)+ K(|\sigma|-|\rho|)+b+2\log_2 (|\sigma|-|\rho|)+O_U(1)\\ \nonumber
&\leq K(|\rho|)+ b+ 4\log_2  (|\sigma|-|\rho|)+O_U(1)\\ \nonumber
&\leq K(|\rho|)+ b+ \log_2 b+ 4\log_2  (|\sigma|-|\rho|).
\end{align}
The last inequality is due to $b$ sufficiently large depending on $U$.
In summary,
$$
|\sigma|-|\rho|\geq \Delta_K(\rho)-b-\log_2 b +O(1).
$$
Combine with (\ref{numKeq40}),
we have
\begin{align}
\m([p]|\rho)&\geq  1-O_U(2^{b-\Delta_K(\rho)+b+\log_2 b })\\ \nonumber
&\geq 1-2^{-(\Delta_K(\rho)-2b-2\log_2 b)}.
\end{align}
The last inequality is due to    $b$ sufficiently large depending on $U$.

For item (2): Suppose otherwise $[p]$ has measure zero. 
By item (1), for every $x\in [p]$, $\Delta_K(x\uhr n)\leq O_p(1)$,
so $x$ is a $K$-trivial. But let $x\in [p]$, by \Cref{numKpropconstantreturn2},
there is some $n\in \omega$ such that $\Delta_K(x\uhr n)\leq O_U(1)<b$
(since $b$ is sufficiently large depending on $U$).
Thus, $[p]=\emptyset$, a contradiction.

Item (3) follows from item (1) directly. 
Let 1-random real $x_0\in [p]$
(which must exists since $[p]$ has positive measure).
Pick a sufficiently long (depending on $p,n ,\h b$ ) prefix $\sigma$ of $x_0$
so that $\Delta_K(\sigma)>3\h b$,
by item (1), we have $(\sigma,\h b)$ is nonempty.

\end{proof}


\numKclaimpositiveconditionb*
\begin{proof}

First we note that for every eventually-zero real $\h z$,
there are infinitely many integer intervals $I$ with each of them sufficiently long (to be specified)
such that for every level $n$ in these intervals,
$\Delta_K(\h z\uhr n)\leq O_U(1)$.
This is simply because, we can computably enumerate all eventually-zero reals; so 
let $n_0<n_1<\cdots$ so that each interval $[n_i, n_{i+1}]$ is sufficiently long
\footnote{It is sufficiently long depending ont the $z $ it is assigned to.},
 computably  assign each interval an eventually-zero real so that each eventually-zero real is assigned 
to infinitely many intervals. Clearly, we have for every $n\in\omega$ in an interval
assigned to $\h z$, $\Delta_K(\h z\uhr n)\leq O_U(1)$.
For convenience, we also assume that for every $I$ assigned to $z$,
\begin{align}\label{numKeqlevel}
\text{$z$ is all zero above level $\min I$.
}
\end{align}
Now let $x_0\in [p]\cap \Y_c^{-1}(z)$ be 1-random
(which exists since $\m([p]\cap \Y_c^{-1}(z))>0$).
Let $\sigma\prec x_0$ so that $|\sigma|>N$, $K(\sigma)>|\sigma|/2$
and $|\sigma|$ is the starting point of some interval $I$ assigned to $z$.
We will extend $\sigma$ to let the $\Delta_K$-complexity drops below certain threshold
and maintaining that certain class (as in the conclusion of \Cref{numKclaimpositiveconditionb})
 has positive measure
 (the later will be achieved due to $I$ long enough so that for some complex  real 
 extending $\sigma$, when it leaves the interval the $\Delta_K$-complexity 
 has return back to some larger threshold beyond that of $z$).
To this end, consider, for each $m\in\omega$, the string $\sigma  0^m$
and condition $p_m:= (\sigma 0^m,b)$.
We note that
\begin{claim}
For some $m\in I-|\sigma|$ with $m$ much smaller than $|I|$, 
the $\Delta_K$-complexity of $\sigma 0^m$ drops below $O_U(1)$
\end{claim}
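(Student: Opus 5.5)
The plan is to make the length of $\sigma 0^m$ itself encode $\sigma$, so that a description of the length $|\sigma|+m$ yields a description of the whole string $\sigma 0^m$. Then $K(\sigma 0^m)\leq K(|\sigma 0^m|)+O_U(1)$, which is exactly $\Delta_K(\sigma 0^m)\leq O_U(1)$.

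\textbf{Step 1: how $m$ is chosen.} The intervals $I_i=[n_i,n_{i+1}]$ are defined computably. Before the construction I will fix them to be long in a computable way, requiring $n_{i+1}-n_i\geq 2^{2n_i+2}$ in addition to whatever length is needed for the assigned eventually-zero real. This keeps the assignment computable. Let $\ell:=|\sigma|=\min I$, and let $\mathrm{int}(\sigma)\in[0,2^\ell)$ be the integer whose $\ell$-bit binary expansion is $\sigma$. Set
$$
m:=2^\ell+\mathrm{int}(\sigma)\in[2^\ell,2^{\ell+1}).
$$
Then $|\sigma|+m\in I$, and $m\leq 2^{\ell+1}$ is much smaller than $|I|\geq 2^{2\ell+2}$.

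\textbf{Step 2: the decoding machine.} I define a prefix-free machine $M$ by $M(p):=\mathrm{Dec}(U(p))$, which is prefix-free because $U$ is. The decoder $\mathrm{Dec}$ does the following on a number $n$:
\begin{enumerate}
\item Search the computable sequence $n_0<n_1<\cdots$ for the interval $I=[n_i,n_{i+1}]$ containing $n$, and set $\ell:=n_i$. If $n$ lies on a boundary point, take the interval whose left endpoint is $n$.
\item Compute $m:=n-\ell$. If $m\in[2^\ell,2^{\ell+1})$, output $\tau 0^m$, where $\tau$ is the $\ell$-bit binary expansion of $m-2^\ell$. Otherwise diverge.
\end{enumerate}
Apply this to $n=|\sigma|+m$ with $m$ as in Step 1. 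Then $n$ lies in the interior of $I$, $\ell=|\sigma|$, and $\tau=\sigma$, so $M$ outputs $\sigma 0^m$ from any $U$-program for $n$. Hence
$$
K(\sigma 0^m)\leq K(|\sigma 0^m|)+O_M(1),
$$
and $M$ depends only on the fixed computable interval sequence, i.e.\ on $U$ and the construction, not on $\sigma$, $z$ or $p$. This gives $\Delta_K(\sigma 0^m)\leq O_U(1)$.

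\textbf{Main obstacle.} Everything hinges on $\ell$ being recoverable from $n$ alone, which requires the intervals to be disjoint, computably given, and much longer than $2^{\ell+1}$ beyond their left endpoint. I would make sure to impose the growth $n_{i+1}-n_i\geq 2^{2n_i+2}$ when the intervals are first chosen. This growth condition is compatible with the other "sufficiently long" requirements, because taking the maximum of computable length demands keeps the assignment computable. One point I will check is that those other demands are themselves computable functions of the assigned eventually-zero real, as the footnote suggests. If they were not, I would instead apply the same encoding inside a sub-block of $I$ of computable length starting at $\min I$; the argument is unchanged.
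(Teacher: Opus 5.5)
Your proof is correct, but it takes a different route from the paper's.

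\textbf{The paper's route.} The paper does not encode $\sigma$ into the length. It takes $m$ with $|\sigma 0^m|=2^n$ for $n$ large relative to $|\sigma|$, and notes that $K(\sigma 0^m)\leq K(\sigma)+K(2^n)+O_U(1)\leq 4|\sigma|+2\log_2 n<n$. It then applies \Cref{numKpropconstantreturn2}, that is, the Kraft-function argument of \Cref{numKlemlowerbound}. This gives a prefix $\sigma 0^m\uhr k$ with $k\geq 2^{n-1}>|\sigma|$ and $\Delta_K\leq O_U(1)$. That prefix again has the form $\sigma 0^{k-|\sigma|}$, and $k\leq 2^n$ with $n=O(|\sigma|)$, so it is small compared to $|I|$ once $I$ is long enough.

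\textbf{Your route.} You replace that lemma with a direct coding. Because the interval sequence is computable and $|\sigma|=\min I$, the length $|\sigma|+m$ with $m=2^{|\sigma|}+\mathrm{int}(\sigma)$ already determines $\sigma$. Hence $K(\sigma 0^m)\leq K(|\sigma 0^m|)+O(1)$ outright, and the drop happens at the explicit place $m<2^{|\sigma|+1}$.

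\textbf{Comparison.} What your version buys is a self-contained, elementary proof that avoids the heavier online Kraft-function construction and the paper's detour about witnesses shared by $O_U(1)$ many $m$. The price is one extra computable growth condition on the intervals. That condition is harmless, for the reasons you point out. The paper's other length requirements, such as $a_z\leq 2\min I$ and $|I|/2>|\sigma|+m_1+O(1)$, are computable in $\min I$. The interval sequence is fixed before $b$, $c$, $p$, $z$, $N$ are chosen, as in the paper, so your machine's constant depends only on that fixed sequence. It can therefore be absorbed into the requirement that $b$ be large depending on $U$.

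The paper's route, by contrast, does not use the computability of the intervals at this step. It also reuses a lemma the paper needs anyway for \Cref{numKampleexcess}.
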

\begin{proof}
\Cref{numKpropconstantreturn2} not only prove the existence of such $m$
but gives a computable upper bound for $m$.
Note that
\begin{align}\label{numKeq60}
 \text{If }|\sigma 0^m| &= 2^n\text{ and $n$ is sufficiently large 
depending on $|\sigma|$, then}
\\ \nonumber
 K(\sigma 0^m)&\leq K(\sigma)+K(|\sigma 0^m|)+O_U(1)\\ \nonumber
&   \leq 2|\sigma|+K(2^n)
 \\ \nonumber
 &  \leq 3|\sigma|+ K(n)\leq 4|\sigma|+ 2\log_2 n.
 \end{align}
 So by \Cref{numKpropconstantreturn2}, as long as $4|\sigma|+2\log_2 n<n $,
 we have $$ \Delta_K(\sigma 0^m\uhr k)\leq O_U(1)$$
 for some $k\in\omega$.
 The proof of \Cref{numKpropconstantreturn2} shows that
 we can choose witness $k$ so that each chosen witness is shared by at most $O_U(1)$ many 
 $m$.
Therefore, since $I$ is sufficiently long (so there are many $m\in I-|\sigma|$ satisfying 
 (\ref{numKeq60})), we have for  of   $m$ as  (\ref{numKeq60}), the witness $k$ satisfies $k>|\sigma|$.
Thus,
since $I$ is sufficiently long,  we can choose $m$ much smaller (depending on $z$) than $\max I$.
Clearly we can further entail that $m$ is much smaller  (depending on $z$) than $|I|$
by letting $I$ to be sufficiently long.
\end{proof}
In particular $\Delta_K(\sigma 0^m)\leq 3b$ for some $m\in I-|\sigma|$.
Let $m_0\in I-|\sigma|$ be the smallest integer so that  $$\Delta_K(\sigma 0^{m_0})\leq  3b.$$
 If $ [p_{m_0}]\cap \Y_c^{-1}(z) $ has positive measure,
 then we are done.
 Otherwise,
 let $m_1\leq m_0$  be the largest number in $I-|\sigma|$ such that 
 $ [p_{m_1}]\cap \Y_c^{-1}(z) $ has positive measure.
  This $m_1$ exists since  $ [p_0]\cap \Y_c^{-1}(z) $ has positive measure
  and $p_{m+1}\leq p_m$ for each $m\leq m_0$
  (recall that $p_0$ is $(\sigma, b)$
  and $[p_0]\cap \Y_c^{-1}(z)$
  contains the 1-random real $x_0$ therefore has positive measure
  by \Cref{numK-claim-positivemeasurecertainclass}).
It suffices to show that $\Delta_K(\sigma 0^{m_1})\leq 3b$.

  Let's carefully look at what makes 
  $[p_{m_1+1}]\cap \Y_c^{-1}(z)$ to have zero measure.
  Let $a_z$ be a $K$-trivial constant for $z$,
i.e., $\Delta_K(z\uhr n)\leq a_z$ for all $n\in\omega$.
Since (recall (\ref{numKeqlevel}))
$z$ is all zero above level $\min I$, we can choose $a_z$ so that
$$a_z\leq K(z\uhr \min I)\leq 2\min I.$$
Also recall that $m_0$, therefore $m_1$ is much smaller than $|I|$.
Since $I$ is assigned to $z$, so 
$z\uhr m $ is a $c$-solution  to all strings in $  2^m\cap [\sigma]^\preceq$ for all $m\leq \max I$.
Therefore, the complement of the closed set $[p_{m_1+1}]\cap \Y_c^{-1}(z)$
(inside $[\sigma0^{m_1+1}]$)
is contained in
(the union of) the following two open sets:
$$
[2^{\geq \max I}\cap \D_{\leq a_z}], \bigg[[\sigma 0^{m_1+1}]^\preceq\cap \Delta_{<b}\bigg] 
$$
Since $2^n\cap |\D_{\leq a}| = O_U(2^a)$,
we have 
\begin{align}
\m\bigg([2^{\geq \max I}\cap \D_{\leq a_z}]\bigg) \leq O(2^{a_z-\max I})
\leq O(2^{-|I|/2})
<\frac{1}{2}\m(\sigma0^{m_1+1}).
\end{align}
The last inequality is due to $I$ sufficiently long depending on $z$
(recall that $a_z\leq 2\min I$) and $m_1 $ much smaller (depending on $z$) than $|I|$.
Therefore 
$$\m\bigg([\sigma 0^{m_1+1}]^\preceq\cap \Delta_{<b} \bigg|\sigma 0^{m_1+1} \bigg)\geq \frac{1}{2} $$
so $$\m\bigg([p_{m_1+1}]\bigg|\sigma 0^{m_1+1}\bigg)<\frac{1}{2}.$$
By \Cref{numKclaimext} item (1),
$$\Delta_K(\sigma 0^{m_1+1})-2b-2\log_2 b\leq 1,$$
which obviously implies $$\Delta_K(\sigma 0^{m_1})\leq \Delta_K(\sigma 0^{m_1+1})+O_U(1)\leq 3b.$$
Thus we are done.

\end{proof}

\input{Kcomplexityofcommon}

%% file: Kcomplexityofcommon.tex
\subsection{$K$-complexity of common solution and condition}
\label{numKseccomplexityofcommonsolution}
In this subsection,
we prove \Cref{numKlemcommonsolutionisKtrivial} and \Cref{numKsimultaneousdrop}.
\numKlemcommonsolutionisKtrivial*
\begin{proof}
Suppose $p = (\rho,b)$
and without loss of generality assume $ \rho $ is sufficiently long
(depending on $U,b$) so that, in particular, $\m([\rho, 3b]|\rho)>1/2$  
(since otherwise we can extend $\rho$ to achieve this by \Cref{numKclaimext} item (2)).

 We firstly recall some results of Bienvenu, Downey, Nies and Merkle\cite{BDNM15} characterizing K-trival
 as those whose K-complexity is dominated by a Solovay function.
 A computable function $h\in\omega^\omega$ is a \emph{Solovay function}
 iff  $h$ dominates $K$ up to an additive constnat, i.e.,  $$K(n)\leq h(n)+O_h(1)$$ for all $n\in\omega;$ 
 and for infinitely many $n\in\omega$, $$h(n)\leq K(n)+O_h(1).$$
By \cite{BDNM15} Theorem 4.4, 
\begin{theorem}[\cite{BDNM15}]
\label{numKth0}
For every Solovay function $h\in\omega^\omega$,
 every real $x\in 2^\omega$, if $K(x\uhr n)\leq h(n)+O_{x,h}(1)$,
 then $x$ is K-trivial. The converse is also true.
\end{theorem}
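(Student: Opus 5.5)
The converse direction is immediate, so I would dispose of it first. If $x$ is $K$-trivial then $K(x\uhr n)\leq K(n)+O_x(1)$. A Solovay function satisfies $K(n)\leq h(n)+O_h(1)$ for all $n$, so $K(x\uhr n)\leq h(n)+O_{x,h}(1)$. This uses only the first half of the definition of a Solovay function, and no randomness at all.

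For the forward direction, fix a Solovay function $h$ and $c$ with $K(x\uhr n)\leq h(n)+c$ for all $n$. Let $T$ be the c.e. tree of strings $\sigma$ with $K(\sigma)\leq h(|\sigma|)+c$. The first step is to pass from $h$ to a random real. Since $K\leq h+O(1)$, the sum $\Omega_h:=\sum_n 2^{-h(n)}$ is finite and left-c.e. I would use the known characterization (Bienvenu--Downey) that a computable $h$ with finite $\Omega_h$ is a Solovay function iff $\Omega_h$ is Martin-L\"of random. The useful form of this is contrapositive: any left-c.e. approximation that "charges" $\Omega_h$ too cheaply must be describable with a Solovay test, and this contradicts randomness.

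The core step is a Kraft-request (bounded request set) construction $L$ whose aim is $K(x\uhr n)\leq K(n)+O(1)$. At stage $s$, for each $n$ and each $\sigma\in 2^n\cap T[s]$ that lies on a current $T[s]$-path, $L$ requests a description of $\sigma$ of length $K_s(n)+d$, re-requesting whenever $K_s(n)$ drops. The width of $T$ at level $n$ is $O(2^{h(n)-K(n)+c})$, which is unbounded, so the weight of $L$ cannot be bounded by counting. Instead I would bound it by a cost-function argument. A request is wasted only when the string $\sigma$ stops being a prefix of the eventual path, that is, when $T$ "changes its mind" above level $n$. I would show that each such change at level $n$ can be charged to a change in the approximation of $\Omega_h$ of size about $2^{-h(n)}$. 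If the wasted weight exceeded a constant multiple of what $\Omega_h$ can pay, one could build a Solovay test covering $\Omega_h$, contradicting its randomness. The second half of the Solovay property, $h(n)\leq K(n)+O(1)$ infinitely often, is what makes the charge $2^{-h(n)}$ comparable to the request weight $2^{-K(n)}$. At those $n$ we get $K(x\uhr n)\leq K(n)+O(1)$ directly. The goal is then to propagate this to all lengths: use a chaining/compactness argument along $T$ between consecutive good levels, much like the Kraft function in the proof of \Cref{numKlemlowerbound}.

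The main obstacle is exactly this charging step. It has to control how often $T$ branches at levels where $h(n)$ is much larger than $K(n)$, and turn the randomness of $\Omega_h$ into a bound on the total weight of abandoned requests. My first attempt would be to mimic the ``golden run''/decanter style weight accounting from the proof that $K$-trivials are low for $K$. I would use the random $\Omega_h$ in place of $\Omega$ as the budget that the enumeration of $T$ is forced to pay.
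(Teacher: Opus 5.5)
The paper gives no proof of this statement; it is quoted from \cite{BDNM15}. Your converse direction is correct and complete. The forward direction, however, is a plan rather than a proof, and the two steps that would carry its content do not work as described.

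First, the charging step has no mechanism behind it. The natural approximation $\sum_{n<s}2^{-h(n)}$ of $\Omega_h$ is computable, with increments fixed in advance. So nothing in the enumeration of $T$ can force $\Omega_h$ to ``pay''. Randomness of $\Omega_h$ can only be exploited by exhibiting a Solovay test that captures it, and you never say which intervals go into the test or why $\Omega_h$ would lie in infinitely many of them. Also, $T$ is c.e.\ and never loses strings. A string that drops off your ``current path'' may be a dead end that stays in $T$ forever, and no event in the enumeration of $T$ marks this. Your own bookkeeping shows that the budget cannot balance. At length $n$ the requests can have total weight of order $2^{h(n)-K(n)+c}\cdot 2^{-K(n)}$ (the width of $T$ times the request weight). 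The charge you propose to collect there is only about $2^{-h(n)}$. These are comparable only when $h(n)\leq K(n)+O(1)$, that is, at the Solovay lengths. At those lengths $K(x\uhr n)\leq h(n)+c\leq K(n)+O(1)$ already holds by hypothesis, with no construction at all.

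So everything rests on the propagation step, and that step fails. A bound $K(x\uhr n)\leq K(n)+O(1)$ on an infinite, non-computable set of lengths is just i.o.\ $K$-triviality, which is strictly weaker than $K$-triviality. For example, alternately append a free bit and then pad with zeros up to a length $N$ from which the padded prefix can be computably decoded; this produces uncountably many i.o.\ $K$-trivial reals. Chaining down from a good length $n$ to $m<n$ only yields $K(x\uhr m)\leq K(x\uhr n)+K(m)+O(1)$, which loses a full $K(n)$. The Kraft function in the proof of \Cref{numKlemlowerbound} works in the opposite direction: it produces \emph{one} earlier length with a drop, not a bound at every length. What your counting does give is the analogue of Chaitin's theorem: $T$ has at most $O(2^{c+d})$ infinite paths (with $d$ the Solovay constant of $h$), each of them $\Delta^0_2$. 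Getting from there to $K(x\uhr m)\leq K(m)+O(1)$ for \emph{every} $m$ needs a global argument that uses the hypothesis at all lengths simultaneously. That could be an actual decanter/golden-run construction adapted to $h$, or a reduction to a known characterization of $K$-triviality. This is exactly the content of \cite{BDNM15}; ``mimic the golden run'' names the tool without supplying the argument.
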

Therefore,
if suffices to show that for some Solovay function $h$, for all $n\in\omega$,
\begin{align}\label{numKeq53}
\min_{\rho\in 2^n\cap p} K(\rho)\leq h(n)+O_{p,h}(1).
\end{align}

Recall \cite{BDNM15} Corollary 2.4
\begin{proposition}
[\cite{BDNM15}]
\label{numKpropsolovay}
There is a nondecreasing Solovay function $h\in\omega^\omega$.
\end{proposition}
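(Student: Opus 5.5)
Since \Cref{numKpropsolovay} is quoted from \cite{BDNM15} (Corollary~2.4), the paper can simply cite it. If one wants a self-contained argument, the plan below first builds an ordinary (not necessarily monotone) Solovay function and then makes it monotone by a computable minimisation. The monotonisation step is where I have not found a way to close the argument, and I say so there.

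\textbf{Step 1: a plain Solovay function.} Fix a computable bijection $\langle\cdot,\cdot,\cdot\rangle:\omega^3\to\omega$. Define a computable $f$ by
\[
f(\langle p,s,m\rangle)=|p|
\]
if $U(p)$ halts in exactly $s$ steps with output $m$, and $f(\langle p,s,m\rangle)=2\langle p,s,m\rangle$ otherwise. The weights $2^{-f(\cdot)}$ sum to at most $\Omega_U+O(1)$. By the coding theorem (Kraft--Chaitin), $K(k)\leq f(k)+O(1)$ for all $k$. If $p$ is a shortest $U$-program for $m$, halting at time $s$, then $k=\langle p,s,m\rangle$ computes $m$. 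Hence $K(m)\leq K(k)+O(1)$, and so
\[
f(k)=|p|=K(m)\leq K(k)+O(1).
\]
This happens for infinitely many $k$, so $f$ is a Solovay function.

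\textbf{Step 2: making it nondecreasing.} I would arrange $f(k)\geq\log_2 k$ for all $k$, by replacing $f(k)$ with $\max(f(k),\lceil\log_2 k\rceil)$ where needed. I would then set
\[
h(n):=\min_{k\geq n} f(k).
\]
This $h$ is computable: once the current candidate value is $v$, only the finitely many $k$ with $\log_2 k\leq v$ need to be inspected. It is nondecreasing by construction, and $h\leq f$. For the tightness condition, pick $n$ among the tight points of $f$ of the form $k=\langle p,s,m\rangle$. To get $h(n)\leq K(n)+O(1)$ there, we need $f(n)$ to be close to $\min_{k\geq n} f(k)$. One might hope to use that these tight points $k$ are ``incompressible records'', e.g.\ by choosing $m$ with $K(m)$ near the running maximum $\max_{j\leq k}K(j)$, but I have not checked that this can be arranged.

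\textbf{Main obstacle.} The hard part is the lower bound $K(n)\leq h(n)+O(1)$. We have $h(n)=f(k)\geq K(k)-O(1)$ for some $k\geq n$, but $K(n)$ is not controlled by $K(k)$ for larger $k$. A natural attempt is to replace $f(k)$ by an upper bound on $\max_{j\leq k}K(j)$, so that $h(n)=f(k)\geq K(n)$ follows automatically. I would take the computable recursive bound
\[
G(k)=\lfloor\log_2 k\rfloor+G(\lfloor\log_2 k\rfloor)+O(1),
\]
using $K(n)\leq|n|+K(|n|)+O(1)$. This gives $K\leq G+O(1)$ and $G$ nondecreasing. However, I could not show that $G(n)\leq K(n)+O(1)$ infinitely often. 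Counting does give $n$ of each length $\ell+1$ with $K(n)\geq \ell+K(\ell)-O(1)$, but $G$ instead carries the recursive term $G(\lfloor\log_2 k\rfloor)$. Matching the two would need $K(\ell)$ close to $G(\ell)$ at the same infinitely many places, which is the same problem one logarithmic level down.

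The recursion therefore does not close with $O(1)$ error. As it stands, this plan establishes a computable nondecreasing $G$ with $K\leq G+O(1)$, but not a Solovay function. A genuinely different idea is needed for the tightness condition; presumably this is what \cite{BDNM15} supply, by using the enumeration of $U$ together with the $\Omega$-approximation directly. The robust route is to cite Corollary~2.4 of \cite{BDNM15}.
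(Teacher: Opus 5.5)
Falling back on Corollary~2.4 of \cite{BDNM15} is exactly what the paper does; the paper gives no argument of its own. Your self-contained route, however, has a genuine gap at Step~2, as you suspected.

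\textbf{Why Step~2 fails.} The preparatory cap $f(k)\mapsto\max(f(k),\lceil\log_2k\rceil)$ already destroys the tight points of Step~1. At $k=\langle p,s,m\rangle$ you only know $K(k)\le K(p)+O(1)\le|p|+O(\log|p|)$. For the usual pairings, though, $\log_2k\ge|p|+\log_2m-O(1)$, so $\lceil\log_2k\rceil-K(k)\ge\log_2m-O(\log\log m)$ is unbounded.

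The more serious problem is the lower bound. Any nondecreasing $h$ with $K\le h+O(1)$ must satisfy $h(n)\ge\max_{j\le n}K(j)-O(1)$, and $\min_{k\ge n}f(k)$ gives no such guarantee. Whenever some $k\in[n,2n]$ is of the first kind in your definition of $f$, you get $h(n)\le\log_2n+O(1)$. Most $n\in[k/2,k]$ have $K(n)\ge\log_2n+K(\lfloor\log_2n\rfloor)-O(1)$, so $K-h$ is unbounded.

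Your pessimism about $G$ is right for a structural reason. Its sum $\sum_n2^{-G(n)}$ is a computable real. By the Bienvenu--Downey characterization, a computable $f$ is a Solovay function iff $\sum_n2^{-f(n)}$ is finite and Martin-L\"of random. So no explicit bound of that kind can be Solovay.

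\textbf{The missing idea.} One level of your logarithmic recursion suffices. The inner function need not be monotone: it only needs $g(\ell+1)\ge g(\ell)-1$, and any Solovay function can be smoothed to satisfy this. Take the unmodified $f$ of Step~1 and put
\[
g(\ell):=\min_{j\ge\ell}\bigl(f(j)+j-\ell\bigr).
\]
This is computable, since the minimum is attained at some $j\le\ell+f(\ell)$. Then $g\le f$ and $g(\ell)\le g(\ell+1)+1$. Moreover
\[
\sum_\ell2^{-g(\ell)}\le\sum_\ell\sum_{j\ge\ell}2^{-f(j)-(j-\ell)}\le2\sum_j2^{-f(j)}<\infty .
\]
Now let $h(0):=g(0)$ and $h(n):=\ell+g(\ell)$ for $n\in[2^\ell,2^{\ell+1})$.

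\emph{Monotonicity.} $h$ is constant on dyadic blocks, and nondecreasing across them precisely because $g(\ell+1)\ge g(\ell)-1$.

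\emph{Domination.} $\sum_n2^{-h(n)}\le2\sum_\ell2^{-g(\ell)}<\infty$, so $K\le h+O(1)$ by the coding theorem.

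\emph{Tightness.} Take any of the infinitely many $\ell$ with $f(\ell)\le K(\ell)+O(1)$. Chaitin's counting theorem gives $n\in[2^\ell,2^{\ell+1})$ with $K(n)\ge\ell+K(\ell)-O(1)$. Hence $h(n)=\ell+g(\ell)\le\ell+K(\ell)+O(1)\le K(n)+O(1)$.

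So your Step~1 plus a single level of your recursion yields an elementary, self-contained proof. It uses only the coding and counting theorems and could replace the citation.
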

Let $h\in\omega^\omega$  be as in \Cref{numKpropsolovay}
and suppose (without loss of generality) $h$ dominates $K$. 
Note that for every $m\leq n$,
$K(m)\leq h(m)\leq h(n)$. i.e.,
$$
h(n)\geq \max_{m\leq n}K(m).
$$

\ \\

\noindent\textbf{Realizing (\ref{numKeq53}).}
Now we define a Kraft function $\h K$ (depending on $\rho,b$)
to realize (\ref{numKeq53}).
The goal of this function is:
for every level $n\geq |\rho|$, in the end, 
\begin{align}\label{numKeqtargetV}
\text{there exists some $\sigma_n\in 2^n\cap p$ such that $\h K(\sigma)\leq h(n)+2|\rho|$.}
\end{align}
Since $p$ is not a computable tree, this goal can only be achieved in an online fashion
(if it can be achieve at all).
For $t\in\omega$, let $K[t]$ denote the right c.e. approximation of $K$ at time $t\in\omega$.
Clearly this give rise to approximations of condition $q$ (denoted as $q[t]$),
set $\Delta_{<a}$ (denoted as $\Delta_{<a}[t]$) and $\Delta_K$ (denoted as $\Delta_K[t]$) etc.
More precisely,
the player $\h K$ (playing against $K$) maintain, at each round $t\in\omega$
a sequence $$(\sigma_n[t]\in 2^n\cap p[t]\cap \D_{\geq 3b}[t]: n\geq |\rho|)$$ of strings
such that (for every $n\geq |\rho|$)
\begin{align}\label{numKeq51}
\h K[t](\sigma_n[t])\leq h(n)+2|\rho|.
\end{align}

Note that only in the case of following events, can the working achievement of $\h K$   be sabotaged
\footnote{It is convenient to assume that at each round $t$,
there is only one  $\eta\in\omega\cup 2^{<\omega}$ so that $K[t](\eta)\ne K[t](\eta)$
and it is convenient to   think of $K$
as defined on both integers and strings.
So in the round an integer is compressed, no working achievement is 
sabotaged.}:
\begin{enumerate}
\item Either a $\tau\in p[t-1]$ is kicked out of $p[t-1]$ (at round $t$);
\item Or  $p[t ] = p[t-1]$ but some $\sigma_n[t-1]$  is kicked out of $\D_{\geq 3b}[t-1]$;
\end{enumerate}
We calculate the cost for $\h K$ to repair these sabotage and compare the cost 
to the cost of $K$ (to make the event happening)

\noindent
\textbf{Cost in case of event (1):}
Player $\h K$ need to replace each $\sigma_n[t-1]$ in $[\tau]^\preceq$
(by $\sigma_n[t]$)
and re-assign mass to $\sigma_n[t]$.
The cost of $\h K$ is:
$$
\sum_{\sigma_n[t-1]\in [\tau]^\preceq} 2^{-h(n)-2|\rho|}.
$$
On the other hand, the cost for $K$ (to kick out $\tau$)
is greater than $ 2^{-K[t-1](|\tau|)-b} $.
Therefore, to manage the cost of $\h K$, player 
$\h K$ arrange $(\sigma_n[t]:n\geq |\rho|)$ so that
for every $\tau \in [\rho]^\preceq$
\begin{align}\label{numKeqcost}
\sum_{\sigma_n[t]\in [\tau]^\preceq} 2^{-h(n)-2|\rho|}
\leq 2^{-K[t](|\tau|)-b}.
\end{align}
We will later show that (\ref{numKeqcost}) could indeed be 
arranged.

\noindent
\textbf{Cost in case of event (2):}
Player $\h K$ simply replace $\sigma_n[t-1]$
by an arbitrary string $\sigma\in 2^n\cap p[t]\cap \D_{\geq 3b}[t]$
so that arranging $\sigma_n[t]:=\sigma$
complies to  (\ref{numKeqcost})
(again, we will later show that such arrangement exists).
The cost of $\h K$ is: $2^{-h(n)-2|\rho|}$
while the cost for $K$
(to kick $\sigma_n[t-1]$ out of $ \D_{\geq 3b}[t-1]$) satisfies 
$$2^{-K[t](n)-3b}> 2^{-h(n)-2|\rho|}.$$
The inequality is due to $\rho$ sufficiently long (depending on $b$).

We have shown that in each case of event (1)(2),
the cost of $\h K$ (to re-achieve (\ref{numKeq51})) is smaller than the cost of $K$
(to make the event happening). 
But in case of  event (1), this cost bounding is under hypothesis
that there is an arrangement satisfying  (\ref{numKeqcost}).
We show that (\ref{numKeqcost}) can indeed be arranged.

\ \\


 \noindent\textbf{Re-arrange $(\sigma_n[t]:n\in\omega)$
 ——case of event (1):} When $\tau$ is kicked out of 
$p[t-1]$ (at round  $t$).

In the increasing order of $n\in\omega$  with $\sigma_n[t-1]\in [\tau]^\preceq$,
find an arbitrary $\sigma\in 2^n\cap p[t]\cap \D_{\geq 3b}[t]$ to make
$$
\h K[t](\sigma)\leq h(n)+2|\rho|
$$
without violating (\ref{numKeqcost})
and arrange $\sigma_n[t]:= \sigma$.
We show that 
\begin{claim}
Each $\sigma_n[t-1]\in [\tau]^\preceq$ can be re-arranged.
\end{claim}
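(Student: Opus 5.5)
We need to show that when $\tau$ is kicked out of $p[t-1]$ at round $t$, every $\sigma_n[t-1]\in[\tau]^\preceq$ (handled in increasing order of $n$) can be replaced by some $\sigma\in 2^n\cap p[t]\cap\D_{\geq 3b}[t]$, so that after assigning $\h K[t](\sigma)\le h(n)+2|\rho|$ the invariant (\ref{numKeqcost}) still holds for every $\tau'\in[\rho]^\preceq$. The plan is a counting argument: find a candidate $\sigma$ whose prefixes all have enough spare budget, and use that the total demand at levels below $n$ is controlled by the summability of $2^{-h(m)}$.

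\textbf{Candidate strings.} First I will show that $2^n\cap p[t]\cap\D_{\ge 3b}[t]$ is large. Since $p[t]\supseteq p$ and $\D_{\geq 3b}[t]\supseteq\D_{\geq 3b}$, it suffices to work with the final objects. By the standing assumption $\m([\rho,3b]|\rho)>1/2$ and $[\rho,3b]\subseteq[\rho,b]$, at least half of the measure of $[\rho]$ consists of reals whose level-$n$ prefixes lie in $p\cap\D_{\ge 3b}$. So at least $2^{n-|\rho|-1}$ strings of length $n$ are available.

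\textbf{Budget of a prefix.} For a candidate $\sigma$, the new mass $2^{-h(n)-2|\rho|}$ is charged to every prefix $\tau'$ of $\sigma$ with $|\rho|\le|\tau'|\le n$. I call $\tau'$ \emph{saturated} if adding this mass would violate (\ref{numKeqcost}), i.e. its current load exceeds $2^{-K[t](|\tau'|)-b}-2^{-h(n)-2|\rho|}$. I aim to show that the set of candidates having some saturated prefix has measure less than $2^{-|\rho|-1}$. Then a good $\sigma$ exists.

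\textbf{Bounding saturated prefixes.} Fix a level $\ell\geq|\rho|$. The total load over all $\tau'\in 2^\ell$ is at most $\sum_{m\ge\ell}2^{-h(m)-2|\rho|}$, since each level carries one $\sigma_m$. Each saturated $\tau'$ at level $\ell$ carries load at least roughly $2^{-K(\ell)-b-1}$; here I use $h(n)\ge\max_{m\le n}K(m)$ and $\rho$ long relative to $b$, so that the subtracted term is negligible. Hence the number of saturated strings at level $\ell$ is at most
\[
2^{K(\ell)+b+1-2|\rho|}\sum_{m\ge\ell}2^{-h(m)},
\]
and their measure is at most $2^{K(\ell)-\ell+b+1-2|\rho|}\sum_{m\ge\ell}2^{-h(m)}$.

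\textbf{Summing over levels: the main obstacle.} I must sum this bound over $\ell\ge|\rho|$ and get something below $2^{-|\rho|-1}$. The tail $\sum_{m\ge\ell}2^{-h(m)}$ is at most $O(1)$, because $h$ dominates $K$ up to a constant and $\sum_m 2^{-K(m)}\le 1$. The factor $2^{K(\ell)-\ell}$ is summable, using $K(\ell)\le 2\log\ell+O(1)$, and its sum over $\ell\ge|\rho|$ is small. The remaining factor $2^{b+1-2|\rho|}$ beats the target $2^{-|\rho|-1}$ once $|\rho|\gg b$, and the standing assumption that $\rho$ is sufficiently long depending on $U$ and $b$ supplies this.

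A further care point is that in the re-arrangement, (\ref{numKeqcost}) must hold simultaneously against the current approximation $K[t]$, which only decreases over time. A decrease of $K[t](|\tau'|)$ makes the bound $2^{-K[t](|\tau'|)-b}$ larger, so the inequality only gets easier, and earlier arrangements are never invalidated by compressions of integers. The strings freed from $[\tau]^\preceq$ also remove load, which again only helps.
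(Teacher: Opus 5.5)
Your core counting is the paper's argument, but the step that supplies the candidates is false. You assert $p[t]\supseteq p$ and $\Delta_{\geq 3b}[t]\supseteq\Delta_{\geq 3b}$. That would require $\Delta_K[t]$ to be nonincreasing in $t$, and it is not: $\Delta_K[t](\sigma)=K[t](\sigma)-K[t](|\sigma|)$, and the subtracted term also decreases. At a round where $K[t](\sigma)$ has reached its final value but $K[t](|\sigma|)$ has not, $\Delta_K[t](\sigma)<\Delta_K(\sigma)$. So prefixes of members of the final $[\rho,3b]$ can be temporarily outside $p[t]$ or $\Delta_{\geq 3b}[t]$. This is why the paper's footnote says a round in which an integer is compressed sabotages nothing: such rounds can only enlarge $p[t]$.

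Hence $m([\rho,3b][t]\,|\,\rho)>1/2$ at the round in question does not follow from the final bound, and your lower bound on the number of candidates is unsupported. The paper does not derive it either; it lets $\hat K$ act only at rounds where $m([\rho,3b][t]\,|\,\rho)>1/2$. Adopt that restriction, and check that it holds at all sufficiently late rounds:
\begin{itemize}
\item Since $K[t]\geq K$, fewer than $2^k$ strings satisfy $K[t](\sigma)<k$. So at every stage, level $\ell$ contributes less than $2^{K[0](\ell)+3b-\ell}$ to the measure of $[\rho]\setminus[\rho,3b][t]$.
\item If $K[0](\ell)\le 2\log\ell+O(1)$, the sum of these contributions over $\ell\geq L$ is tiny for large $L$.
\item The finitely many values at levels below $L$ eventually settle. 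After that, $m([\rho,3b][t]\,|\,\rho)$ is at least its final value minus $2^{|\rho|}$ times that tail, hence above $1/2$ once $L$ is large.
\end{itemize}

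With this repair your argument works and matches the paper's. Blocked candidates are rare because the total load at each level is $O(2^{-2|\rho|})$, while a saturated prefix at level $\ell$ must carry load at least $2^{-K[t](\ell)-b-1}$. The bookkeeping differs:
\begin{itemize}
\item The paper argues by contradiction with the set $W_3$ of \emph{shortest} cost-witnesses. This set is prefix-free, so each $\sigma_m$'s mass is counted once, and the crude bound $K[t](\ell)\le\ell+O(1)$ suffices.
\item Your level-by-level union bound counts $\sigma_m$'s mass at every level $\ell\le m$. So it needs $\sum_{\ell}2^{K[t](\ell)-\ell}$ to be bounded uniformly in $t$.
\end{itemize}
That must be a bound on $K[t](\ell)$, not on $K(\ell)$. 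Saturation is measured against $K[t](\ell)\geq K(\ell)$, so substituting $K(\ell)$ goes the wrong way. You therefore need the convention $K[0](\ell)\le 2\log\ell+O(1)$. As in the paper's footnote, you also need $K[0]\le h$, so that the subtracted term $2^{-h(n)-2|\rho|}$ is negligible against $2^{-K[t](\ell)-b}$.
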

\begin{proof}
Suppose otherwise let $n^*$ be the smallest number
so that $\sigma_{n^*}[t-1]\in [\tau]^\preceq$
and there is no $ \sigma\in 2^{n^*}\cap p[t]\cap \D_{\geq 3b}[t]$
so that replacing $\sigma_{n^*}[t-1]$ by $\sigma_{n^*}[t]:=  \sigma$
does not violate (\ref{numKeqcost})
\footnote{ That is, for every $m<n^*$, $\sigma_m[t]$ has been defined.
And, for every $m\in\omega$ with $\sigma_m[t-1]\notin [\tau]^\preceq$,
we arrange $\sigma_m[t]:= \sigma_m[t-1]$.}.

For each $\sigma\in 2^{n^*}\cap [\rho]^\preceq$, let $\eta_\sigma\preceq\sigma$
be the shortest witness for why $\sigma_{n^*}[t]$ cannot be arranged to be $\sigma$.
Clearly there are only three reasons:
\begin{enumerate}

\item   $\eta_\sigma\in \D_{<b}[t] $;
\item $\eta_\sigma = \sigma\in \D_{<3b}[t]$
\item  Arranging so will violate (\ref{numKeqcost}). That is
\footnote{For convenience, we assume 
$$K[0](m)\leq h(m)\wedge (m+O_U(1))$$ for all $m\in\omega$.}:
\begin{align}\label{numKeq52}
\sum_{\sigma_m[t]\in [\eta_\sigma]^\preceq}
2^{-h(m)-2|\rho|}&\geq 2^{-K[t](|\eta_\sigma|)-b}- 2^{-h(n^*)-2|\rho|}\\ \nonumber
&\geq 2^{-K[t](|\eta_\sigma|)-b }/2.
\end{align}
The last inequality is due to $h(n^*)\geq\max_{m\leq n^*} K(m)\geq K(|\eta_\sigma|)$
and $\rho$ sufficiently long (depending on $b$).
\end{enumerate}
Let $W_1, W_2,W_3$ denote the set of witness
of three items respectively.
Note that $W_1\cup W_2\cup W_3$ is prefix free
(since each witness is as short as possible).
Therefore, by lower bound of measure of $[p]$
\footnote{Here we assume that $\h K$ only act at the round $t$ so that $\m([\rho,3b][t]|\rho)>1/2$.},
we have
$$\sum_{\eta\in W_3}2^{-|\eta|} = \m(W_3)\geq \m([\rho,3b]|\rho)\cdot 2^{-|\rho|}\geq  \frac{1}{2}2^{-|\rho|}.$$
Thus, 
\begin{align}\nonumber
\sum_{\eta\in W_3} \sum_{\sigma_m[t]\in [\eta]^\preceq}
2^{-h(m)-2|\rho|}&\overset{(\ref{numKeq52})}{\geq} \sum_{\eta\in W_3} 2^{-K[t](|\eta|)-b }/2\\ \nonumber
&\geq 2^{-O_U(1)}\sum_{\eta\in W_3} 2^{ -|\eta| -b } 
\\ \nonumber
&\geq  2^{-O_U(1)}2^{-|\rho|-b}.
\end{align}
However, since $h$ dominates $K$,
we have
$$
\sum_{\eta\in W_3} \sum_{\sigma_m[t]\in [\eta]^\preceq}
2^{-h(m)-2|\rho|}\leq
 \sum_{m\geq|\rho|} 2^{-h(m)-2|\rho|}\leq 2^{-2|\rho|}< 2^{-O_U(1)}2^{-|\rho|-b}.
$$
The last inequality is due to $\rho$ sufficiently long depending on $U,b$.
Thus a contradiction.

\end{proof}

 \noindent\textbf{Re-arrange $(\sigma_n[t]:n\in\omega)$
 ——case of event (2):} $\sigma_n[t-1]$  is kicked out of $\D_{\geq 3b}[t-1]$.
The proof for this case is identical to that of event (1).

\ \\

In summary, we have shown that 
(\ref{numKeq51}) can be maintained with a cost (by (\ref{numKeqcost})):
$$
\sum_{ \sigma\in 2^{<\omega} } 2^{-\h K(\sigma)}\leq 
\sum_{ \sigma\in 2^{<\omega}}2^{-K(\sigma)}\leq 1.
$$
Therefore, for all $\sigma\in 2^{<\omega}$,
$$
K(\sigma)\leq \h K(\sigma)+O_{\h K}(1)\leq \h K(\sigma)+O_p(1).
$$
The $O_p(1)$ term is due to definition of $\h K$ depending on $p$.
Moreover, we have (\ref{numKeqtargetV}) is achieved so 
$$
\min_{\sigma\in 2^n\cap p} K(\sigma) \leq\min_{\sigma\in 2^n\cap p} \h K(\sigma)+O_p(1)
\overset{(\ref{numKeqtargetV})}{\leq} h(n)+O_p(1).
$$
Thus, for every $y\in \Y_c(p)$,
$$
K(y\uhr n)\leq h(n)+O_p(1)+c.
$$
 But $h(n)+O_p(1)+c$ is also a Solovay function.
 Thus by \Cref{numKth0}, $y$ is K-trivial.
 Thus we are done.

\end{proof}

\numKsimultaneousdrop* 
\begin{proof}
We prove \Cref{numKsimultaneousdrop} in a similar fashion as 
\Cref{numKlemcommonsolutionisKtrivial} by defining a Kraft function  $\h K$
such that
for each $b>O_U(1)$, each $2^b$ many conditions
$p_i= (\rho_i,b)$ (where $i<2^b$ and $\rho_i $ are mutually incomparable) and each $n\in\omega$
\footnote{If this is achieved,
then the conclusion of (\ref{numKsimultaneousdrop})
does follows since given conditions $(p_i=(\rho_i,b):i<b)$
where $\m([p_i]|\rho_i)>1-2^{-4b}$ may not hold, by \Cref{numKclaimext},
we can extend $p_i $ to $q_i = (\h\rho_i, b)$ so that 
$\m([q_i]|\h\rho_i)>1-2^{-4b}$ and applying the conclusion for $(q_i:i<2^b)$. },
   \begin{align}\label{numKeqtargetV2}
  &\text{If $\m([p_i]|\rho_i)>1-2^{-4b}$ for all $i<2^b$,
  then (in the end)}\\ \nonumber
  &\text{there exists an $m>n$ and $\sigma_i\in 2^m\cap p_i$ (for each $i<2^b$)}\\ \nonumber
&\text{such that $\h K(\sigma_i)\leq K(m)+ b$.}
\end{align}

In this proof, 
we call $(b,(\rho_i: i<2^b), n)$
a \emph{tuple} iff $b,n\in \omega$
with $b>O_U(1)$  and $(\rho_i\in 2^{<\omega}:i<2^b)$ are mutually incomparable.
We assign an integer interval $I_{b,\bm\rho,n}$
(hence called \emph{working interval}) to each tuple so that
\begin{enumerate}
\item Intervals assigned to different tuples are mutually disjoint
and the union of working intervals almost covers $\omega$ (except for a finite set);
\item $\min I_{b,\bm\rho,n}$ is sufficiently large depending on 
$b,\bm\rho,n$  and each  working interval   is sufficiently long.
\end{enumerate}
Let $\mcal I$ denote the collection of working intervals.
Player $\h K$ will realize (\ref{numKeqtargetV2}) for each tuple by some $m$ in the working interval $I$ assigned to that tuple
such  that (to manage the cost) $K(m)$ is the maximal among $(K(n):n\in I)$.
For each working interval $I$,
let $m_I[t]\in I$ be such that $K[t](m_I[t])$ is the maximal among 
$(K[t](n): n\in I)$. 
At each round $t$, for each working interval $I$ assigned to, say $b,\bm\rho,n$
\footnote{If this working assumption is not true, $\h K$ needs to do nothing.},
\begin{align}\label{numKeqworkingachieve}
&\text{If $\m( [p_i[t]]|\rho_i)>1-2^{-4b}$ for all $i<2^b$, then player $\h K$ maintains a tuple}\\ \nonumber
&\text{$(\sigma_{I,i}[t]\in 2^{m_I[t]}\cap p_i[t]: i<2^b)$
with $[\sigma_{I,i}[t]]^\preceq\cap \D_{<b}[t]
\subseteq 2^{>m_I[t]+3b}$ }\\ \nonumber
&\text{and assign mass to them such that }\h K[t](\sigma_{I,i}[t])\leq K[t](m_I[t])+b.
\end{align}
Obviously, if player $\h K$ can 
achieve this at infinitely many round $t$, then he could achieve
(\ref{numKeqtargetV2}) in the end.
Also note that, putting cost of $\h K$ aside,
 (\ref{numKeqworkingachieve}) can indeed be maintained.
This only requires to note that 
if $\m([p_i]|\rho_i)>1-2^{-4b}$, then for any $m>|\rho_i|$, there exists 
some $\sigma\in 2^m\cap p_i$ such that 
$[\sigma]^\preceq\cap \D_{<b}
\subseteq 2^{>m+3b}$,
which follows easily since
otherwise $\m([p_i]|\sigma)\leq 1-2^{-3b}$
for all $\sigma\in 2^m\cap p_i$ a contradiction
to $\m([p_i]|\rho_i)>1-2^{-4b}$.

\def\typem{type-$m$}
\def\typetaubelow{type-$\tau$-below}
\def\typetauabove{type-$\tau$-above}
Given a working interval $I$ (say assigned to $b,\bm\rho,n$), there are three ways
(for player $K$) to sabotage the achievement on $I$
\begin{enumerate}
\item Kill $m_I$  i.e., 
\begin{align}\nonumber
&\text{$K[t](m_I[t])\ne K[t-1](m_I[t-1])$ }\\ \nonumber
&\text{or $m_I[t]\ne m_I[t-1]$}.
\end{align}
\item Kill $\sigma_{I,i}$ (for some $i<2^b$) from below. i.e., Kick it out of $p_i $;
\item Kill $\sigma_{I,i}$  (for some $i<2^b$) from above. i.e., $[\sigma_{I,i}]^\preceq\cap \Delta_{<b} 
\cap 2^{\leq m_I+3b}\ne\emptyset$.
\end{enumerate}

Referring to these sabotage as type-$m$, \typetaubelow,
type-$\tau$-above sabotage respectively.
These sabotage are clearly caused by the following three type of events for $I$:
\begin{enumerate}
\item $K[t](m_I[t-1])\ne K[t-1](m_I[t-1])$.
\item For some $\rho_i\preceq\tau\preceq \sigma_{I,i}[t-1]$, we have $K[t](\tau)<K[t-1](\tau)$
and $\tau\in \D_{<b}[t]$.

\item For some $\tau\succeq\sigma_{I,i}$,
we have   $K[t](\tau)<K[t-1](\tau)$ and 
$\tau\in \Delta_{<b}[t]\cap 2^{\leq m_I[t]+3b}$.
\end{enumerate}
Referring these events as 
type-$m$, type-$\tau$-below,
type-$\tau$-above events respectively;
and   call $m=m_I[t-1]$ ($\tau$ respectively) the \emph{witness} of 
the three type of events (for $I$).
Obviously, each type of event will cause the corresponding type of sabotage
and each type of sabotage can only be caused by the corresponding type of event.

Note that in case of \typem\ event, the witness could only sabotage
the working achievement of the working interval containing the witness;
using \Cref{numKclaim4} below we show that this is also the case for \typetauabove\
event;
on other hand, in case of \typetaubelow\ event, the witness could also 
sabotage the working interval above $|\tau|$.
It's not hard to see that:
\begin{remark}\label{numKrem1}
The cost  for $\h K$ to  repair on working interval $I$
with $m\in I$ or $|\tau|\in I$ is bounded by the cost for $K$
to (make the event happening), this is only because
$K(m_I)$ is the maximal among $(K(\h n):\h n\in I)$.
On the other hand, the cost for $\h K$ to repair on working intervals $(J: J>|\tau|)$
is very small (compared to that of $K$)
since each $|I|$ is sufficiently long and $m_I$ much smaller than $\max I$.
\end{remark}

The rest of this proof 
will carry out
the cost   calculation mentioned in \Cref{numKrem1}.
 To this end, we observe:
\begin{claim}\label{numKclaim4}
For every working interval $I$, $m_I$ is much smaller than 
$\max I$. In particular $$\max I-m_I>\min I>3b.$$
So the \typetauabove\ event can only sabotage the 
working achievement of working interval $J$
such that $|\tau|\in J$.
\end{claim}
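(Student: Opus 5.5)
The plan is to derive the Claim from the freedom we have in choosing the working intervals, not from any property of $K$ specific to the construction. The point is that $m_I[t]$ maximizes $K[t]$ over $I$. Integers just below $\max I$ are cheap to describe once $\max I$ is cheap to describe, whereas $I$ also contains integers of near-maximal complexity. So if $I$ is long enough relative to its position, the maximizer cannot sit in the top segment of $I$.

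\textbf{Step 1: fixing the intervals.} I will make the assignment of working intervals effective. Enumerate the tuples $(b,\bm\rho,n)$ computably, and let $I_k=[a_k,a_{k+1})$ be the $k$-th interval, assigned to the $k$-th tuple. Choose $a_{k+1}$ by a computable rule, for instance $a_{k+1}=2^{2a_k}$, where $a_k$ is already large depending on the tuple; in particular $a_k>3b$. Then $\max I_k=a_{k+1}-1$ is computable from $k$, so
$$K(\max I_k)\le K(k)+O(1)\le 2\log_2 a_k+O(1).$$
These requirements are compatible with items (1) and (2) in the definition of working intervals.

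\textbf{Step 2: the top segment of $I$ has low complexity.} Let $j<\min I=a_k$. Then
$$K(\max I-j)\le K(\max I)+K(j)+O_U(1)\le 4\log_2 a_k+O_U(1).$$
For the online version I will use the convention already used in the paper, that $K[0]$ lies below a fixed computable upper bound. Concretely, take $K[0]$ to include the machine that outputs $\max I_k-j$ from descriptions of $k$ and $j$. Then the same bound holds for $K[t]$ at every round $t$.

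\textbf{Step 3: some element of $I$ has high complexity.} The interval $I$ contains $[2^{2a_k-1},2^{2a_k})$. By counting, some $n$ in this range has $K(n)\ge 2a_k-1$, and hence $K[t](n)\ge K(n)\ge 2a_k-1$ at every round $t$. Since $a_k$ is large, $2a_k-1>4\log_2 a_k+O_U(1)$. So at every round the maximum of $K[t]$ over $I$ is not attained in the top segment $(\max I-a_k,\max I]$. This gives
$$\max I-m_I[t]\ge a_k=\min I>3b,$$
and in fact the gap is far larger than $\min I$. This is what "much smaller" means here.

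\textbf{Step 4: type-$\tau$-above events stay inside $I$.} A type-$\tau$-above event for a working interval $J$ has a witness $\tau\succeq\sigma_{J,i}$ with $|\tau|\le m_J+3b$. Since $|\sigma_{J,i}|=m_J\in J$, we get $\min J\le m_J\le|\tau|$. Step 3 gives $m_J+3b<m_J+\min J<\max J$. Hence $|\tau|\in J$. Because the working intervals are disjoint, the witness can sabotage only the interval containing $|\tau|$.

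The main obstacle is the online issue in Step 2: the bound must hold for the approximations $K[t]$ uniformly in $t$, not only in the limit. I would handle it by the convention on $K[0]$ described there. If that convention is not acceptable, the fallback is to modify the definition so that $m_I[t]$ is taken over $I$ minus its top $\min I$ elements. This costs nothing in Remark~\ref{numKrem1}, since by Step 3 the limit maximizer already lies outside that top segment.
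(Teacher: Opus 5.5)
Your argument is correct and is essentially the paper's proof. A counting (Kraft) bound forces $\max_{\hat n\in I}K[t](\hat n)$ to be large at every round, since $K[t]\geq K$. The computability of $\mathcal I$ makes the elements within $\min I$ of $\max I$ cheap to describe. The online issue is handled by the same kind of convention on $K[0]$ that the paper's footnote uses.

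Your version is more explicit: you give a concrete growth rate for the intervals and spell out why $|\tau|\in[m_J,m_J+3b]\subseteq J$. One small point: as written you only get $\max I-m_I\geq\min I$, not the strict inequality in the claim. Letting $j$ range up to $\min I$ inclusive fixes this.
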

\begin{proof}
This is simply because $I$ is sufficiently long
and $\mcal I$ is computable, so every $m\in I$ that is close to $\max I$
will have much shorter $U$-description   than most of the elements in $I$.

More precisely, since
$$\sum_{\h n\in I}2^{-K[t](\h n)}\leq 1,$$
by averaging over $I$, we obviously have
\begin{align}\label{numKeq72}
\max_{\h n\in I}K[t](\h n)\geq \log_2 |I|.
\end{align}
On the other hand, among $\mcal I$,
$I$ is at most the $(\min I)^{th}$ interval. Therefore,
since $\mcal I$ is computable, 
for every $\h n\in I$ with $\max I-\h n\leq \min I$,
we have
\footnote{For convenience, we assume 
$K[0](\h n)\leq 2\min I$ for all $\h n\in I $
with $\max I-\h n\leq \min I$ and all working interval $I$
(otherwise player $\h K$ wait for such a moment for $I$ before working on $I$).}
\begin{align}\label{numKeq70}
K(\h n)\leq 2\min I < \log_2 |I|.
\end{align}
The last inequality is due to $|I|$ sufficiently long.
 Thus combining (\ref{numKeq72})(\ref{numKeq70}) 
 we have $\max I-m_I>\min I$.
 The inequality $\min I>3b$ follows by choice of  $I$.
\end{proof}

\noindent
\textbf{Cost of repair for \typem\   event:}
Let $m$ be the witness and $I\ni m$ (so it must be the case $m= m_I[t-1]$).
In this event, player $\h K$ need to re-arrange $\sigma_{I,i}[t-1]$ to other strings
(when $m_I[t]\ne m_I[t-1]$)
and re-assign mass to 
$(\sigma_{I,i}[t]:i<2^b)$.
The cost for $K$ (to make this event) is, trivially $2^{-K[t](m)}$.
If $m_I[t] = m_I[t-1]$, then the cost for $\h K$ is
$$
2^b 2^{-K[t](m)-b}\leq 2^{-K[t](m)}.
$$
If $m_I[t] \ne m_I[t-1]$ (so $K[t](m_I[t])>K[t](m)$),
then the cost for $\h K$ is
$$
2^b 2^{-K[t](m_I[t])-b}< 2^{-K[t](m)}.
$$
Whichever is the case the cost for $\h K$ is bounded by that of cost of $K$.

\noindent\textbf{Cost of repair for \typetaubelow\   event:}
Let $\tau $ be the witness and let working interval $I\ni |\tau|$
(suppose $I$ is assigned to $b,\bm\rho,n$).
Firstly, 
for each affected working interval $J$ (assigned to tuple $  b_J, \bm\rho_J,  n_J$) 
there is at most one $\sigma_{J,i}[t]$ that is affected
(meaning $\rho_{J,i}\preceq\tau\preceq\sigma_{J,i}[t]$  and 
$\tau\notin p_{J,i}[t]$) since $\bm\rho_J$ are mutually incomparable.

For each affected working interval $J$,
re-arrange the affected $\sigma_{J,i}[t]$
  to be (arbitrary) strings as
in (\ref{numKeqworkingachieve})  
(if for some $i<2^{  b_J}$ such string does not exists, then $\h K$ need to do nothing
for interval $J$).
Then re-assign mass to $\sigma_{J,i}[t]$.
There are two subcases.

\textbf{Subcase 1:} Working achievement on $I$ is sabotaged.
The cost for $K$ (to make this event) is:
 $$2^{-K[t](\tau)}\geq 2^{-K[t](|\tau|)-b}\geq 2^{-K[t](m_I)-b}.$$
 The first inequality is due to $\tau$ being a witness for 
 \typetaubelow\ event for $I$
 (so $\tau\notin p_i[t]$). The second inequality is due to $K[t](m_I )$
 is maximal  among $(K(\h n):\h n\in I)$.
Then, the cost for $\h K$ is bounded by:
 \begin{align}\label{numKeq71}
& 2^{-K[t](m_I)-b}+\sum_{J>|\tau|}2^{-K[t](m_J)-b_J}\\ \nonumber
\text{by maximal of $K[t](m_J)$
\ \ \ \ \ }\leq &2^{-K[t](m_I)-b} +\sum_{J>I} \frac{1}{|J|}\\ \nonumber
\leq& 2^{-K[t](m_I)-b}+\frac{1}{|I|^2}\\ \nonumber
\leq& 2\cdot 2^{-K[t](m_I)-b}.
\end{align}
The last two inequality are due to sufficiently long of $J$
and sufficiently long of $I$  depending on $b,\bm\rho,n$ 
(we may assume by choice of $I$: $2^{-K(\h n)}\geq 2/|I|^2$ for all $\h n\in I$
and all working interval $I$).
Thus, the cost for $\h K$ is bounded by  two times that of cost of $K$.

\textbf{Subcase 2:} Working achievement on $I$ is not sabotaged.
The cost for $K$ satisfies:
$$2^{-K[t](\tau)}\geq  2^{-2\max I}.$$
The cost for $\h K$ is (similarly as (\ref{numKeq71})) bounded by:
$$\sum_{J>|\tau|}2^{-K[t](m_J)-b_J}<2^{-2\max I}.$$
The last inequality is also due to $J$ sufficiently long
(so $|J|$ could be sufficiently large depending on $\min J $)

\noindent\textbf{Cost of repair for \typetauabove\   event:}
Let $\tau $ be the witness and working interval $I\ni |\tau|$
(suppose $I$ is assigned to $b,\bm\rho,n$).
Firstly re-arrange the affected $\sigma_{I,i}$ (as the previous case
and note that there is at most one $\sigma_{I,i}$
affected by $\tau$).

The cost for $K$ (to make this event) is:
 $$2^{-K[t](\tau)}\geq 2^{-K[t](|\tau|)-b}\geq 2^{-K[t](m_I)-b}.$$
The cost for $\h K$ is (by \Cref{numKclaim4} only working achievement of $I$ is sabotaged in
the \typetauabove\ way):
$$
2^{-K[t](m_I)-b}
$$
Thus, the cost for $\h K$ is bounded by    that of cost of $K$.

We have shown that in each case the cost of player $\h K$
is bounded by at most two times that of $K$.
Thus the cost of $\h K$ is bounded by, up to an absolute multiplicative constant,
that of $K$. 
Therefore $$K(\sigma)\leq \h K(\sigma)+O_U(1).$$
Thus the conclusion follows by working achievement
(\ref{numKeqtargetV2})
of $\h K$.

\end{proof}

%% file: acknowledgement.tex
\section*{Acknowledgement}
\noindent\textbf{Funding.} Lu Liu is supported by  National Natural Science Foundation of China grant 12471002.

\noindent\textbf{Use of AI.} This research is aided by AI.
The general approach for   \Cref{numKpropfinitenoniosolution},
\Cref{numKpropintersectionoftwo} 
(but not the two propositions themselves) in Section \ref{numKsecsmall},
namely  \Cref{numKlem5},  \Cref{numKlem6}  
(and concepts such as conditions and common solution) are proposed (and proved) by authors
before any use of AI.
We intended to use this approach to study if there is a non i.o. $K$-trivial
so that the $\leq_K$-cone below it is countable. During this trial,
author guessed, without prove, technical 
 \Cref{numKsimultaneousdrop}, which is later proved by AI. 
All other technical lemmas  are proposed and proved by AI
when author raised related technical questions.
 
 \Cref{numKpropslowgrowth}  is discovered by author during conversation with AI
when observing a technical result proposed and proved by AI.
Other main results are proposed by AI.

Author read and checked all results and proofs provided by AI and take full responsibility.
 We also used AI for checking proofs,  literature search
 and polishing.